\theoremstyle{plain}
\newtheorem{theorem}{Theorem}[section]
\newtheorem{cor}[theorem]{Corollary}
\newtheorem{prop}[theorem]{Proposition}
\newtheorem{lemma}[theorem]{Lemma}
\theoremstyle{definition}
\newtheorem{example}[theorem]{Example}
\newtheorem{remark}[theorem]{Remark}
\newtheorem{definition}[theorem]{Definition}
\newcommand{\R}{\mathbb{R}}
\newcommand{\N}{\mathbb{N}}
\newcommand{\Lin}{\mathcal{L}}
\newcommand{\K}{\mathcal{K}}
\newcommand{\F}{\mathcal{F}}
\newcommand{\eps}{\varepsilon}
\DeclareMathOperator{\supp}{supp}
\DeclareMathOperator{\dist}{dist}
\DeclareMathOperator{\spann}{span}
\DeclareMathOperator{\diam}{diam}
\DeclareMathOperator{\co}{co}
\DeclareMathOperator{\AHSP}{AHSP}
\DeclareMathOperator{\BPBp}{BPBp}
\DeclareMathOperator{\LDBPBp}{LDirA-BPBp}
\DeclareMathOperator{\NA}{NA}
\DeclareMathOperator{\Id}{Id}
\DeclareMathOperator{\SA}{SNA}
\DeclareMathOperator{\D}{Der}
\DeclareMathOperator{\LDA}{LDirA}
\DeclareMathOperator{\DA}{DirA}
\DeclareMathOperator{\A}{A}
\DeclareMathOperator{\SAk}{SNA_\K}
\DeclareMathOperator{\Dk}{Der_\K}
\DeclareMathOperator{\LDAk}{LDirA_\K}
\DeclareMathOperator{\DAk}{DirA_\K}
\DeclareMathOperator{\Ak}{A_\K}
\newcommand{\Mol}{\operatorname{Mol}}
\newcommand{\Lip}{{\mathrm{Lip}}_0}
\newcommand{\Lipk}{{\mathrm{Lip}}_{0\K}}
\renewcommand{\leq}{\leqslant}
\renewcommand{\geq}{\geqslant}
\renewcommand{\subset}{\subseteq}
\title{Emerging notions of norm attainment for Lipschitz maps between Banach spaces}
\author[G.~Choi]{Geunsu Choi}
\address[G.~Choi]{Departament of Mathematics, POSTECH, Pohang 790-784, Republic of Korea}
\email{\texttt{chlrmstn90@postech.ac.kr}}
\author[Y.~S.~Choi]{Yun Sung Choi}
\address[Y.~S.~Choi]{Department of Mathematics, POSTECH, Pohang 790-784, Republic of Korea}
\email{\texttt{mathchoi@postech.ac.kr}}
\author[Mart\'{\i}n]{Miguel Mart\'{\i}n}
\address[Mart\'{\i}n]{Departamento de An\'{a}lisis Matem\'{a}tico, Facultad de
	Ciencias, Universidad de Granada, 18071 Granada, Spain}
\email{\texttt{mmartins@ugr.es}}
\thanks{The research of G.~Choi was supported by NRF-2015R1D1A1A09059788 and by a travel grant of the Institute of Mathematics (IEMath-GRr) of the University of Granada, Spain. The research of Y.~S.~Choi was supported by Basic Science Research Program through the National Research Foundation of Korea (NRF) funded by the Ministry of Education (NRF-2015R1D1A1A09059788 and NRF-2019R1A2C1003857). The research of M.~Mart\'{\i}n was supported by projects MTM2015-65020-P (MINECO/FEDER, UE),  PGC2018-093794-B-I00 (MCIU/AEI/FEDER, UE), and FQM-185 (Junta de Andaluc\'{\i}a/FEDER, UE)}
\subjclass[2010]{Primary: 46B04;  Secondary: 26A16, 46B20, 46B25}
\date{July 3rd, 2019}
\keywords{Banach space, Norm attainment, Lipschitz map, Lipschitz function, uniformly convex Banach space}
\begin{document}
	
\begin{abstract}
We classify several notions of norm attaining Lipschitz maps which were introduced previously, and present the relations among them in order to verify proper inclusions. We also analyze some results for the sets of Lipschitz maps satisfying each of these properties to be dense or not in $\Lip(X,Y)$. For instance, we characterize a Banach space $Y$ with the Radon-Nikod\'ym property in terms of the denseness of norm attaining Lipschitz maps with values in $Y$. Further, we introduce a property called the local directional Bishop-Phelps-Bollob\'as property for Lipschitz compact maps, which extends the one studied previously for scalar-valued functions, and provide some new positive results.
\end{abstract}

\maketitle
\section{Introduction \& Preliminaries}

It has been studied for a long time the question of whether the set $\NA(X,Y)$ of norm attaining bounded linear operators between two Banach spaces $X$ and $Y$ is dense or not in the space $\Lin(X,Y)$ of all bounded linear operators from $X$ into $Y$. As further studies were proceeded, some mathematicians got interested in asking this kind of question for the case of Lipschitz maps as well. To discuss the possibilities of norm attaining Lipschitz maps to be dense in the space of Lipschitz maps, we shall give some preliminary background information about them.

Assume that $X$ and $Y$ are \textbf{real} Banach spaces and write $\widetilde{X} = \{ (x,y) \in X^2\colon  x \neq y \}$. We denote by $\Lip(X,Y)$ the Banach space of all Lipschitz maps $f\colon   X \longrightarrow Y$ with $f(0)=0$, endowed with the norm
$$
\| f \| = \sup\left\{\frac{\| f(x) - f(y) \|}{\| x-y \|}\colon  (x,y) \in \widetilde{X}\right\}.
$$

Looking at this norm of $\Lip(X,Y)$, the most natural way for a Lipschitz map to attain its norm would be the following one \cite{Gode1,Gode2,KadMarSol}.

\begin{definition}
We say that $f \in \Lip(X,Y)$ \emph{strongly attains its norm at} $(x,y) \in \widetilde{X}$ if
$$
\frac{\| f(x)-f(y)\| }{\| x-y \|} = \| f \|.
$$
We denote by $\SA(X,Y)$ the set of all $f \in \Lip(X,Y)$ strongly attaining its norm.
\end{definition}

However, concerning the problem of the denseness of norm attaining Lipschitz maps, it is impossible to proceed further with this definition. In fact, $\SA(X,\R)$ fails to be dense in $\Lip(X,\R)$ for every Banach space $X$ (see \cite[Theorem 2.3]{KadMarSol}) and, therefore, $\SA(X,Y)$ cannot be dense in $\Lip(X,Y)$ for any Banach space $Y$ by \cite[Proposition 4.2]{ChiclanaMartin}. We refer the interested reader to the recent papers \cite{CasChiGLMarRueda,ChiclanaMartin} for the study of the denseness of strongly norm attaining Lipschitz maps defined in general metric spaces.

Recently, a few papers dealing with alternative types of norm attainment for Lipschitz maps defined on Banach spaces have appeared. Kadets, Mart\'in and Soloviova \cite[Definition 4.2]{KadMarSol} introduced another possible definition called (locally) directionally norm attaining Lipschitz function. On the other hand, Godefroy \cite{Gode2} defined other two ways in which a Lipschitz map can attain its norm. We also refer to section 8.8 of the very recent book \cite{CobzasMiculuscuNicolae-LNM} for an exposition of the results of the two aforementioned papers \cite{Gode2,KadMarSol}. Our first aim in this paper is to introduce some variations of these definitions of norm attainment and study the possible denseness of the set of Lipschitz maps attaining each of such norms. We first provide with the definitions used throughout the paper.  Definitions \ref{def:godefroy1} and \ref{def:godefroy2} were first introduced in \cite{Gode2}, and Definitions \ref{def:kms2} and \ref{def:kms1}  were first considered in \cite{KadMarSol} only for Lipschitz (real-valued) functions, which are easily extensible to the general (vector-valued) Lipschitz maps. We will use the usual notation of $B_X$, $S_X$, $X^*$ for the closed unit ball, unit sphere, and topological dual, respectively, of a Banach space $X$.

\begin{definition}[\textrm{\cite{Gode2}}]\label{def:godefroy1}
We say that $f \in \Lip(X,Y)$ \emph{attains its norm at} $x \in X$ \emph{through a derivative in the direction} $e \in S_X$ if
$$
f'(x,e) = \lim_{t \rightarrow 0} \frac{f(x+te)-f(x)}{t}\in Y \  \text{exists and satisfies that }  \| f'(x,e) \| = \| f \|.
$$
We denote by $\D(X,Y)$ the set of every $f \in \Lip(X,Y)$ which attains its norm at $x$ through a derivative in the direction $e$ for some point $x \in X$ and $e \in S_X$.
\end{definition}

Let us comment that the argument of maximal norm for a directional derivative is used in the fundamental article of Preiss \cite{Preiss-JFA} to get the existence of Fr\'{e}chet-smooth points for a Lipschitz function defined on a space with separable dual. More concretely, Preiss provides stronger versions of the result in \cite{Fitzpatrick} which showed that a Lipschitz function $f$ defined on a Banach space $X$ is Fr\'{e}chet differentiable at the point $x\in X$, if there is $e\in S_X$ such that $|f'(x,e)|=\|f\|$ (actually much less) and if the norm of $X$ is Fr\'{e}chet differentiable at $e$.

\begin{definition}[\textrm{\cite{Gode2}}]\label{def:godefroy2}
We say that $f \in \Lip(X,Y)$ \emph{attains its norm toward} $z \in Y$ if there exists $\{ (x_n,y_n) \}_{n=1}^{\infty} \subset \widetilde{X}$ such that
$$
\frac{f(x_n)-f(y_n)}{\| x_n-y_n \|} \longrightarrow z \quad \text{with } \| z \| = \| f \|.
$$
We denote by $\A(X,Y)$ the set of every $f \in \Lip(X,Y)$ which attains its norm toward $z$ for some $z \in Y$.
\end{definition}

We will see in Proposition~\ref{prop:inclusion} that this is the weakest condition among all those that we are defining here. Note also that it is proved in \cite{Gode2} (but not explicitly stated, see Example~\ref{example:Godefroy-negative_A}) that there are pairs of Banach spaces $(X,Y)$ such that $\A(X,Y)$ is not dense in $\Lip(X,Y)$. For some related results with coarse Lipschitz maps, we refer to \cite{DalLan}.

The next definitions, which extends those given in \cite{KadMarSol} for Lipschitz functions, lay in between the previously introduced notions of norm attainment.

\begin{definition}[\textrm{\cite{KadMarSol}}]\label{def:kms2}
We say that $f \in \Lip(X,Y)$ \emph{attains its norm directionally in the direction} $u \in S_X$ \emph{toward} $z \in Y$ if there exists $\{ (x_n,y_n) \}_{n=1}^{\infty} \subset \widetilde{X}$ such that
$$
\frac{f(x_n)-f(y_n)}{\| x_n-y_n \|} \longrightarrow z \quad \text{with } \| z \| = \| f \|, \quad \frac{x_n-y_n}{\| x_n-y_n \|} \longrightarrow u.
$$
We denote by $\DA(X,Y)$ the set of every $f \in \Lip(X,Y)$ which attains its norm directionally in the direction $u$ toward $z$ for some $u \in S_X$ and $z \in Y$. \end{definition}

\begin{definition}[\textrm{\cite{KadMarSol}}]\label{def:kms1}
We say that $f \in \Lip(X,Y)$ \emph{attains its norm locally directionally at the point} $\bar{x} \in X$ \emph{in the direction} $u \in S_X$ \emph{toward} $z \in Y$ if there exists $\{ (x_n,y_n) \}_{n=1}^{\infty} \subset \widetilde{X}$ such that
$$
\frac{f(x_n)-f(y_n)}{\| x_n-y_n \|} \longrightarrow z \quad \text{with } \| z \| = \| f \|, \quad \frac{x_n-y_n}{\| x_n-y_n \|} \longrightarrow u \quad \text{and} \quad x_n,y_n \longrightarrow \bar{x}.
$$
We denote by $\LDA(X,Y)$ the set of every $f \in \Lip(X,Y)$ which attains its norm locally directionally at the point $\bar{x}$ in the direction $u$ toward $z$ for some $\bar{x} \in X$, $u \in S_X$, and $z \in Y$.
\end{definition}

As a consequence of \cite[Theorem 5.3]{KadMarSol}, one obtains that $\LDA(X,\R)$ is dense in $\Lip(X,\R)$ whenever $X$ is a uniformly convex Banach space. Recall that a Banach space $X$ is said to be \emph{uniformly convex} if for every $\eps>0$, there is $\delta>0$ such that for any $x,y\in B_X$ the condition $\|x-y\|\geq \eps$ implies that $\left\|\frac{x+y}{2}\right\|\leq 1-\delta$. The best possible value of $\delta$ is denoted by $\delta_X(\eps)$ and called the modulus of convexity of $X$. As far as we know, the cited consequence of \cite[Theorem 5.3]{KadMarSol} is the only known positive result on the denseness of different kind of norm attaintment for Lipschitz maps defined on a Banach space.

To get shaper results, we also deal in this paper with Lipschitz compact maps. We say $f \in \Lip(X,Y)$ is a \emph{Lipschitz compact map} if the set
$$
\operatorname{Slope}(f):=\left\{ \frac{f(x)-f(y)}{\| x-y \|} \colon  (x,y) \in \widetilde{X} \right\} \subset Y
$$
(which is called the \emph{set of slopes} or the \emph{Lipschitz image} of $f$) is relatively compact in $Y$, and denote by $\Lipk(X,Y)$ the space of all Lipschitz compact maps from $X$ into $Y$. Observe that if $Y$ is finite-dimensional then every Lipschitz map is indeed a Lipschitz compact map, whereas we cannot say that when $X$ is finite-dimensional. We refer to \cite[\S 8.6]{CobzasMiculuscuNicolae-LNM} and \cite{JimenezSepulcreVillegas} for background.
Now we apply the five definitions of norm attainment to the set of Lipschitz compact maps to get the corresponding norm attaining sets: given Banach spaces $X$, $Y$, we write
\begin{align*}
\SAk(X,Y)&:=\SA(X,Y)\cap \Lipk(X,Y), & \Dk(X,Y)&:=\D(X,Y)\cap \Lipk(X,Y),\\
\LDAk(X,Y)&:=\LDA(X,Y)\cap \Lipk(X,Y), & \DAk(X,Y)&:=\DA(X,Y)\cap \Lipk(X,Y),\\
\Ak(X,Y)&:=\A(X,Y)\cap \Lipk(X,Y).
\end{align*}

Let us comment now what happens with all the introduced definitions when the Lipschitz map is actually a linear operator. Given Banach spaces $X$ and $Y$, we denote by $\Lin(X,Y)$ the space of all bounded linear operator from $X$ into $Y$, endowed with the operator norm. It is clear that $\Lin(X,Y)\subseteq \Lip(X,Y)$ with equality of the norms. Recall that $T\in \Lin(X,Y)$ \emph{attain its norm} (as a linear operator) at $x_0 \in S_X$ if
$$
\| T \| = \sup_{x \in B_X} \| Tx \| = \| Tx_0 \|,
$$
and $\NA(X,Y)$ denotes the set of all $T \in \Lin(X,Y)$ which attains its norm. We summarize in the next result the relations between the different notions of norm attainment that we have introduced when they are applied to bounded linear operators. We denote by $\mathcal{K}(X,Y)$ the space of all compact linear operators from $X$ into $Y$.

\begin{remark}\label{remark-NA-linear}
{\slshape Let $X$, $Y$ be Banach spaces.}
\begin{itemize}
\item[(a)]
$\SA(X,Y)\cap \Lin(X,Y)  = \D(X,Y)\cap \Lin(X,Y)    = \LDA(X,Y)\cap \Lin(X,Y) = \DA(X,Y)\cap \Lin(X,Y)$
{\slshape and these sets coincide with $\NA(X,Y)$.}
\item[(b)] $\mathcal{K}(X,Y)\subset \A(X,Y)$.
\end{itemize}
In fact, (a) follows immediately from the definitions, contininuity and linearity of the elements of $\Lin(X,Y)$. To get (b), fix $T\in \mathcal{K}(X,Y)$ and take $z\in \overline{T(S_X)}$ with $\|z\|=\|T\|$, which is possible due to the compactness of $T$. Now, we may consider a sequence $\{x_n\}$ in $S_X$ such that $T(x_n)\longrightarrow  z$ and then the linearity of $T$ gives that $T\in \A(X,Y)$.
\end{remark}

So far we have introduced five definitions of norm attainment for Lipschitz maps. Our aim in Section~\ref{chapter:inclusion} is to show the inclusion relations between the sets of norm attainment. We first show that for arbitrary Banach spaces $X$ and $Y$, they partially form a chain of subsets: $$\D(X,Y) \subset \LDA(X,Y) \subset \DA(X,Y) \subset \A(X,Y) \subset \Lip(X,Y)$$ and that $$\SA(X,Y) \subset \DA(X,Y).$$  When $Y$ has the Radon-Nikod\'{y}m property, we show that
$$
\SA(X,Y)\subset \D(X,Y)
$$
and that this inclusion is not true in general. We show examples that all inclusions can be proper, and characterize when the equalities hold, getting some characterizations of finite dimensionality. For Lipschitz compact maps, the situation is easier, as we will see that
$$
\SAk(X,Y)\subset \Dk(X,Y) \subset \LDAk(X,Y) \subset \DAk(X,Y) \subset \Ak(X,Y)=\Lipk(X,Y),
$$
and also that each inclusion can be proper. We analyze the cases where the equalities occur, getting some more characterizations of finite dimensionality.

In Section~\ref{chapter:positiveldak}, we deal with the problem of determining when the different sets of norm attaining Lipschitz maps are dense. We show that $\D(\R,Y)$ is dense in $\Lip(\R,Y)$ if and only if  $Y$ has the Radon-Nikod\'{y}m property. Moreover, if $\D(X,Y)$ is dense in $\Lip(X,Y)$ for some $X$, then $Y$ must have the Radon-Nikod\'{y}m property. On the other hand, it is also shown that $\Dk(\R,Y)$ is dense in $\Lipk(\R,Y)$ for all Banach spaces $Y$. Besides, we provide some sufficient conditions to get that $\A(X,Y)$ is dense in $\Lip(X,Y)$.

In order to discuss the content of Section~\ref{chapter:dlbpbp}, we need some notions given in \cite{AcoAroGarMae}. Acosta, Aron, Garc\'ia and Maestre introduced the \emph{Bishop-Phelps-Bollob\'as property} ($\BPBp$ for short) \emph{for} (\emph{linear}) \emph{operators}, a name given to those pairs of Banach spaces $(X,Y)$ satisfying the following: for every $\eps>0$, there exists $\eta>0$ such that whenever $T \in S_{\Lin(X,Y)}$ and $x \in S_X$ satisfy $\|Tx\|>1-\eta$, there exist $S \in S_{\Lin(X,Y)}$ and $y \in S_X$ such that $\|Sy\|=1$, $\|S-T\|<\eps$ and $\|y-x\|<\eps$. If $T$ and $S$ above are compact, we get the analogous definition of $\BPBp$ \emph{for compact operators}.

Banach spaces with some geometrical properties play an important role as a range space in the viewpoint of $\BPBp$ for operators. A Banach space $X$ is said to have \emph{property $\beta$}, which was first introduced by Lindenstrauss in \cite{Lin}, if there exist a collection $\{(z_i, z_i^*)\}_{i\in I} \subset S_X \times S_{X^*}$ and a constant $0 \leq \lambda <1$ satisfying (1) $|z_i^*(z_i)| = 1$ for all $i \in I$, (2) $|z_i^*(z_j)| \leq \lambda < 1$ if $i \neq j$, and (3) $\|z\| = \sup_{i \in I} |z_i^*(z)|$ for any $z \in X$. For instance, finite-dimensional spaces with polyhedral unit balls, $c_0$ and $\ell_\infty$ have property $\beta$. When $Y$ has property $\beta$, $(X,Y)$ has the $\BPBp$ for operators and the $\BPBp$ for compact operators for arbitrary domain space $X$ (see \cite[Theorem~2.2]{AcoAroGarMae} and \cite[Example 1.5]{DanGarMaeMar}).

Our aim in Section~\ref{chapter:dlbpbp} is to extend results in \cite{KadMarSol} about some version of the Bishop-Phelps-Bollob\'{a}s property for scalar-valued Lipschitz functions to vector-valued cases. Let us present the main definition which extends \cite[Definition 4.3]{KadMarSol} to vector-valued maps. Note that $[x,y]$ denotes the segment joining $x$ and $y$.

\begin{definition}\label{def:sldBPBp}
A pair of Banach spaces $(X,Y)$ is said to have the \emph{local directional Bishop-Phelps-Bollob\'as property} (in short, $\LDBPBp$) \emph{for Lipschitz maps} if for every $\eps>0$, there exists $\eta >0$ such that whenever $f \in S_{\Lip(X,Y)}$  and $(x,y) \in \widetilde{X}$ satisfy
$$
\frac{\| f(x)-f(y) \|}{\| x-y \| } > 1 - \eta,
$$
there exist $g \in S_{\Lip(X,Y)}$, $z \in S_Y$, $u \in S_X$ and $\bar{x} \in X$ such that $g$ attains its norm locally directionally at the point $\bar{x}$ in the direction $u$ toward $z$, $\| g -f \| < \eps$, $\bigl\| u - \frac{x-y}{\| x - y \|} \bigr\| < \eps$ and $\dist (\bar{x}, [x,y] ) < \eps$.

If $f$ and $g$ above are Lipschitz compact, we get the analogous definition of the $\LDBPBp$ \emph{for Lipschitz compact maps}.
\end{definition}

Observe that if a pair of Banach spaces $(X,Y)$ has the $\LDBPBp$ for Lipschitz maps, then $\LDA(X,Y)$ is dense in $\Lip(X,Y)$. Analogously, if $(X,Y)$ has the $\LDBPBp$ for Lipschitz compact maps, then $\LDAk(X,Y)$ is dense in $\Lipk(X,Y)$.

If $X$ is a uniformly convex Banach space, it is shown in \cite[Theorem 5.3]{KadMarSol} that the pair $(X,\R)$ has the $\LDBPBp$ for Lipschitz maps. We will show in Section~\ref{chapter:dlbpbp} that if $X$ is a uniformly convex Banach space, $Y$ is a Banach space, and the pair $(\F(X),Y)$ has the $\BPBp$ for compact operators, then the pair $(X,Y)$ has the $\LDBPBp$ for Lipschitz compact maps. In particular, this applies for all uniformly convex spaces $X$, if $Y$ has property $\beta$, or if $Y^*$ is isometrically isomorphic to some $L_1(\mu)$-space like $Y=C_0(L)$, where $L$ is a locally compact Hausdorff space. In the case where $X$ is a Hilbert space $H$, we also get a slightly different property for the pair $(H,Y)$ under the same assumptions on the space $Y$.

The techniques which will be used to get results for the $\LDBPBp$ for Lipschitz compact maps require the notion of the so-called Lipschitz-free spaces. The rest of this introduction is devoted to present the necessary background. For a Banach space $X$, we can associate to each $x \in X$ an element $\delta_{x} \in \Lip(X,\R)^{*}$, which is just the evaluation map $\delta_{x}(f) = f(x)$ for every $f\in \Lip(X,\R)$. The \emph{Lipschitz-free space} over $X$ is defined as
$$
\F(X):= \overline{\spann}^{\| \cdot \|}{\{ \delta_{x}\colon  x \in X\}} \subseteq \Lip(X, \R)^{ *}.
$$
Note that the map $x \longmapsto \delta_{x}$ establishes an isometric (non-linear) embedding $X \hookrightarrow \F(X)$, because $\|\delta_{x}-\delta_{y}\|=\|x-y\|$ for all $x,y\in X$. We refer the reader to the paper \cite{Gode1}  and the books \cite{CobzasMiculuscuNicolae-LNM} and \cite{Weaver} for more information and background. The main features of the Lipschitz-free space that we are going to use here are contained in the following result, which is nowadays considered folklore.

\begin{lemma}\label{Lemm:elementarypropertiesF(X)}
Let $X$, $Y$ be Banach spaces.
\begin{enumerate}
\item[\textup{(a)}] For every $f\in \Lip(X,Y)$ there exists a unique linear operator $T_f \in \Lin(\F(X),Y)$ such that $T_{f}\circ \delta = f$ with $\|T_{f}\| = \|f\|$. Moreover, this correspondence defines an isometric isomorphism between the space $\Lip(X,Y)$ and $\Lin(\F(X),Y)$. In particular, $\F(X)^* = \Lip(X,\R)$.
\item[\textup{(b)}] $f \in \Lipk(X,Y)$ if and only if $T_f \in \K(\F(X),Y)$.
\item[\textup{(c)}] The set
\begin{equation*}
 \Mol(X) := \left\{ \frac{\delta_{x} - \delta_{y}}{\| x - y\|}\colon  (x,y) \in \widetilde{X} \right\}\subseteq \F(X)
\end{equation*}
is rounded and norming for $\F(X)^*$, i.e.\ $B_{\F(X)} = \overline{\co}{(\Mol(X))}$, where $\overline{\co}(\Mol(X))$ denotes the closed convex hull of $\Mol(X)$.
\item[\textup{(d)}] $\F(\R)$ is isometrically isomorphic to $L_1(\R)$ through the map $\delta_t \longmapsto \chi_{[0,t]}$ or, equivalently, $\Lip(\R,\R) = L_\infty(\R)$ through the differentiation map.
\end{enumerate}
\end{lemma}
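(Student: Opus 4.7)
The overall plan is to prove (a) first, then derive (c) from it via Hahn--Banach, (b) from (c) and Mazur's theorem, and finally (d) via duality with differentiation. For (a), I would define $T_f$ on $\spann\{\delta_x\colon x\in X\}$ by $T_f\bigl(\sum_i a_i \delta_{x_i}\bigr):=\sum_i a_i f(x_i)$. Well-definedness requires that $\sum_i a_i \delta_{x_i}=0$ in $\F(X)\subseteq\Lip(X,\R)^*$ implies $\sum_i a_i f(x_i)=0$ in $Y$: for each $\psi\in Y^*$ the composition $\psi\circ f$ lies in $\Lip(X,\R)$, so the hypothesis gives $\psi\bigl(\sum_i a_i f(x_i)\bigr)=\sum_i a_i(\psi\circ f)(x_i)=0$, and Hahn--Banach concludes. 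For boundedness, using the isometric embedding $x\mapsto\delta_x$ (an immediate consequence of McShane's extension for Lipschitz functions on finite subsets of $X$), I would compute, for any $\eta\in\spann\{\delta_x\}$,
$$
\|T_f(\eta)\|_Y=\sup_{\psi\in B_{Y^*}}|\psi(T_f(\eta))|=\sup_{\psi\in B_{Y^*}}|\langle\psi\circ f,\eta\rangle|\le\|f\|\cdot\|\eta\|_{\F(X)},
$$
since $\|\psi\circ f\|_{\Lip}\le\|f\|$. The reverse $\|T_f\|\ge\|f\|$ is immediate by evaluating on molecules $(\delta_x-\delta_y)/\|x-y\|\in B_{\F(X)}$. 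The identity $T_f\circ\delta=f$ is built in, and the particular case $Y=\R$ yields the identification $\F(X)^*=\Lip(X,\R)$.

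For (c), roundedness is trivial since $-(\delta_x-\delta_y)/\|x-y\|=(\delta_y-\delta_x)/\|y-x\|\in\Mol(X)$, and the inclusion $\overline{\co}(\Mol(X))\subseteq B_{\F(X)}$ follows from $\|\delta_x-\delta_y\|_{\F(X)}\le\|x-y\|$ observed already in (a). For the reverse inclusion, I would argue by contradiction via geometric Hahn--Banach: if some $\mu\in B_{\F(X)}\setminus\overline{\co}(\Mol(X))$ existed, then there would be $g\in\F(X)^*=\Lip(X,\R)$ with $\langle g,\mu\rangle>\sup_{m\in\Mol(X)}\langle g,m\rangle$; but the supremum on the right equals $\|g\|_{\Lip(X,\R)}$ by the definition of the Lipschitz norm together with the roundedness just established, yielding $\langle g,\mu\rangle>\|g\|\cdot\|\mu\|$, a contradiction.

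For (b), one direction is immediate from $\operatorname{Slope}(f)=T_f(\Mol(X))\subseteq T_f(B_{\F(X)})$: if $T_f$ is compact, so is $\operatorname{Slope}(f)$. Conversely, if $\operatorname{Slope}(f)$ is relatively compact then so is $\overline{\co}(\operatorname{Slope}(f))$ by Mazur's theorem; combining continuity of $T_f$ with (c) yields
$$
T_f(B_{\F(X)})=T_f\bigl(\overline{\co}(\Mol(X))\bigr)\subseteq\overline{\co}(\operatorname{Slope}(f)),
$$
which is relatively compact, so $T_f\in\K(\F(X),Y)$.

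Finally, for (d), I would first identify $\Lip(\R,\R)$ with $L_\infty(\R)$ via differentiation (any Lipschitz $f\colon\R\to\R$ is absolutely continuous with $f'\in L_\infty(\R)$ and $\|f'\|_\infty=\|f\|$; conversely $g\mapsto(t\mapsto\int_0^t g)$ inverts this isometrically), and then define $\Phi\colon L_1(\R)\to\Lip(\R,\R)^*$ by $\langle\Phi(g),f\rangle:=\int_\R g(s)f'(s)\,ds$. This $\Phi$ is an isometric embedding, being the restriction to $L_1(\R)\subset L_\infty(\R)^*$ of the adjoint of the differentiation isomorphism, and a direct check gives $\langle\Phi(\chi_{[0,t]}),f\rangle=\int_0^t f'=f(t)=\langle\delta_t,f\rangle$ (using $f(0)=0$); hence $\Phi(\chi_{[0,t]})=\delta_t$. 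Since step functions are dense in $L_1(\R)$ and finite combinations of the $\delta_t$ are dense in $\F(\R)$, $\Phi$ is an isometric isomorphism $L_1(\R)\to\F(\R)$. The genuinely delicate point is (d), which relies on the characterization of Lipschitz functions on $\R$ as absolutely continuous with essentially bounded derivative; elsewhere the proof is a careful combination of McShane extension, Hahn--Banach, and Mazur's theorem with the universal property of $\F(X)$.
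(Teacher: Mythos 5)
Your proposal is correct, but there is a wrinkle in the comparison: the paper offers \emph{no} proof of this lemma at all --- it is explicitly presented as folklore, with pointers to Godefroy's survey and the books of Cobzas--Miculescu--Nicolae and Weaver --- so there is no in-paper argument to diverge from. What you have written is a faithful and complete rendering of the standard folklore proofs in those references: well-definedness of $T_f$ on $\spann\{\delta_x\colon x\in X\}$ via scalarization and Hahn--Banach, the norm identity $\|T_f\|=\|f\|$ by testing against $\psi\circ f$ in one direction and against molecules in the other, separation (geometric Hahn--Banach) for $B_{\F(X)}=\overline{\co}(\Mol(X))$, Mazur's theorem for the compactness transfer in (b), and the differentiation duality $\Lip(\R,\R)=L_\infty(\R)$ with $\Phi=D^*|_{L_1}$ for (d); the density argument closing (d) (step functions dense in $L_1(\R)$, the range of an isometry from a complete space is closed) is exactly right.

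Two points you leave implicit are worth making explicit, since (c) quietly depends on them. First, $T_f$ is defined only on $\spann\{\delta_x\}$ and must be extended by uniform continuity to its closure $\F(X)$, with uniqueness following from the same density. Second, you never verify surjectivity of $f\longmapsto T_f$, yet your separation argument in (c) uses it in the case $Y=\R$ when it identifies the separating functional with some $g\in\Lip(X,\R)$ whose $\F(X)^*$-norm equals its Lipschitz norm: given $\phi\in\Lin(\F(X),Y)$, set $f:=\phi\circ\delta$, note $f(0)=\phi(\delta_0)=0$ because $\delta_0=0$ in $\Lip(X,\R)^*$, and check $T_f=\phi$ on the span, hence everywhere. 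Both are one-line repairs, not gaps in the method. A cosmetic remark: McShane extension is more than you need for $\|\delta_x-\delta_y\|=\|x-y\|$; the single function $g(\cdot)=\|\cdot-y\|-\|y\|$ already witnesses the lower bound. Finally, in (d) the symbol $\chi_{[0,t]}$ for $t<0$ should be read as $-\chi_{[t,0]}$, which your signed-integral computation $\int_0^t f'=f(t)$ already handles correctly.
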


\section{Relations among the different notions of norm attaintment} \label{chapter:inclusion}

We begin this section with presenting the inclusion relations among the different kinds of sets of norm attaining Lipschitz maps which we have presented in the introduction.

\begin{prop}\label{prop:inclusion}
Let $X$ and $Y$ be Banach spaces.
\begin{itemize}
\item[\textup{(a)}] $\D(X,Y) \subset \LDA(X,Y) \subset \DA(X,Y) \subset \A(X,Y) \subset \Lip(X,Y)$.
\item[\textup{(b)}] $\SA(X,Y) \subset \DA(X,Y)$.
\item[\textup{(c)}] If $\dim(X)<\infty$, then $\DA(X,Y)=\A(X,Y)$.
\item[\textup{(d)}] If $\dim(Y)<\infty$, then  $\A(X,Y) = \Lip(X,Y)$.
\item[\textup{(e)}] If $Y$ has the Radon-Nikod\'{y}m property, then $\SA(X,Y)\subset \D(X,Y)$.
\end{itemize}
\end{prop}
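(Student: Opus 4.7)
The inclusions in (a) beyond $\D\subset\LDA$, together with (b), come straight from unpacking definitions: each successive set in the chain $\LDA\subset\DA\subset\A\subset\Lip$ is obtained from the previous one by dropping a requirement (first the localization $x_n,y_n\to\bar x$, then the convergence of normalized differences to some $u$, then the norm-equality of the slope limit), so these inclusions are immediate. For (b), given $f\in\SA(X,Y)$ strongly attaining at $(x,y)$, the constant sequence $(x_n,y_n)=(x,y)$ together with $u=(x-y)/\|x-y\|$ and $z=(f(x)-f(y))/\|x-y\|$ trivially witnesses $f\in\DA(X,Y)$.

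For the one remaining inclusion $\D(X,Y)\subset\LDA(X,Y)$ in (a), the plan is to take $f\in\D(X,Y)$ attaining its norm at $x$ through the directional derivative $f'(x,e)$ with $\|f'(x,e)\|=\|f\|$, and to set $x_n:=x+\frac{1}{n}e$, $y_n:=x$. Then $(f(x_n)-f(y_n))/\|x_n-y_n\|\to f'(x,e)$, the normalized differences are identically $e$, and $x_n,y_n\to x$, so $f$ attains its norm locally directionally at $\bar x=x$ in the direction $e$ toward $f'(x,e)$.

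Parts (c) and (d) will be handled by compactness. For (c), starting from $f\in\A(X,Y)$ with witness $\{(x_n,y_n)\}$, compactness of $S_X$ (in finite dimension) allows extracting a subsequence along which $(x_n-y_n)/\|x_n-y_n\|$ converges to some $u\in S_X$ while the slopes still tend to the same $z$, so $f\in\DA(X,Y)$. For (d), given $f\in\Lip(X,Y)$ arbitrary, I would pick $(x_n,y_n)\in\widetilde X$ with $\|f(x_n)-f(y_n)\|/\|x_n-y_n\|\to\|f\|$; the slopes are bounded by $\|f\|$ in the finite-dimensional $Y$, hence admit a convergent subsequence with limit of norm $\|f\|$, giving $f\in\A(X,Y)$.

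The heart of the statement is (e). Given $f\in\SA(X,Y)$ attaining at $(x_0,y_0)$, the plan is to set $L:=\|x_0-y_0\|$, $e:=(x_0-y_0)/L$, and consider the restriction $g\colon[0,L]\to Y$ defined by $g(t):=f(y_0+te)$. This $g$ is Lipschitz with constant at most $\|f\|$ and satisfies $\|g(L)-g(0)\|=\|f(x_0)-f(y_0)\|=\|f\|\,L$. Since $Y$ has the Radon-Nikod\'ym property, $g$ is differentiable almost everywhere on $[0,L]$ and admits the vector-valued fundamental theorem of calculus $g(L)-g(0)=\int_0^L g'(t)\,dt$ as a Bochner integral. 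The sandwich
\[
\|f\|\,L=\|g(L)-g(0)\|\leq\int_0^L\|g'(t)\|\,dt\leq\|f\|\,L
\]
then forces $\|g'(t)\|=\|f\|$ for almost every $t\in[0,L]$. Picking any such $t_0$ and letting $\bar x:=y_0+t_0 e$, the one-sided directional derivative $f'(\bar x,e)$ exists and coincides with $g'(t_0)$, so $\|f'(\bar x,e)\|=\|f\|$ and $f\in\D(X,Y)$. The main technical point is to invoke the correct characterization of the RNP, namely that every Lipschitz map from a bounded interval into an RNP space is almost everywhere differentiable and is recoverable from its derivative by Bochner integration; once this is available, the argument reduces to an equality-in-the-triangle-inequality manipulation.
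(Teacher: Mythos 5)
Your proposal is correct, and parts (a)--(d) coincide with the paper's proof essentially verbatim: the same sequence $(x+\frac{1}{n}e,\,x)$ for $\D(X,Y)\subset\LDA(X,Y)$, the constant sequence for (b), and the same two compactness extractions for (c) and (d). Where you genuinely diverge is in (e). The paper first establishes an auxiliary lemma (its Lemma~\ref{lemma:SNAinLDA}): composing $f$ with a norming functional $y^*\in S_{Y^*}$ and invoking the scalar result \cite[Lemma 2.2]{KadMarSol}, it shows that strong attainment at $(x,y)$ forces $\|f(v)-f(w)\|=\|f\|\,\|v-w\|$ for \emph{every} pair $(v,w)$ in the segment $[x,y]$, and only then runs the integral sandwich $\|g\|=\frac{1}{|t|}\|g(t)-g(0)\|\leq\frac{1}{|t|}\int_0^t\|g'(s)\|\,ds\leq\|g\|$ at each $t$. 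Your endpoint-only sandwich
$$
\|f\|\,L=\|g(L)-g(0)\|\leq\int_0^L\|g'(t)\|\,dt\leq\|f\|\,L
$$
makes that lemma unnecessary: since $\|g'(t)\|\leq\|f\|$ a.e.\ (the difference quotients of $g$ are bounded by $\|f\|$), equality of the integrals already forces $\|g'(t)\|=\|f\|$ a.e.\ on $[0,L]$. This is a slightly more economical route; what the paper's detour buys is the reusable segment lemma, which it needs again elsewhere (e.g.\ in the example showing $\SA(\R,L_1(\R))\not\subset\LDA(\R,L_1(\R))$). A second harmless difference: the paper defines $g$ on all of $\R$ and invokes its Lemma~\ref{lemma:RNP-compact-derivable} (via representability of $T_g\in\Lin(L_1(\R),Y)$), whereas you use the bounded-interval RNP characterization, which is the standard form in Benyamini--Lindenstrauss and equally valid. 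One small wording correction: Definition~\ref{def:godefroy1} requires the \emph{two-sided} limit $\lim_{t\to 0}\frac{f(\bar{x}+te)-f(\bar{x})}{t}$, so your phrase ``one-sided directional derivative'' is a misnomer; but since the set $\{t\in[0,L]\colon\|g'(t)\|=\|f\|\}$ has full measure, you may pick $t_0$ in the open interval $(0,L)$, where differentiability of $g$ is two-sided and coincides with $f'(\bar{x},e)$, so the argument stands as written.
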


We need the following easy consequence of \cite[Lemma 2.2]{KadMarSol}.

\begin{lemma}\label{lemma:SNAinLDA}
Let $X$, $Y$ be Banach spaces. If $f\in \Lip(X,Y)$ strongly attains its norm at $(x,y)\in \widetilde{X}$, then
  $
  \|f(v)-f(w)\|=\|f\|\,\|v-w\|
  $
  for every $(v,w)\in \widetilde{[x,y]}$.
\end{lemma}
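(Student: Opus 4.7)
The plan is to exploit additivity of length along the segment $[x,y]$ together with the triangle inequality. Concretely, fix $(v,w)\in\widetilde{[x,y]}$; by symmetry we may assume that $v$ lies between $x$ and $w$ along the segment, and $w$ lies between $v$ and $y$, so that
\[
\|x-y\|=\|x-v\|+\|v-w\|+\|w-y\|.
\]
This identity is the crucial geometric input and it is available precisely because all four points are collinear on $[x,y]$.

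Next, I would apply the triangle inequality to the chain $x \to v \to w \to y$ in $Y$:
\[
\|f(x)-f(y)\|\leq \|f(x)-f(v)\|+\|f(v)-f(w)\|+\|f(w)-f(y)\|,
\]
and bound each summand on the right by $\|f\|$ times the corresponding distance in $X$, using the Lipschitz constant $\|f\|$:
\[
\|f(x)-f(v)\|+\|f(v)-f(w)\|+\|f(w)-f(y)\| \leq \|f\|\bigl(\|x-v\|+\|v-w\|+\|w-y\|\bigr)=\|f\|\,\|x-y\|.
\]
Since by hypothesis $\|f(x)-f(y)\|=\|f\|\,\|x-y\|$, both extremes of the chain coincide, so every intermediate inequality must be an equality. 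In particular, $\|f(v)-f(w)\|=\|f\|\,\|v-w\|$, which is the claim.

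The degenerate cases ($v=x$, $w=y$, or $v,w$ in reversed order) are handled by the same argument after relabeling or by omitting the corresponding trivial summand. There is essentially no obstacle here: the only place one must be careful is in justifying the length-additivity identity, and that is immediate since $v,w\in[x,y]$ means they can be written as convex combinations $v=(1-s)x+sy$, $w=(1-t)x+ty$ with $s,t\in[0,1]$, after which the identity reduces to $|s-t|+s+(1-t)=1$ when $s\leq t$. This is why the proof does not need anything beyond \cite[Lemma~2.2]{KadMarSol}, whose content is exactly this three-term telescoping observation.
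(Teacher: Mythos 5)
Your proposal is correct, and it takes a genuinely different route from the paper's. The paper reduces to the scalar case: it picks a norming functional $y^*\in S_{Y^*}$ for $\frac{f(x)-f(y)}{\|x-y\|}$ via Hahn--Banach, observes that $\psi=y^*\circ f$ strongly attains its norm at $(x,y)$ with $\|\psi\|=\|f\|$, invokes \cite[Lemma 2.2]{KadMarSol} for the scalar function $\psi$, and then sandwiches $|\psi(v)-\psi(w)|\leq\|f(v)-f(w)\|\leq\|f\|\,\|v-w\|$ to transfer the conclusion back to $f$. You instead prove the vector-valued statement directly: the metric additivity $\|x-y\|=\|x-v\|+\|v-w\|+\|w-y\|$ for ordered collinear points, combined with the three-term triangle inequality and the Lipschitz bound on each increment, forces termwise equality, since a sum of quantities each dominated by its counterpart can only equal the sum of the counterparts if every term matches; your handling of the ordering and the degenerate cases is fine. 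What each approach buys: yours is self-contained (no Hahn--Banach, no external lemma) and, since it uses only metric additivity, it works verbatim whenever $v,w$ lie on a metric segment between $x$ and $y$ --- indeed, this telescoping is essentially how the scalar lemma in \cite{KadMarSol} is proved in the first place. The paper's route yields slightly more than the stated conclusion: a single functional $y^*$ norms all increments $f(v)-f(w)$ along the segment (in fact $y^*\circ f$ is affine there with slope $\pm\|f\|$), a directional alignment that the bare norm identity does not record; however, the paper never uses this extra information, so your more elementary argument loses nothing for the purposes at hand. One small inaccuracy in your write-up: the closing sentence claims the proof ``does not need anything beyond'' \cite[Lemma~2.2]{KadMarSol}, but your argument never invokes that lemma at all --- you have re-proved its content directly in the vector-valued setting, which is a feature, not a defect.
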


\begin{proof}
Take $y^*\in S_{Y^*}$ such that
$$
y^*\left(\frac{f(x)-f(y)}{x-y}\right)=\left\|\frac{f(x)-f(y)}{x-y}\right\|=\|f\|.
$$
This shows that the Lipschitz function $\psi=y^*\circ f\in\Lip(X,\R)$ strongly attains its norm at the pair $(x,y)$, so by \cite[Lemma 2.2]{KadMarSol}, we have that
$$
 \frac{|\psi(v) - \psi(w)|}{\|v-w\|} =\|\psi\|=\|f\|
$$
for every $(v,w)\in \widetilde{[x,y]}$. This gives the result immediately.
\end{proof}

We also need the following well-known result for which we will include some comments on how it can be proved. It will be also useful later on.

\begin{lemma}\label{lemma:RNP-compact-derivable}
Let $Y$ be a Banach space.
\begin{enumerate}
\item[\textup{(a)}] If $g \in \Lip(\R,Y)$ and either $Y$ has the Radon-Nikod\'{y}m property or $g$ is Lipschitz compact, then there is $\varphi \in L_\infty(\R,Y)$ such that
$$
g(t)=\int_{0}^t \varphi(s)\,ds \qquad \text{\emph{for} } t\in \R.
$$
Note that, in this case, $g$ is differentiable almost everywhere and $\varphi$ coincides almost everywhere with $g'$. Moreover, $\|g\|=\|\varphi\|_\infty$.
\item[\textup{(b)}] Conversely, if $\psi\in L_\infty(\R,Y)$ and we define $h\colon \R\longrightarrow Y$ by
$$
h(t)=\int_0^t \psi(s)\, ds \qquad \text{\emph{for} } t\in \R,
$$
then $h\in \Lip(\R,Y)$, $h$ is differentiable a.e., $h'=\psi$ a.e., and $\|h\|=\|\psi\|_\infty$. Moreover,  $\psi(\R)\subset K$ a.e.\  for some compact subset $K$ of $Y$ if and only if $h\in \Lipk(\R,Y)$.
\end{enumerate}
\end{lemma}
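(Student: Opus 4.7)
The plan is to identify each Lipschitz map $g\in\Lip(\R,Y)$ with its associated linear operator $T_g$ on the Lipschitz-free space via Lemma~\ref{Lemm:elementarypropertiesF(X)}, and then exploit the explicit isometric identification $\F(\R)\cong L_1(\R)$ given by $\delta_t \longleftrightarrow \chi_{[0,t]}$. Under this identification $T_g$ becomes a bounded linear operator $L_1(\R)\to Y$ with $\|T_g\|=\|g\|$ and $T_g(\chi_{[0,t]})=g(t)$ for every $t\in\R$. The whole lemma then follows once we know that $T_g$ is \emph{representable}, meaning that $T_g(f)=\int_\R f\,\varphi\,d\lambda$ for some $\varphi\in L_\infty(\R,Y)$ with $\|\varphi\|_\infty=\|T_g\|$.

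For part (a), the two alternative hypotheses give representability of $T_g$ via two classical theorems. If $Y$ has the Radon--Nikod\'ym property, this is essentially the defining property: every bounded linear operator from $L_1$ into an RNP space admits an $L_\infty$-kernel (applied on each bounded interval and patched using $\sigma$-finiteness of Lebesgue measure). If instead $g$ is Lipschitz compact, then by Lemma~\ref{Lemm:elementarypropertiesF(X)}(b) the operator $T_g$ is compact, in particular weakly compact, and here I would invoke the Dunford--Pettis type result that every weakly compact operator $L_1(\mu)\to Y$ is Bochner-representable. In either case, evaluating at $\chi_{[0,t]}$ yields $g(t)=\int_0^t \varphi(s)\,ds$, and differentiability a.e.\ with $g'=\varphi$ a.e.\ follows from the Lebesgue differentiation theorem for Bochner integrals.

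For part (b), the Lipschitz bound $\|h\|\leq \|\psi\|_\infty$ is immediate from the standard norm estimate for Bochner integrals, and Lebesgue differentiation again gives $h'=\psi$ a.e., hence equality $\|h\|=\|\psi\|_\infty$. For the compactness characterization, one direction is the observation that if $\psi$ is essentially valued in a compact set $K\subset Y$, then
\[
\frac{h(t)-h(s)}{t-s}=\frac{1}{t-s}\int_s^t \psi(u)\,du \in \overline{\co}(K),
\]
and this closed convex hull is compact by Mazur's theorem, so $\operatorname{Slope}(h)$ is relatively compact. Conversely, if $h\in\Lipk(\R,Y)$ then $\overline{\operatorname{Slope}(h)}$ is norm compact, and at every point of differentiability the value $\psi(t)=h'(t)$ is a norm-limit of slopes, so $\psi$ is essentially valued in this compact set.

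The main subtle point, and really the only nontrivial external input, is the representability assertion in part (a) for compact operators when $Y$ need not have RNP: this is where one must appeal to the structural result that weakly compact operators out of $L_1(\mu)$ are Bochner-representable (as found in Diestel--Uhl). Everything else is a routine assembly of the Lipschitz-free space identification with the Bochner fundamental theorem of calculus.
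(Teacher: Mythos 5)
Your proposal is correct and takes essentially the same route as the paper's proof: both identify $g$ with $T_g\in\Lin(\F(\R),Y)=\Lin(L_1(\R),Y)$ via Lemma~\ref{Lemm:elementarypropertiesF(X)}, obtain representability of $T_g$ from the Radon--Nikod\'ym property in the first case and from the fact that compact (hence weakly compact) operators out of $L_1$ are representable (the Dunford--Pettis theorem, \cite[Theorem III.2.2]{DieUhl}) in the second, and then evaluate at $\chi_{[0,t]}$ and apply Lebesgue differentiation. Your treatment of part (b) -- Mazur's theorem giving $\operatorname{Slope}(h)\subset\overline{\co}(K)$ in one direction and the a.e.\ inclusion $\psi(t)\in\overline{\operatorname{Slope}(h)}$ in the other -- also matches the paper's argument.
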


If $Y$ has the Radon-Nikod\'{y}m property, then the first assertion in (a) follows easily from the proof of \cite[Theorem 5.21]{BenyaLinden}, where it is stated for Lipschitz maps defined on bounded intervals, but the result can be extended to those defined in the whole $\R$. For a Lipschitz compact map, we note that the ideas in the proof of \cite[Theorem 5.21]{BenyaLinden} are valid for a set with the Radon-Nikod\'{y}m property, and the rest of the proof is the same. For the sake of completeness, we would like to provide a direct proof using Lipschitz-free spaces.

\begin{proof}
(a) Suppose $Y$ has the Radon-Nikod\'ym property. Then, $T_g \in \Lin(L_1(\R),Y)$ defined as in Lemma~\ref{Lemm:elementarypropertiesF(X)}.(a) is representable \cite[Theorem III.1.5]{DieUhl}. That is, there exists $\varphi \in L_\infty(\R,Y)$ such that
$$
T_g(f) = \int f\varphi(s)\, ds \qquad \text{for every } f \in L_1(\R).
$$
Hence, if we put $f = \chi_{[0,t]}$, we get
$$
g(t) = T_g(\chi_{[0,t]}) = \int_{0}^t \varphi(s)\, ds \qquad \text{for } t \in \R
$$
by the isometric correspondence given in Lemma \ref{Lemm:elementarypropertiesF(X)}.

If we assume rather that $g \in \Lipk(\R,Y)$, then we can also deduce that $T_g \in \K(L_1(\R),Y)$ is representable due to Lemma \ref{Lemm:elementarypropertiesF(X)}.(b) and \cite[Theorem III.2.2]{DieUhl}. The rest of the proof is identical to the previous case.

(b) Only the `moreover' part requires a comment: if $\psi(\R)\subset K$ a.e.\ for some compact subset $K$ of $Y$, then
$\operatorname{Slope}(h)\subset \overline{\co}(K)$ (see \cite[Proposition~1.6.9.iv]{CobzasMiculuscuNicolae-LNM}, for instance), so $h\in\Lipk(\R,Y)$ as desired. Conversely, if $h\in \Lipk(\R,Y)$, then the conclusion easily follows from that $\psi(t)\in \overline{\operatorname{Slope}(h)}$ a.e.\ for $t\in \R$.
\end{proof}

We now provide the pending proof of Proposition \ref{prop:inclusion}.

\begin{proof}[Proof of Proposition \ref{prop:inclusion}]
(a) To prove that $\D(X,Y) \subset \LDA(X,Y)$, let $f \in \D(X,Y)$. Then, there exist $x \in X$ and $e \in S_X$ such that $f'(x,e)$ exists and $\| f'(x,e) \| = \| f \|.$ Put $(x_n,y_n)=(x+\frac{e}{n},x)$ for each $n \in \N$. As $n$ tends to $\infty$, we have that
$$
\frac{f(x_n)-f(y_n)}{\| x_n-y_n \|} \longrightarrow f'(x,e), \quad \frac{x_n-y_n}{\| x_n-y_n \|} \longrightarrow e \quad \text{and} \quad x_n,y_n \longrightarrow x.
$$
The rest of the inclusions are obvious from their definitions.

(b) If $f\in \Lip(X,Y)$ strongly attains its norm at $(x,y)\in \widetilde{X}$, it is immediate that $f$ attains its norm directionally in the direction $u=\frac{x-y}{\|x-y\|}$ using the constant sequence.

Assertion (c) is immediate from the compactness of $S_X$ and the same happens for (d) from the compactness of all closed bounded subsets of $Y$.

(e) Suppose that $f\in \Lip(X,Y)$ strongly attains its norm at $(x,y)$, write $u=\frac{x-y}{\|x-y\|}\in S_X$ and consider $g\colon  \R \longrightarrow  Y$ defined by $g(t)=f(y+tu) - f (y)$ for $t \in \R$. It is clear that $g \in \SA(\R,Y)$ with $\|g\|= \|f\|$. As $Y$ has the Radon-Nikod\'{y}m property, Lemma \ref{lemma:RNP-compact-derivable}.(a) gives that $g$ is differentiable almost everywhere and, moreover,
$$
g(t)=\int_0^t g'(s)\,ds \qquad \text{for } t \in \R.
$$
Since $g$ strongly attains its norm at $(0,\|x-y\|)$, Lemma \ref{lemma:SNAinLDA} shows that
$$
\|g(t)-g(s)\|=\|g\|\,|t-s| \qquad \text{for } 0 \leq t,s \leq \|x-y\|,
$$
which implies that
\begin{align*}
  \|g\| & = \frac{1}{|t|}\bigl\|g(t)-g(0)\bigr\| \leq \frac{1}{|t|} \int_0^t \|g'(s)\|\,ds \leq \|g\| \qquad \text{for } 0 < t \leq \|x-y\|.
  \end{align*}
Thus, $\|g'(s)\|=\|g\|$ almost everywhere $0 < s \leq \|x-y\|$. It is enough to consider any $0 < s_0 \leq \|x-y\|$ for which $\|g'(s_0)\|=\|g\|=\|f\|$, write $x_0=y+s_0u\in X$, and observe that $f'(x_0,u)=g'(s_0)$ to conclude that $f\in \D(X,Y)$.
\end{proof}

The next result shows that the equality in Proposition \ref{prop:inclusion}.(c) only holds when the domain space is finite-dimensional.

\begin{prop}\label{prop:Xinfdim-DAneqA}
Let $X$ be an infinite-dimensional Banach space. Then, $\DA(X,Y)\neq \A(X,Y)$ for every nontrivial Banach space $Y$.
\end{prop}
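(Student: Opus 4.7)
The plan is to exhibit an explicit $f \in \A(X,Y)\setminus \DA(X,Y)$ of the form $f = \phi\cdot y_0$, where $y_0\in S_Y$ is arbitrary (which exists because $Y$ is nontrivial) and $\phi\in \Lip(X,\R)$ is built as a supremum of scaled functionals drawn from a weak${}^*$-null sequence. The key input will be the Josefson-Nissenzweig theorem, which, since $X$ is infinite-dimensional, provides a sequence $(x_n^*)\subset S_{X^*}$ such that $x_n^*(v)\to 0$ for every $v\in X$. For each $n$, fix $u_n\in S_X$ with $x_n^*(u_n)>1-1/n$ and set
$$
\phi(x):=\sup_{n\in \N}\Bigl(1-\tfrac{1}{n}\Bigr)x_n^*(x), \qquad f:=\phi\cdot y_0.
$$

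Being a supremum of linear functionals of norm at most $1-1/n<1$, $\phi$ is $1$-Lipschitz with $\phi(0)=0$, so $f\in \Lip(X,Y)$ with $\|f\|\leq 1$. Taking the pair $(u_n,0)$ yields $\phi(u_n)\geq(1-1/n)^2\to 1$, hence $\|f\|=1$ and $f\in \A(X,Y)$ with slopes tending to $y_0$. It remains to show $f\notin \DA(X,Y)$.

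Suppose, for contradiction, that there are $(a_n,b_n)\in \widetilde{X}$, $z\in Y$ with $\|z\|=1$, and $u\in S_X$ such that $\frac{f(a_n)-f(b_n)}{\|a_n-b_n\|}\to z$ and $\frac{a_n-b_n}{\|a_n-b_n\|}\to u$. Since every slope of $f$ lies on $\R y_0$, we must have $z=\lambda y_0$ with $\lambda\in\{-1,+1\}$; the case $\lambda=-1$ reduces to $\lambda=+1$ by swapping $a_n,b_n$ and replacing $u$ by $-u$, so assume $\lambda=+1$. By the definition of supremum, I can pick $k_n\in \N$ with $(1-1/k_n)x_{k_n}^*(a_n)>\phi(a_n)-\|a_n-b_n\|/n$ (scaling the slack by $\|a_n-b_n\|$ so that it is absorbed upon normalization); combining this with $\phi(b_n)\geq (1-1/k_n)x_{k_n}^*(b_n)$ and dividing by $\|a_n-b_n\|$, I obtain $(1-1/k_n)\,x_{k_n}^*\bigl(\tfrac{a_n-b_n}{\|a_n-b_n\|}\bigr)\to 1$. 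This forces $k_n\to\infty$ (any bounded subsequence would cap the limit below $1$) and then $x_{k_n}^*\bigl(\tfrac{a_n-b_n}{\|a_n-b_n\|}\bigr)\to 1$; using $\|x_{k_n}^*\|=1$ together with directional convergence gives $x_{k_n}^*(u)\to 1$, contradicting the weak${}^*$-nullity of $(x_n^*)$.

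The main obstacle is ensuring that no single $u\in S_X$ can witness the norm directionally, i.e., that no subsequence of the $x_n^*$'s evaluates to $1$ on any fixed $u$; this is exactly the role of Josefson-Nissenzweig, which is what produces the required weak${}^*$-null sequence in an arbitrary infinite-dimensional space. Purely geometric substitutes (e.g., Riesz's lemma, giving $\|u_n-u_m\|\geq 1/2$) only separate the $u_n$'s in norm and do not preclude the existence of a bad limit direction $u$ with $|x_{k_n}^*(u)|\to 1$ along a subsequence.
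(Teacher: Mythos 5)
Your proof is correct. Every step checks out: $\phi$ is real-valued (each term is bounded by $\|x\|$) and $1$-Lipschitz with $\phi(0)=0$, the pairs $(u_n,0)$ give $\|f\|=1$ and $f\in \A(X,Y)$ toward $y_0$, the reduction to $z=y_0$ by swapping the pairs is legitimate since all slopes lie in the closed set $\R y_0$, and the near-optimal index selection with slack $\|a_n-b_n\|/n$, followed by the sandwich argument forcing $\bigl(1-\frac{1}{k_n}\bigr)x_{k_n}^*\bigl(\frac{a_n-b_n}{\|a_n-b_n\|}\bigr)\to 1$, hence $k_n\to\infty$ and then $x_{k_n}^*(u)\to 1$, is airtight and contradicts weak$^*$-nullity. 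Your route differs from the paper's in its key input, though the skeletons are parallel. The paper cites \cite[Lemma 2.2]{M-M-P} to obtain a single non-norm-attaining operator $T\in\Lin(X,c_0)$, sets $f(x)=\|T(x)\|y_0$, and excludes $f$ from $\DA(X,Y)$ in one line: the slopes are dominated by $\bigl\|T\bigl(\frac{x_n-y_n}{\|x_n-y_n\|}\bigr)\bigr\|\longrightarrow \|T(u)\|<\|T\|=\|f\|$, with the linearity and continuity of $T$ doing all the work. Your $\phi(x)=\sup_n\bigl(1-\frac{1}{n}\bigr)x_n^*(x)$ is in effect a one-sided version of $\|Tx\|_{c_0}$ for the canonical Josefson--Nissenzweig operator $Tx=\bigl(\bigl(1-\frac{1}{n}\bigr)x_n^*(x)\bigr)_n$, so your argument amounts to unfolding the cited lemma inline: the index extraction and the step $k_n\to\infty$ replace the estimate $\|T(d_n)\|\to\|T(u)\|$. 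What you gain is self-containedness — modulo Josefson--Nissenzweig, which is also the engine behind the M-M-P lemma — plus the mildly pleasant feature that your $\phi$ is convex; what the paper gains by citing is brevity and a cleaner non-attainment estimate. Your closing observation is also on point: purely metric substitutes such as Riesz's lemma cannot rule out a bad limit direction $u$, and some genuinely weak$^*$ input is needed, exactly as in the paper's proof where the failure of $T$ to attain its norm at $u$ plays that role.
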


\begin{proof}
If $X$ is infinite dimensional, it is shown in \cite[Lemma 2.2]{M-M-P} that there exists $T\in \Lin(X,c_0)$ which does not attains its norm as a linear operator. If we fix any $y_0\in S_{Y}$ and define $f\colon X \longrightarrow  Y$ by $f(x)=\|T(x)\|y_0$, then it is evident that $f\in \A(X,Y)$ with $\|f\|=\|T\|$. On the other hand, $f\notin \DA(X,Y)$ by the almost same argument as in \cite[Lemma~3.2]{KadMarSol}. Indeed, assume that $f \in \DA(X,Y)$. It follows that there exist $\{(x_n,y_n)\}_{n=1}^\infty\subset \widetilde{X}$ and $u \in S_X$ such that
$$
\frac{\|f(x_n)-f(y_n)\|}{\|x_n-y_n\|} \longrightarrow \|f\| \quad \text{and} \quad \frac{x_n-y_n}{\|x_n-y_n\|} \longrightarrow u.
$$
But this is impossible, because
\begin{align*}
\frac{\|f(x_n)-f(y_n)\|}{\|x_n-y_n\|} &=\frac{\bigl\|\|T(x_n)\| y_0 - \|T(y_n)\| y_0\bigr\|}{\|x_n-y_n\|} \leq \frac{\|T(x_n)-T(y_n)\|}{\|x_n-y_n\|} \longrightarrow \| T(u) \|
\end{align*}
and $T$ does not attain its norm at $u \in S_X$ as a linear operator.
\end{proof}

The next example shows that the inclusion given in Proposition \ref{prop:inclusion}.(e) can be false for a range space $Y$ without the Radon-Nikod\'{y}m property.

\begin{example}
{\slshape Consider a Lipschitz map $f\colon \R \longrightarrow  L_1(\R)$ given by
\begin{displaymath}
f(t)=\left\{\begin{array}{@{}cl}
\displaystyle \phantom{.} 0 & \text{\emph{if} } t \leq 0 \\\\
\displaystyle \phantom{.} \chi_{[0,t]} & \text{\emph{if} } t>0.
\end{array} \right.
\end{displaymath}
Then, $f\in \SA(\R,L_1(\R))$ but $f\notin \LDA(\R,L_1(\R))$.}

Indeed, for all $0<s<t$ we have that
$$
\left\|\frac{f(t)-f(s)}{t-s}\right\|=1,
$$
so $f\in \SA(\R,L_1(\R))$. On the other hand, suppose that $f$ attains its norm locally directionally at the point $\bar{t} \in \R$ for some sequence $\{(t_n,s_n)\}_{n=1}^\infty \subset \widetilde{\R}$. Since $|t_n-s_n| \longrightarrow 0$, it is immediate that the sequence
$$
\left\{\frac{f(t_n)-f(s_n)}{t_n-s_n}\right\}_{n=1}^\infty \subset L_1(\R)
$$
either converges to $0$ or does not converge in $L_1(\R)$. It follows that $f\notin \LDA(\R,L_1(\R))$.
\end{example}

Example \ref{example:r} and Proposition \ref{prop:aneq} below, together with Proposition \ref{prop:Xinfdim-DAneqA} show that all the inclusions given in assertions (a), (b), and (e) of Proposition~\ref{prop:inclusion} can be proper.

\begin{example}\label{example:r}
{\slshape We have that}
$$
\SA(\R,\R) \subsetneqq \D(\R,\R) \subsetneqq \LDA(\R,\R) \subsetneqq \DA(\R,\R).
$$
\end{example}

\begin{proof}
Note that $\SA(\R,\R)$ is the set of all functions $f \in \Lip(\R,\R)$ which contain a line segment with slope either $\| f \|$ or $-\| f \|$ in its graph. To see that $\SA(\R,\R) \subsetneqq  \D(\R,\R)$, consider $f(t) = \sin t$, whose graph contains no line segment but $f'(0)=\| f \| = 1$.

To see that $\D(\R,\R) \subsetneqq  \LDA(\R,\R)$, define $\displaystyle  g(t)=\int_0^t \varphi(s)\,ds$, where $\varphi \in L_\infty(\R)$ is given by
\begin{displaymath}
\varphi(t)=\left\{\begin{array}{@{}cl}
\displaystyle \phantom{.} 1-2^{-n} & \text{if } 2^{-2^n} < t < 2^{-2^n}+2^{-2^{n+1}} \text{ for each } n \in \N, \\\\
\displaystyle \phantom{.} 0 & \text{otherwise}.
\end{array} \right.
\end{displaymath}
Clearly, $g \in \Lip(\R,\R)$ and $\|g\|=1$. If we put $(t_n,s_n)=(2^{-2^n},2^{-2^n}+2^{-2^{n+1}})$ for each $n \in \N$, then we can easily see that $g \in \LDA(\R,\R)$ from
$$
\frac{g(t_n)-g(s_n)}{t_n-s_n} \longrightarrow 1 \quad \text{and} \quad t_n,s_n \longrightarrow 0.
$$
On the other hand, we have
$$
\lim_{t \to t_0} \left| \frac{g(t)-g(t_0)}{t-t_0} \right| <1 \quad \text{if } t_0 \neq 0 \quad \text{and} \quad \lim_{t\to0} \left|\frac{g(t)-g(0)}{t-0} \right| = 0.
$$
Indeed, given $\eps>0$ choose $n \in \N$ so that $2^{1-2^n} < \eps$. Fix any point $0<t<2^{-2^{n-1}}$. We can find $n_0 \geq n$ such that $2^{-2^{n_0}} \leq t < 2^{-2^{n_0-1}}$. Then,
$$
\frac{g(t)}{t} = \frac{1}{t}\int_0^t g'(s)\, ds \leq \frac{1}{t}\sum_{k=n_0}^{\infty} 2^{-2^{k+1}} \leq \sum_{k=n_0}^{\infty} 2^{-2^{k+1}} \cdot 2^{2^{n_0}} \leq 2^{1-2^{n_0}} < \eps,
$$
which shows that $\lim_{t\to0^+} \frac{g(t)}{t} = 0$. The rest is clear.

Finally, to see that the inclusion $\LDA(\R,\R) \subset \DA(\R,\R)$ is proper, consider the function $h(t)= \sqrt{1+t^2}-1$. A simple calculation shows that $\lim_{t \to \infty} h'(t)= \| h \| = 1$ while $h'$ is continuous and $|h'(t)|<1$ for any $t \in \R$, so $h\notin \LDA(\R,\R)$, as desired.
\end{proof}

The next proposition characterizes the finite dimensionality of a Banach space $Y$ in term of the set $A(X,Y)$, and shows that the inclusion $\A(X,Y) \subset \Lip(X,Y)$ is proper in many cases.

\begin{prop}\label{prop:aneq}
Let $Y$ be an infinite dimensional Banach space. Then $\A(X,Y) \subsetneqq \Lip(X,Y)$ for every nontrivial Banach space $X$.
\end{prop}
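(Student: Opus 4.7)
The strategy is to first construct a scalar-domain counterexample $g\in\Lip(\R,Y)\setminus\A(\R,Y)$ with $\|g\|=1$, exploiting the infinite-dimensionality of $Y$, and then transfer it to $X$ by precomposition with a norm-one functional. First, I would invoke Riesz's lemma to produce a sequence $\{y_n\}_{n\geq 1}\subset S_Y$ with $\|y_n-y_m\|\geq 1/2$ for $n\neq m$, so that no subsequence of $\{y_n\}$ converges.

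Next, I would take pairwise disjoint intervals $I_n:=[4n,\,4n+1/n^2]$ (the gap between consecutive intervals exceeds $3$) and define
$$
\varphi:=\sum_{n\geq 1}\Bigl(1-\tfrac{1}{n}\Bigr)\, y_n\, \chi_{I_n}\in L_\infty(\R,Y),\qquad g(t):=\int_0^t\varphi(s)\,ds;
$$
Lemma~\ref{lemma:RNP-compact-derivable}(b) then gives $g\in\Lip(\R,Y)$ with $\|g\|=\|\varphi\|_\infty=\sup_n(1-1/n)=1$. The key estimate is that every slope $q=\frac{g(t)-g(s)}{t-s}$ has norm strictly less than $1$: if $[s,t]$ meets at most one interval $I_n$, then $q$ equals $0$ or $\theta(1-1/n)y_n$ for some $\theta\in(0,1]$, so $\|q\|\leq 1-1/n<1$; if $[s,t]$ meets two or more intervals, then $t-s\geq 3$ while $\bigl\|\int_s^t\varphi(u)\,du\bigr\|\leq \sum_n 1/n^2=\pi^2/6$, so $\|q\|\leq\pi^2/18<1$. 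Consequently, if slopes $q_k\to z$ with $\|z\|=1$, the multi-interval case can hold only finitely often, so eventually $q_k=\theta_k(1-1/n_k)y_{n_k}$; the condition $\|q_k\|\to 1$ forces $n_k\to\infty$ and $\theta_k\to 1$, and normalizing yields $y_{n_k}=q_k/\|q_k\|\to z$, contradicting the Riesz separation of $\{y_n\}$. Thus $g\notin\A(\R,Y)$.

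Since $X\neq\{0\}$, I would pick $\phi\in S_{X^*}$ and set $f(x):=g(\phi(x))$. The inequality $\|f\|\leq\|g\|\,\|\phi\|=1$ is immediate; for the reverse, I would take $e_n\in S_X$ with $\phi(e_n)\to 1$ and note that for any fixed $s\neq t$ the slope of $f$ at $(se_n,te_n)$ tends to the slope of $g$ at $(s,t)$, yielding $\|f\|\geq\|g\|=1$. To conclude, suppose for contradiction that $f\in\A(X,Y)$ with $\frac{f(x_k)-f(y_k)}{\|x_k-y_k\|}\to z$ and $\|z\|=1$. The case $\phi(x_k)=\phi(y_k)$ yields slope zero, which is excluded, so I would factor
$$
\frac{f(x_k)-f(y_k)}{\|x_k-y_k\|}=\alpha_k v_k,\qquad \alpha_k:=\frac{\phi(x_k)-\phi(y_k)}{\|x_k-y_k\|}\in[-1,1],\qquad v_k:=\frac{g(\phi(x_k))-g(\phi(y_k))}{\phi(x_k)-\phi(y_k)},
$$
and observe that $\sgn(\alpha_k)\,v_k$ lies in $\operatorname{Slope}(g)$ by the symmetry of slopes on $\R$. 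Since $|\alpha_k|\,\|v_k\|\to 1$ with both factors bounded by $1$, both must tend to $1$; hence $\sgn(\alpha_k)\,v_k\to z$ furnishes a slope sequence of $g$ converging to a vector of norm $\|g\|$, contradicting the previous paragraph.

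The main obstacle will be the slope-norm estimate for $g$, but with gap $\geq 3$ and the summability of $\sum 1/n^2$ it reduces to a short computation. The rest is routine transfer between the $(\R,Y)$ and $(X,Y)$ pictures via the linear functional $\phi$.
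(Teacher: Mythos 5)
Your proof is correct, but it follows a genuinely different route from the paper's. The paper works directly on $X$: it extracts a basic sequence $\{v_j\}\subset S_Y$ and builds the radial map $f(x)=\sum_j \frac{j}{j+1}\,s_j(\|x\|)\,v_j$ from tent functions $s_j$ supported on $[2j-2,2j]$, then rules out attainment toward any $z\in S_Y$ by a case analysis on $\|x_n\|$, $\|y_n\|$ combined with the coordinate functionals of the basic sequence (to force $\|x_n\|<2$ eventually). You instead solve the problem on $\R$ first --- integrating an $L_\infty$ density whose values run through a Riesz-separated sequence, in the spirit of Lemma~\ref{lemma:RNP-compact-derivable}.(b) --- and then transfer to an arbitrary nontrivial $X$ by precomposing with $\phi\in S_{X^*}$. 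This buys several things: Riesz's lemma is more elementary than producing a basic sequence (which needs Mazur's theorem); the slope analysis on $\R$ is a clean two-case computation (at most one interval versus a gap of length at least $3$, where $\sum 1/n^2=\pi^2/6$ makes the multi-interval slopes uniformly small); and your factorization $\frac{f(x_k)-f(y_k)}{\|x_k-y_k\|}=\alpha_k v_k$ with $|\alpha_k|\leq 1$, $\|v_k\|\leq 1$ is a reusable transfer principle: $g\circ\phi\notin\A(X,Y)$ whenever $g\notin\A(\R,Y)$ and $\|g\circ \phi\|=\|g\|$, which you verify via $e_n\in S_X$ with $\phi(e_n)\to 1$. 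What the paper's construction buys in exchange is modest: it uses only the norm of $X$ (no Hahn--Banach, no choice of $\phi$), since the counterexample depends on $x$ through $\|x\|$ alone. One bookkeeping point in your write-up: you compute slopes of $g$ with the signed denominator $t-s$, whereas the definition of $\A$ uses $\|x-y\|$, so the slope set of $g$ is the symmetric family $\{\pm\theta(1-1/n)\,y_n\}\cup\{0\}$ together with the small multi-interval slopes; accordingly, in the final contradiction $\sgn(\alpha_k)v_k\to z$ you should pass to a subsequence of constant sign, obtaining $y_{n_k}\to z$ or $y_{n_k}\to -z$ with $n_k\to\infty$ --- either limit contradicts the $1/2$-separation, since your argument never used anything about $z$ beyond $\|z\|=1$. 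With that one-line fix the proof is complete.
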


\begin{proof}
Choose a basic sequence of distinct vectors $\{v_j\colon  j \in \N\} \subset S_Y$ and consider $f\colon  X \longrightarrow  Y$ defined by
$$
f(x)= \sum_{j=1}^\infty \frac{j}{j+1} s_{j} (\|x\|) v_j,
$$
where each $s_j\colon  \R \longrightarrow  \R$ is given as
\begin{displaymath}
s_j(t)=\left\{\begin{array}{cl}
\displaystyle 0 & \text{if } t < 2j-2, \\
\displaystyle t-2j+2 & \text{if } 2j-2 \leq t < 2j-1, \\
\displaystyle -t+2j & \text{if } 2j-1 \leq t < 2j, \\
\displaystyle 0 & \text{if } 2j \leq t.
\end{array} \right.
\end{displaymath}
To see that $\|f\| \leq 1$, we claim that
$$
\|f(x)-f(y)\| \leq \bigl| \|x\|-\|y\|\bigr|,
$$
for any $x,y \in X$, and then the conclusion follows from the fact that $\bigl| \|x\|-\|y\|\bigr| \leq \|x-y\|$. Indeed, given any $x \in X$, we denote by $j_x$ the unique corresponding $j \in \N$ of $x$ such that $2j-2 \leq \|x\| < 2j$. Then, from the construction it is obvious that $s_j(x)=0$ if $j \neq j_x$. Let $x,y \in X$ be given. First, suppose that $j_x=j_y$. Then, we have
$$
\|f(x)-f(y)\| \leq \bigl| s_{j_x}(\|x\|) - s_{j_x}(\|x\|) \bigr| \leq \bigl| \|x\| - \|y\| \bigr|,
$$
because $\|s_{j_x}\| \leq 1$. So it remains to show when $\|x\|<2j_x \leq \|y\|$. But note that
\begin{align*}
\|f(x)-f(y)\| \leq s_{j_x}(\|x\|) + s_{j_y}(\|y\|) & = \bigl[ s_{j_x}(\|x\|)-s_{j_x}(2j_x) \bigr]+\bigl[s_{j_y}(\|y\|)-s_{j_y}(2j_x)\bigr] \\
& \leq \bigl( 2j_x - \|x\|\bigr) + \bigl( \|y\| - 2j_x \bigr) = \bigl| \|x\| - \|y\| \bigr|,
\end{align*}
which proves the claim.

Now, fix $x_0\in S_X$ and write $(x_n,y_n)=(2nx_0,(2n+1)x_0)\in \widetilde{X}$ for each $n \in \N$. Then,
$$
\| f \| \geq \sup_{n} \frac{\| f(x_n)-f(y_n) \|}{\|x_n-y_n\|} = 1,
$$
hence $\|f\| =1.$
But $f$ cannot attain its norm toward any $z \in S_Y$. Indeed, suppose that $f$ attains its norm toward $z \in S_Y$ for some sequence $\{(x_n, y_n)\}_{n=1}^{\infty} \subset \widetilde{X}$. Up to a subsequence, we may suppose that $\|x_n\|<\|y_n\|$ for every $n \in \N$. Observe that $z \in \overline{\spann}^{\|\cdot\|} \{v_j\colon j \in \N\}$ and thus, being a basic sequence, we get that $z=\sum_{n=1}^{\infty} a_n v_n$ for suitable sequence $\{a_n\}$ of scalars. Without loss of generality, let $a_1 \neq 0$. Then $\|x_n\| <2$ for sufficiently large $n \in \N$. Otherwise, $f(x_n)-f(y_n)$ would have zero coefficient on $v_1$. Finally, we claim that
$$
\frac{\| f(x_n)-f(y_n) \|}{\|x_n-y_n\|} \leq \frac{2}{3} \qquad \text{if } \|x_n\|<2,
$$
which will end up with a contradiction. If $\|y_n\| \leq 2$, then it is clear that $$\| f(x_n)-f(y_n) \| \leq \frac{1}{2} \bigl| \|x_n\|-\|y_n\| \bigr|.$$
So we may assume that $\|y_n\|>2$. Fix any $x_0 \in S_X$ and, by a simple calculation, we can see that
\begin{align*}
\frac{\|f(x_n)-f(y_n)\|}{\|x_n-y_n\|} &\leq \max \left\{ \frac{\|f(x_n)-f(2x_0)\|}{\bigl| \|x_n\|-2 \bigr|}, \frac{\|f(2x_0)-f(y_n)\|}{\bigl| 2-\|y_n\| \bigr|} \right\} \\
& \leq \max \left\{ \frac{1}{2}, \frac{\|f(y_n)\|}{\bigl| 2-\|y_n\| \bigr|} \right\}.
\end{align*}
Hence, it suffices to check that $\frac{\|f(y_n)\|}{|2-\|y_n\||} \leq 2/3$. If $\|y_n\| < 4$, then $j_{2x_0}=j_{y_n}$ which ensures that $\frac{\|f(y_n)\|}{| 2-\|y_n\| |} \leq 2/3$. If $\|y_n\| \geq 4$, then we have $\bigl| 2-\|y_n\| \bigr| \geq 2$, so that $\frac{\|f(y_n)\|}{|2-\|y_n\||} \leq 1/2$.
\end{proof}

As an immediate consequence of Propositions \ref{prop:Xinfdim-DAneqA} and \ref{prop:aneq}, we get the following characterization of the finite dimensionality of both $X$ and $Y$, simultaneously.
\begin{cor}
Let $X$, $Y$ be nontrivial Banach spaces. Then, $\DA(X,Y)=\Lip(X,Y)$ if and only if both $X$ and $Y$ are finite-dimensional.
\end{cor}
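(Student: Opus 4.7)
The plan is to derive this corollary as a direct combination of Proposition~\ref{prop:inclusion} (parts (a), (c), (d)) together with Propositions~\ref{prop:Xinfdim-DAneqA} and \ref{prop:aneq}. Since each of these building blocks is already proved above, the argument will essentially amount to chasing inclusions and taking contrapositives, so I do not expect any real obstacle.

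For the sufficient (easy) direction, I would assume that both $X$ and $Y$ are finite-dimensional and apply Proposition~\ref{prop:inclusion}. From part~(d), the finite-dimensionality of $Y$ gives $\A(X,Y)=\Lip(X,Y)$, and from part~(c), the finite-dimensionality of $X$ gives $\DA(X,Y)=\A(X,Y)$. Chaining these two equalities yields $\DA(X,Y)=\Lip(X,Y)$, as required.

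For the necessary direction, I would argue by contrapositive: I assume that at least one of the spaces is infinite-dimensional and show $\DA(X,Y)\subsetneqq \Lip(X,Y)$. Recall from Proposition~\ref{prop:inclusion}.(a) that $\DA(X,Y)\subset \A(X,Y)\subset \Lip(X,Y)$ unconditionally. If $X$ is infinite-dimensional, Proposition~\ref{prop:Xinfdim-DAneqA} supplies a map in $\A(X,Y)\setminus \DA(X,Y)$ (for any nontrivial $Y$), which already witnesses $\DA(X,Y)\subsetneqq \Lip(X,Y)$. If instead $Y$ is infinite-dimensional, Proposition~\ref{prop:aneq} provides a map in $\Lip(X,Y)\setminus \A(X,Y)$ (for any nontrivial $X$), which again witnesses $\DA(X,Y)\subsetneqq \Lip(X,Y)$. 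In either case the equality fails, completing the contrapositive.

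Thus the only ingredients are the inclusion chain from Proposition~\ref{prop:inclusion} and the two strict-inclusion results just proved; no new construction is needed. The step that one might expect to be delicate, namely producing actual counterexamples, has already been carried out in Propositions~\ref{prop:Xinfdim-DAneqA} and \ref{prop:aneq}, so the corollary reduces to bookkeeping.
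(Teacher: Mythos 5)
Your proposal is correct and follows exactly the route the paper intends: the paper states the corollary as an immediate consequence of Propositions~\ref{prop:Xinfdim-DAneqA} and~\ref{prop:aneq}, with the sufficiency direction coming from Proposition~\ref{prop:inclusion}.(c) and~(d), which is precisely your decomposition. No gaps; the bookkeeping is as routine as you expected.
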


Let us now discuss the inclusion relations for Lipschitz compact maps.

\begin{prop}\label{prop:inclusion-compact}
Let $X$ and $Y$ be Banach spaces. Then
\begin{enumerate}
\item[\textup{(a)}] $\SAk(X,Y)\subset \Dk(X,Y) \subset \LDAk(X,Y) \subset \DAk(X,Y) \subset \Ak(X,Y)$.
\item[\textup{(b)}] $\Ak(X,Y)=\Lipk(X,Y)$.
\end{enumerate}
\end{prop}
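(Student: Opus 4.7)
The plan for (a) is to derive the inclusions $\Dk(X,Y) \subset \LDAk(X,Y) \subset \DAk(X,Y) \subset \Ak(X,Y)$ immediately from Proposition~\ref{prop:inclusion}.(a) by intersecting each set with $\Lipk(X,Y)$. The only nontrivial inclusion is $\SAk(X,Y) \subset \Dk(X,Y)$, since Proposition~\ref{prop:inclusion}.(e) assumed that $Y$ has the Radon-Nikod\'ym property, which is no longer available here. However, Lemma~\ref{lemma:RNP-compact-derivable}.(a) is formulated with an \emph{either/or} hypothesis (RNP of $Y$ \emph{or} Lipschitz compactness of $g$), and this is precisely what lets us recycle the earlier argument.

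Concretely, given $f\in \SAk(X,Y)$ strongly attaining its norm at $(x,y)\in\widetilde{X}$, set $u=\frac{x-y}{\|x-y\|}$ and $g(t)=f(y+tu)-f(y)$ for $t\in \R$. The key observation is that $g\in \Lipk(\R,Y)$: indeed, every slope of $g$ has the form
$$
\frac{g(t)-g(s)}{t-s}=\frac{f(y+tu)-f(y+su)}{t-s},
$$
so $\operatorname{Slope}(g)\subset \operatorname{Slope}(f)$, which is relatively compact by hypothesis. We then apply Lemma~\ref{lemma:RNP-compact-derivable}.(a) (its compact version) to write $g(t)=\int_0^t g'(s)\,ds$ with $g'\in L_\infty(\R,Y)$ and $\|g\|=\|g'\|_\infty$. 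Since $g$ strongly attains its norm at $(0,\|x-y\|)$, Lemma~\ref{lemma:SNAinLDA} yields $\|g(t)-g(s)\|=\|g\|\,|t-s|$ for $0\leq t,s\leq \|x-y\|$, and the same estimation as in the proof of Proposition~\ref{prop:inclusion}.(e) forces $\|g'(s_0)\|=\|g\|=\|f\|$ for almost every $s_0\in(0,\|x-y\|)$. Setting $x_0=y+s_0u$ gives $f'(x_0,u)=g'(s_0)$, so $f\in \D(X,Y)$, and combined with $f\in\Lipk(X,Y)$ we conclude $f\in \Dk(X,Y)$.

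For (b), we want $\Ak(X,Y)=\Lipk(X,Y)$, and the nontrivial direction is $\Lipk(X,Y)\subset \Ak(X,Y)$. Let $f\in \Lipk(X,Y)$. Then $\overline{\operatorname{Slope}(f)}$ is a compact subset of $Y$, and the continuous norm function attains its supremum on it. Hence there exists $z\in \overline{\operatorname{Slope}(f)}$ with
$$
\|z\|=\sup\bigl\{\|s\|\colon s\in \operatorname{Slope}(f)\bigr\}=\|f\|.
$$
Choosing a sequence $\{(x_n,y_n)\}\subset \widetilde{X}$ with $\frac{f(x_n)-f(y_n)}{\|x_n-y_n\|}\longrightarrow z$ shows $f\in \A(X,Y)$, and therefore $f\in \Ak(X,Y)$.

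I do not expect a real obstacle: the only delicate point is verifying that the auxiliary map $g(t)=f(y+tu)-f(y)$ inherits Lipschitz compactness from $f$, and this is transparent from the slope inclusion above. Everything else is a direct translation of the corresponding arguments from Proposition~\ref{prop:inclusion}, together with the elementary continuity/compactness argument for part (b).
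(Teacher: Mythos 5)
Your proposal is correct and follows essentially the same route as the paper: reducing (a) to the single nontrivial inclusion $\SAk(X,Y)\subset\Dk(X,Y)$, rerunning the proof of Proposition~\ref{prop:inclusion}.(e) via the compact branch of Lemma~\ref{lemma:RNP-compact-derivable}.(a), and proving (b) by the relative compactness of $\operatorname{Slope}(f)$. Your explicit verification that $g(t)=f(y+tu)-f(y)$ is Lipschitz compact via the inclusion $\operatorname{Slope}(g)\subset\operatorname{Slope}(f)$ is a detail the paper merely asserts, and your part (b) phrasing (the norm attains its maximum on the compact set $\overline{\operatorname{Slope}(f)}$) is just a cosmetic variant of the paper's subsequence extraction.
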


Note that all the inclusions in Proposition \ref{prop:inclusion-compact}.(a) can be proper as shown in Proposition \ref{prop:Xinfdim-DAneqA} and Example \ref{example:r}.

\begin{proof}
(a) Since all the inclusions but the first one are direct consequences of Proposition \ref{prop:inclusion}.(a), it remains to show that $\SAk(X,Y)\subset \Dk(X,Y)$. Indeed, we just follow the proof of Proposition \ref{prop:inclusion}.(e), taking into account that $g$ is Lipschitz compact, and we can also apply Lemma \ref{lemma:RNP-compact-derivable}.(a).

(b) Let $f \in \Lipk(X,Y)$ be given. Choose a sequence $\{ (x_n,y_n) \}_{n=1}^{\infty} \subset \widetilde{X}$ such that
$$
\frac{\| f(x_n)-f(y_n) \|}{\| x_n-y_n \|} \longrightarrow \| f \|.
$$
Note that $\left\{ \frac{f(x_n)-f(y_n)}{\| x_n-y_n \|} \colon  n \in \N \right\} \subset \operatorname{Slope}(f)$ is relatively compact in $Y$. So, passing to a subsequence, we can find $z \in Y$ such that
$$
\frac{f(x_n)-f(y_n)}{\| x_n-y_n \|} \longrightarrow z
$$
and, of course, $\| z \| = \| f \|$ by our election of the sequence $\{(x_n,y_n)\}_{n=1}^\infty \subset \widetilde{X}$.
\end{proof}

In particular, we get another characterization of the finite dimensionality of $X$.

\begin{cor}
Let $X$ be a Banach space. Then, the following are equivalent:
\begin{enumerate}
\item[\emph{(i)}] $X$ is finite-dimensional.
\item[\emph{(ii)}] $\DAk(X,Y) = \Lipk(X,Y)$ for every Banach space $Y$.
\item[\emph{(iii)}] There is a nontrivial Banach space $Y$ such that $\DAk(X,Y) = \Lipk(X,Y)$.
\end{enumerate}
\end{cor}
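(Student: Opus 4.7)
The plan is to prove the three implications (i) $\Rightarrow$ (ii) $\Rightarrow$ (iii) $\Rightarrow$ (i), where the first two are essentially automatic and the only real content is in the last one.

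For (i) $\Rightarrow$ (ii), I would simply chain together two results already established in the excerpt. If $\dim(X)<\infty$, Proposition~\ref{prop:inclusion}.(c) gives $\DA(X,Y)=\A(X,Y)$, and intersecting with $\Lipk(X,Y)$ yields $\DAk(X,Y)=\Ak(X,Y)$. Then Proposition~\ref{prop:inclusion-compact}.(b) says $\Ak(X,Y)=\Lipk(X,Y)$, and we are done. The implication (ii) $\Rightarrow$ (iii) is immediate from the existence of at least one nontrivial Banach space.

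The substantive step is (iii) $\Rightarrow$ (i), which I would establish by contrapositive: if $X$ is infinite-dimensional, I must exhibit, for every nontrivial Banach space $Y$, a map $f\in \Lipk(X,Y)\setminus \DAk(X,Y)$. For this I would simply reuse the construction from the proof of Proposition~\ref{prop:Xinfdim-DAneqA}: pick $T\in\Lin(X,c_0)$ that does not attain its norm (existing by \cite[Lemma 2.2]{M-M-P} when $X$ is infinite-dimensional), fix $y_0\in S_Y$, and set
$$
f(x)=\|T(x)\|\,y_0 \qquad (x\in X).
$$
The same argument given there already shows $f\in\A(X,Y)\setminus\DA(X,Y)$ with $\|f\|=\|T\|$, so in particular $f\notin \DAk(X,Y)$.

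The only thing left to verify—and this is the one small point that goes beyond what Proposition~\ref{prop:Xinfdim-DAneqA} already states—is that this $f$ is Lipschitz compact. But this is transparent: every element of $\operatorname{Slope}(f)$ has the form
$$
\frac{\|T(x)\|-\|T(y)\|}{\|x-y\|}\,y_0,
$$
so $\operatorname{Slope}(f)\subset[-\|T\|,\|T\|]\cdot y_0$, which is a compact subset of the one-dimensional subspace $\spann\{y_0\}\subset Y$. Hence $f\in\Lipk(X,Y)$, completing the proof. There is no real obstacle here; the corollary is essentially a repackaging of Propositions~\ref{prop:inclusion}.(c), \ref{prop:Xinfdim-DAneqA}, and \ref{prop:inclusion-compact}.(b), together with the observation that the counterexample of Proposition~\ref{prop:Xinfdim-DAneqA} happens to have one-dimensional range and therefore lies automatically in $\Lipk(X,Y)$.
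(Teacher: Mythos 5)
Your proof is correct and follows essentially the same route as the paper: (i)$\Rightarrow$(ii) via Propositions~\ref{prop:inclusion}.(c) and \ref{prop:inclusion-compact}.(b), and (iii)$\Rightarrow$(i) by reusing the counterexample $f(x)=\|T(x)\|y_0$ from the proof of Proposition~\ref{prop:Xinfdim-DAneqA}. The only difference is that where the paper merely asserts this $f$ is ``clearly compact,'' you spell out that $\operatorname{Slope}(f)\subset[-\|T\|,\|T\|]\cdot y_0$ lies in a one-dimensional subspace --- a welcome, if minor, addition.
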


\begin{proof}
(i)$\Rightarrow$(ii) is given by Propositions \ref{prop:inclusion}.(c) and \ref{prop:inclusion-compact}.(b). (ii)$\Rightarrow$(iii) is immediate. Finally, (iii)$\Rightarrow$(i) follows from the proof of Proposition \ref{prop:Xinfdim-DAneqA}, because the map $f$ defined there is clearly compact.
\end{proof}

\section{Some results on denseness of norm attaining Lipschitz maps} \label{chapter:positiveldak}
Our main results in this section deal with the denseness of Lipschitz maps defined on $\R$ attaining their norm through a derivative. We recall that $\SA(X,Y)$ is never dense in $\Lip(X,Y)$.

\begin{theorem}\label{theorem:Der-dense-RNP}
Let $Y$ be a Banach space. Then the following are equivalent:
\begin{enumerate}
\item[\emph{(i)}] $Y$ has the Radon-Nikod\'{y}m property.
\item[\emph{(ii)}] $\D(\R,Y)$ is dense in $\Lip(\R,Y)$.
\item[\emph{(iii)}] The set of all Lipschitz maps $f \in \Lip(\R,Y)$ such that
$$
f'(t) = \lim_{h \to 0} \frac{f(t+h)-f(t)}{h}
$$
exists for some $t \in \R$ is dense in $\Lip(\R,Y)$.
\end{enumerate}
\end{theorem}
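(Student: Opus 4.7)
My plan is to establish the cycle (i)$\Rightarrow$(ii)$\Rightarrow$(iii)$\Rightarrow$(i).

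The implication (i)$\Rightarrow$(ii) will be the constructive heart. Given $g\in\Lip(\R,Y)$ with $\|g\|=1$ and $\eps\in(0,1)$, I would use Lemma~\ref{lemma:RNP-compact-derivable}.(a) --- available since $Y$ has the RNP --- to write $g(t)=\int_{0}^{t}\varphi(s)\,ds$ with $\varphi\in L_\infty(\R,Y)$ and $\|\varphi\|_\infty=1$, and then force norm attainment on a set of positive measure by \emph{truncating} the norm of $\varphi$ at level $M:=1-\eps$:
$$
\tilde\varphi(s):=\begin{cases}\varphi(s)&\text{if }\|\varphi(s)\|\leq M,\\ M\,\varphi(s)/\|\varphi(s)\|&\text{if }\|\varphi(s)\|>M.\end{cases}
$$
Setting $f(t):=\int_{0}^{t}\tilde\varphi(s)\,ds$, the pointwise bound $\|\tilde\varphi-\varphi\|_\infty\leq\eps$ combined with Lemma~\ref{lemma:RNP-compact-derivable}.(b) yields $\|f-g\|\leq\eps$ and $\|f\|=M$. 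Since $\|\varphi\|_\infty=1>M$, the set $E:=\{s:\|\varphi(s)\|>M\}$ has positive Lebesgue measure and on it $\|\tilde\varphi(s)\|\equiv M$. A second application of Lemma~\ref{lemma:RNP-compact-derivable}.(a), now to $f$, gives $f'=\tilde\varphi$ almost everywhere, so at almost every $s_0\in E$ the derivative $f'(s_0)$ exists and satisfies $\|f'(s_0)\|=M=\|f\|$, which witnesses $f\in\D(\R,Y)$.

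The implication (ii)$\Rightarrow$(iii) is immediate, since every element of $\D(\R,Y)$ is, by definition, differentiable at some point.

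The remaining direction (iii)$\Rightarrow$(i) is the most delicate and I expect it to be the main obstacle. I would argue by contrapositive: assuming $Y$ lacks the RNP, I would produce $g\in\Lip(\R,Y)$ and $\delta>0$ such that the difference quotients of $g$ oscillate uniformly in the sense that
$$
\inf_{v\in Y}\,\limsup_{h\to 0}\,\left\|\frac{g(t+h)-g(t)}{h}-v\right\|\geq\delta\quad\text{for every } t\in\R.
$$
Granted such a $g$, the triangle inequality
$$
\left\|\frac{g(t_0+h)-g(t_0)}{h}-v\right\|\leq\|f-g\|+\left\|\frac{f(t_0+h)-f(t_0)}{h}-v\right\|
$$
rules out differentiability of any $f$ with $\|f-g\|<\delta$ at any $t_0$: if $f'(t_0)=v$ existed, the right-hand side would tend to a value strictly less than $\delta$, contradicting the oscillation bound on $g$. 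Hence $g$ sits at positive Lipschitz-distance from every differentiable-somewhere map, falsifying (iii). The technical heart of this step is the construction of such a $g$: the standard route is to start from a bounded $\delta$-bush in $Y$ witnessing the failure of the RNP (\`a la Rieffel, as developed in \cite{BenyaLinden}) and to integrate its piecewise-constant approximations on dyadic partitions of $\R$, arranging for the uniform (not merely almost-everywhere) lower bound on the difference-quotient oscillation to survive. Carrying out this uniformity \emph{at every} $t\in\R$, rather than just at typical points, is the pinch-point I anticipate.
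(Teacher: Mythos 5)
Your implications (i)$\Rightarrow$(ii) and (ii)$\Rightarrow$(iii) are correct and essentially the paper's own argument: the paper likewise represents the given map as $\int_0^t\varphi(s)\,ds$ with $\varphi\in L_\infty(\R,Y)$ via Lemma~\ref{lemma:RNP-compact-derivable}.(a), modifies $\varphi$ on the positive-measure set where $\|\varphi(\cdot)\|$ is within $\eps$ of $\|\varphi\|_\infty$, and then picks a point of that set, outside the null set where a.e.~differentiability or the identity $f'=\tilde\varphi$ fails, at which $\|f'\|=\|f\|$. The only difference is cosmetic: you truncate the norm \emph{down} to level $1-\eps$, producing an approximant of norm $1-\eps$, while the paper renormalizes \emph{up} to norm $1$ on the set $A_\eps=\{t\colon\|\varphi(t)\|>1-\eps\}$; both give $\|\tilde\varphi-\varphi\|_\infty\leq\eps$ and the same norm-attainment mechanism.

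The gap you flag in (iii)$\Rightarrow$(i) is real in your write-up, but there is nothing to construct: the statement you need is exactly the known characterization of the Radon-Nikod\'ym property by points of $\eps$-differentiability, \cite[Theorem 5.21]{BenyaLinden}, which the paper records as Lemma~\ref{lemma:RNP-diff}. Its negation says that if $Y$ fails the RNP, there exist $\eps>0$ and a Lipschitz $\psi\colon[0,1]\longrightarrow Y$ with \emph{no} point of $\eps$-differentiability; unwinding the definition, for every $t$, every $\delta>0$ and every $y\in Y$ there is $h$ with $|h|<\delta$ and $\|\psi(t+h)-\psi(t)-hy\|>\eps|h|$ --- precisely your uniform oscillation bound $\inf_{v}\limsup_{h\to0}\|\frac{\psi(t+h)-\psi(t)}{h}-v\|\geq\eps$ \emph{at every} point, which is the uniformity you anticipated as the pinch-point. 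The paper then passes from $[0,1]$ to $\R$ by the reflection $t\mapsto\psi(2-t)$ on $(1,2]$ followed by $2$-periodic extension (this preserves the property, including at the junction points, since $\eps$-differentiability at an endpoint transfers across the mirror), and runs exactly your triangle-inequality perturbation argument with threshold $\eps/2$: if $g$ with $\|g-f\|<\eps/2$ were differentiable at $t_0$, choose $\delta$ so the difference quotients of $g$ lie within $\eps/2$ of $g'(t_0)$ and apply the oscillation at $t_0$ with $y=g'(t_0)$ to reach a contradiction. So your reduction (a uniformly nowhere-$\eps$-differentiable map lies at Lipschitz distance at least $\eps/2$ from every somewhere-differentiable map) is identical to the paper's; redoing Rieffel's bush-integration with at-every-point uniformity would amount to reproving the cited theorem. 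Replacing your sketch by a citation of Lemma~\ref{lemma:RNP-diff} together with the periodic extension closes the gap and completes the proof.
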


To prove Theorem~\ref{theorem:Der-dense-RNP}, we need a preliminary lemma to proceed.

\begin{definition}\cite[Definition 5.19]{BenyaLinden}
Let $Y$ be a Banach space, let $I\subset \R$ be an interval, and let $\eps>0$ be given. A function $f\colon  I \longrightarrow  Y$ is said to be \emph{$\eps$-differentiable} at $t_0\in I$ if there are $\delta>0$ and $y \in Y$ such that
$$
\|f(t_0+h)-f(t_0)-hy\| \leq \eps|h|
$$
for every $h \in \R$ with $|h| < \delta$.
\end{definition}

\begin{lemma}\cite[Theorem 5.21]{BenyaLinden} \label{lemma:RNP-diff}
Let $Y$ be a Banach space. Then the following are equivalent:
\begin{itemize}
\item[\emph{(i)}] $Y$ has the Radon-Nikod\'ym property.
\item[\emph{(ii)}] Every Lipschitz map $\psi\colon [0,1] \longrightarrow  Y$ has a point of $\eps$-differentiability for every $\eps>0$.
\end{itemize}
\end{lemma}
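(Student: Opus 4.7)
The plan is to prove the two implications separately, relying on standard characterizations of the Radon-Nikod\'ym property for the heavy lifting, since this is essentially a classical result in vector-valued differentiation theory.

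For (i)$\Rightarrow$(ii), I would invoke the representation theorem for Lipschitz maps into RNP spaces that we already used in Lemma~\ref{lemma:RNP-compact-derivable}(a): given $\psi\in \Lip([0,1],Y)$, one extends $\psi$ to a Lipschitz map on $\R$ (by constant extension outside $[0,1]$) and considers $T_\psi\in \Lin(\F(\R),Y)\cong \Lin(L_1(\R),Y)$. Since $Y$ has RNP, $T_\psi$ is representable, so there exists $\varphi\in L_\infty([0,1],Y)$ with $\psi(t)=\psi(0)+\int_0^t \varphi(s)\,ds$. By the Lebesgue differentiation theorem for Bochner-integrable functions (which applies to every Lebesgue point of $\varphi$), $\psi$ is in fact differentiable a.e.\ with $\psi'=\varphi$ a.e., and any point of genuine differentiability is automatically a point of $\eps$-differentiability for every $\eps>0$.

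For (ii)$\Rightarrow$(i), which is the substantial direction, I would argue by contrapositive: assume $Y$ lacks RNP and construct a single $\eps_0>0$ and a single $\psi\in \Lip([0,1],Y)$ with no point of $\eps_0$-differentiability. The natural route is via the bush (or $\delta$-tree) characterization of non-RNP spaces: for some $\eps_0>0$ there is a bounded family $\{y_s\colon s\in \bigcup_n \{0,1\}^n\}\subset Y$ with $y_s=\tfrac{1}{2}(y_{s0}+y_{s1})$ and $\|y_{s0}-y_{s1}\|\geq \eps_0$. I would then define $\psi$ inductively on the dyadic intervals of $[0,1]$ so that the increment of $\psi$ across the dyadic interval $I_s$ of length $2^{-n}$ equals $2^{-n}y_s$ (or a controlled scalar multiple). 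The bush-martingale relation guarantees the construction is consistent and yields a Lipschitz map with $\|\psi\|\leq \sup_s\|y_s\|$. At any point $t_0\in[0,1]$, successive dyadic difference quotients at $t_0$ differ by at least $\eps_0/2$ (because the child vectors $y_{s0},y_{s1}$ differ by at least $\eps_0$), which rules out the existence of any single vector $y\in Y$ approximating the quotients to within $\eps_0/4$, hence $\psi$ is not $(\eps_0/4)$-differentiable at $t_0$.

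The main obstacle will be the bush construction in (ii)$\Rightarrow$(i): one must verify carefully that the piecewise-defined $\psi$ is genuinely Lipschitz (this is where the bounded bush hypothesis is used, together with a telescoping argument on dyadic intervals of different depths) and that the obstruction to $\eps_0$-differentiability is \emph{uniform} over all $t_0\in[0,1]$, including non-dyadic $t_0$ where two different branches of the tree meet. An alternative, perhaps cleaner, formalization is to use the vector-measure characterization of non-RNP: choose a $Y$-valued measure $\mu$ on $[0,1]$ of bounded variation, absolutely continuous with respect to Lebesgue measure, and not representable by any Bochner-integrable density, and set $\psi(t):=\mu([0,t])$. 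Non-representability is precisely the statement that no sub-interval of $[0,1]$ contains a point at which $\psi$ admits a norm-$\eps$-good linear approximation uniformly in scale, which translates into the failure of $\eps$-differentiability for the required uniform $\eps$. Either route leads to the same contrapositive conclusion and thus closes the equivalence.
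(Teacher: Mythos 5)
Note first that the paper does not prove this lemma at all: it is quoted verbatim from \cite[Theorem 5.21]{BenyaLinden}, so your attempt can only be measured against the classical proof there. Your direction (i)$\Rightarrow$(ii) is correct and consistent with the machinery of Lemma~\ref{lemma:RNP-compact-derivable}: representability of the associated operator on $L_1$ plus the Lebesgue differentiation theorem for Bochner integrals gives differentiability a.e., and genuine differentiability points are $\eps$-differentiability points for every $\eps$. The gap is in (ii)$\Rightarrow$(i), in both routes you propose. The structure you write down, $y_s=\tfrac12(y_{s0}+y_{s1})$ indexed by $\{0,1\}^n$ with $\|y_{s0}-y_{s1}\|\geq \eps_0$, is a bounded dyadic $\delta$-\emph{tree}, and failure of the Radon-Nikod\'ym property does \emph{not} produce one: non-RNP is equivalent to the existence of a bounded $\delta$-\emph{bush} (finitely many children with arbitrary convex weights, each child $\delta$-separated from its parent), and by a well-known example of Bourgain and Rosenthal there exist Banach spaces failing RNP that contain no bounded $\delta$-trees. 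So your dyadic construction cannot even start in general. It can be repaired with a bush --- subdivide each interval into finitely many subintervals with lengths proportional to the convex weights and let the slope on each piece be the corresponding child --- but then two details you defer become genuinely delicate: the separation available is between child and parent slopes rather than between siblings, and the lengths of the nested intervals along a branch need not tend to $0$ (the weights may accumulate to $1$), which affects both the convergence of the piecewise-linear approximants to a Lipschitz limit and your claim that the obstruction is uniform over all $t_0$.

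Your alternative route has the right starting object but asserts the key step instead of proving it. A small correction first: a $Y$-valued measure $\mu$ of bounded variation with $\mu\ll\lambda$ only gives an absolutely continuous $\psi$; to get $\psi$ Lipschitz you need $|\mu|(A)\leq C\lambda(A)$, i.e., you should take $\psi(t)=T(\chi_{[0,t]})$ for a non-representable $T\in \Lin(L_1[0,1],Y)$, which exists exactly when $Y$ fails RNP. More seriously, your sentence that non-representability ``is precisely the statement'' that no subinterval contains a point of uniformly good linear approximation is not a reformulation --- it is the entire content of the hard direction. The classical argument (and this is how \cite[Theorem~5.21]{BenyaLinden} proceeds) goes through the contrapositive: assuming (ii), one applies it to the restriction of $\psi$ to \emph{every} subinterval, obtains $\eps$-differentiability points in each, runs a Vitali-covering/exhaustion argument to manufacture step functions $\varphi_\eps$ whose indefinite integrals approximate the increments of $\psi$ to within $\eps$ patchwise, and lets $\eps\to 0$ to produce a density, proving $T$ representable. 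Without this covering-and-patching step, which is absent from your sketch, neither of your routes closes the implication, so the proof of (ii)$\Rightarrow$(i) has a genuine gap.
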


\begin{proof}[Proof of Theorem \ref{theorem:Der-dense-RNP}]
(i)$\Rightarrow$(ii). Let $\eps>0$ be given and fix $f\in \Lip(\R,Y)$ with $\|f\|=1$. By Lemma \ref{lemma:RNP-compact-derivable}.(a), there is $\varphi \in L_\infty(\R,Y)$ with $\|\varphi\|_{\infty}=1$ such that
\begin{equation}\label{eq:Der-dense-RNP-representation}
f(t)=\int_0^t \varphi(s)\,ds \qquad \text{for } t\in \R.
\end{equation}
Consider the set
\begin{equation}\label{eq:Der-dens-RNP-set-A}
A_\eps:=\{t\in \R\colon \|\varphi(t)\|>1-\eps\}
\end{equation}
and observe that $A_\eps$ has positive measure. Now,define
$\psi \in L_\infty(\R,Y)$ by
\begin{equation}\label{eq:Der-dens-RNP-def-psi}
\psi(t)=\begin{cases}
\frac{\varphi(t)}{\|\varphi(t)\|} & \text{if } t\in A_\eps, \\ \varphi(t) & \text{otherwise}.
\end{cases}
\end{equation}
It is immediate that $\|\varphi - \psi\|_\infty \leq \eps$. Therefore, defining $g:\R\longrightarrow Y$ by $g(t)=\int_0^t \psi(s)\,ds$ for every $t \in \R$, we obtain that $\|g\|=1$, $\|g-f\|<\eps$, and that $g' = \psi$ a.e.\ by Lemma \ref{lemma:RNP-compact-derivable}.(b).    Since $A_\eps$ has positive measure, there is $t_0\in A_\eps$ such that $g$ is differentiable at $t_0$ and $g'(t_0)=\psi(t_0)$. Further, $\|g'(t_0)\|=\|\psi(t_0)\|=1=\|g\|$, which shows that $g\in \D(\R,Y)$.

(ii)$\Rightarrow$(iii) is clear, because we can consider $t_0 \in \R$ at which $f \in \D(\R,Y)$ attains its norm through a derivative.

(iii)$\Rightarrow$(i). Suppose that $Y$ fails the Radon-Nikod\'ym property. Then, by Lemma \ref{lemma:RNP-diff}, there exist $\eps>0$ and a Lipschitz map $\psi\colon [0,1] \longrightarrow  Y$ such that $\psi$ has no point of $\eps$-differentiability on $[0,1]$. Let $f \in \Lip(\R,Y)$ be defined on $[0,2]$ by
\begin{displaymath}
f(t)=\left\{\begin{array}{@{}cl}
\displaystyle \phantom{.} \psi(t)-\psi(0) & \text{if } 0 \leq t \leq 1, \\
\displaystyle \phantom{.} \psi(2-t)-\psi(0) & \text{if } 1< t \leq 2
\end{array} \right.
\end{displaymath}
and extended $2$-periodic on $\R$. Then, it is easy to see from the definition that $f$ has no point of $\eps$-differentiability on $\R$. That is, for any given $t \in \R$, $\delta>0$ and $y \in Y$, there always exists $h \in \R$ with $|h|<\delta$ such that
\begin{equation} \label{equation:diff}
\|f(t+h)-f(t)-hy\|>\eps |h|.
\end{equation}
Let $g \in \Lip(\R,Y)$ be such that $\|g-f\|<\eps/2$ and
$$
g'(t_0)= \lim_{h\to 0} \frac{g(t_0+h)-g(t_0)}{h}
$$
exists for some $t_0 \in \R$. Choose $\delta>0$ so that
$$
\left\| \frac{g(t_0+h)-g(t_0)}{h} - g'(t_0) \right\| <\frac{\eps}{2}
$$
whenever $|h|<\delta$. By \eqref{equation:diff}, there exists $h \in \R$ with $|h|<\delta$ such that
$$
\|f(t_0+h)-f(t_0)-hg'(t_0)\|>\eps |h|.
$$
Hence we have that
\begin{align*}
\frac{\|(g-f)(t_0+h)-(g-f)(t_0)\|}{|h|} &\geq \left\| \frac{f(t_0+h)-f(t_0)}{h} - g'(t_0) \right\| - \left\| \frac{g(t_0+h)-g(t_0)}{h} - g'(t_0) \right\| \\
&> \eps - \frac{\eps}{2} = \frac{\eps}{2},
\end{align*}
which contradicts the fact that $\|g-f\|<\eps/2$.
\end{proof}

We may also prove the necessity of the Radon-Nikod\'{y}m property of the Banach space $Y$ for the denseness of $\D(X,Y)$ for a nontrivial Banach space $X$. However, we don't know if it can be a sufficient condition and even for $Y=\R$, we don't know any Banach space $X$ with $\dim(X)\geq 2$ such that $\D(X,\R)$ is dense in $\Lip(X,\R)$.

\begin{cor}\label{cor-Der(X,Y)dense=>Y-RNP}
Let $Y$ be a Banach space. If $\D(X,Y)$ is dense in $\Lip(X,Y)$ for a nontrivial Banach space $X$, then $Y$ has the Radon-Nikod\'{y}m property.
\end{cor}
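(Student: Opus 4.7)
The plan is to prove the contrapositive: if $Y$ fails the Radon-Nikod\'ym property, then $\D(X,Y)$ is not dense in $\Lip(X,Y)$ for any nontrivial Banach space $X$. The strategy is to lift a ``bad'' Lipschitz map $f\in \Lip(\R,Y)$ (one with no point of $\eps$-differentiability, whose existence is guaranteed by Lemma~\ref{lemma:RNP-diff} together with the reflection/$2$-periodic extension used in the proof of Theorem~\ref{theorem:Der-dense-RNP}) to a map $F\in \Lip(X,Y)$ via composition with a norm-one functional, and to show that $F$ is uniformly far from $\D(X,Y)$.

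First I would pick $x_0\in S_X$ and, by Hahn-Banach, $x^*\in S_{X^*}$ with $x^*(x_0)=1$, and define $F(x):=f(x^*(x))$. Restriction to the line $\R x_0$ gives $\|F\|=\|f\|=1$. Assume for contradiction that some $G\in \D(X,Y)$ satisfies $\|G-F\|<\eta$ for an $\eta$ to be chosen. Pick $x_1\in X$ and $e_1\in S_X$ with $G'(x_1,e_1)$ existing and $\|G'(x_1,e_1)\|=\|G\|$, and set $\alpha:=x^*(e_1)$. Along the line $t\mapsto x_1+te_1$ the map $F$ is exactly $t\mapsto f(x^*(x_1)+t\alpha)$ and so has Lipschitz constant $|\alpha|$. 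The triangle inequality then yields $\|G\|\leq |\alpha|+\|G-F\|$, while $\|G\|\geq \|F\|-\|G-F\|=1-\|G-F\|$, giving the key quantitative control
$$
|\alpha|\geq 1-2\|G-F\|>1-2\eta.
$$

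Next I would transfer the differentiability of $G$ back to $f$. Setting $h(t):=G(x_1+te_1)$ and $\tilde f(t):=F(x_1+te_1)$ and using that $t\mapsto x_1+te_1$ is $1$-Lipschitz, one has $\|h-\tilde f\|_{\Lip(\R,Y)}\leq \|G-F\|<\eta$. Full differentiability of $h$ at $0$ implies $h$ is $\mu$-differentiable at $0$ for every $\mu>0$, and a one-line triangle-inequality transfer shows $\tilde f$ is $(\eta+\mu)$-differentiable at $0$ with the same candidate derivative. The substitution $m=t\alpha$ converts this into $f$ being $\frac{\eta+\mu}{|\alpha|}$-differentiable at $x^*(x_1)$. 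Choosing $\eta<\frac{\eps}{1+2\eps}$ forces $\eta/|\alpha|<\eta/(1-2\eta)<\eps$, so for $\mu$ small enough we exhibit a point of $\eps$-differentiability of $f$, contradicting its construction. Hence $\dist(F,\D(X,Y))\geq \frac{\eps}{1+2\eps}>0$ and density fails.

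The hard part will be the lower bound $|\alpha|\geq 1-2\eta$ in step two. If the direction $e_1$ in which $G$ attains its norm had only a tiny component along $x^*$, then the change of variable $m=t\alpha$ would magnify the approximation error $\eta$ into a number possibly exceeding $\eps$, collapsing the contradiction. The saving observation is that $F$ is flat in every direction killed by $x^*$, and since $\|G\|$ is forced to be nearly $1$, the unit vector $e_1$ cannot have small $x^*$-component. Once this quantitative control is secured, the remainder is routine bookkeeping with the definition of $\eps$-differentiability.
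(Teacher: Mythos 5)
Your proposal is correct, and its skeleton coincides with the paper's: both prove the contrapositive by taking the nowhere $\eps$-differentiable map $f_0\in\Lip(\R,Y)$ from the proof of Theorem~\ref{theorem:Der-dense-RNP}, composing it with a norm-attaining norm-one functional $x^*$ to get $F(x)=f_0(x^*(x))$, and then, given a nearby $G\in\D(X,Y)$ attaining its norm through a derivative at $x_1$ in direction $e_1$, restricting everything to the line $t\mapsto x_1+te_1$ and pulling the approximate differentiability back to $f_0$ via the substitution $m=t\,x^*(e_1)$. The genuine difference is your quantitative step $|\alpha|\geq 1-2\|G-F\|$ (where $\alpha=x^*(e_1)$) together with the calibrated choice $\eta<\eps/(1+2\eps)$. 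The paper instead distinguishes the cases $x^*(u)=0$ and $x^*(u)\neq 0$ and, in the latter, passes from $\|f_1(t)-f_1(0)-tg'(x_0,u)\|<\eps|t|$ directly to "a contradiction" with the non-$\eps$-differentiability of $f_0$; but under the substitution $h=tx^*(u)$ this inequality only yields $\|f_0(s+h)-f_0(s)-hy\|<\eps|h|/|x^*(u)|$, a bound that exceeds $\eps|h|$ whenever $|x^*(u)|<1$, so the contradiction as written requires precisely the lower bound on $|x^*(u)|$ that you establish (and a working margin smaller than the paper's $\eps/2$). Thus your version buys three things: it subsumes the degenerate case $\alpha=0$ (automatic, since $|\alpha|>1-2\eta>0$), it gives an explicit uniform distance estimate $\dist(F,\D(X,Y))\geq\eps/(1+2\eps)$ rather than mere non-density, and it closes the quantitative gap in the published case (b). One point you should make fully explicit: the inequality $\|G\|\leq|\alpha|+\|G-F\|$ is legitimate only because $G\in\D(X,Y)$ attains its norm through the derivative along this very line, so that $\|G\|=\|h'(0)\|\leq\|h\|_{\Lip(\R,Y)}\leq\|\tilde f\|_{\Lip(\R,Y)}+\|G-F\|\leq|\alpha|+\|G-F\|$; for a general Lipschitz map the restriction to a single line gives no control on the global norm, so this is the place where membership in $\D(X,Y)$ is used and it deserves a displayed line rather than an appeal to "the triangle inequality."
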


\begin{proof}
Suppose that $Y$ fails the Radon-Nikod\'{y}m property. Define a Lipschitz map $f_0\in \Lip(\R,Y)$ with $\|f_0\|=1$ as it is done in the proof of (iii)$\Rightarrow$(i), Theorem \ref{theorem:Der-dense-RNP}. That is, there exists $0<\eps<1$ such that for any given $t\in \R$, $\delta>0$ and $y\in Y$, there exists $h\in \R$ with $|h|<\delta$ such that
\begin{equation} \label{equation:diff-2}
\|f_0(t+h)-f_0(t)-hy\|>\eps |h|.
\end{equation}
Pick $x^*\in \NA(X,\R)$ with $\|x^*\|=1$. Define $f\colon X\longrightarrow Y$ by
$$
f(x)=f_0(x^*(x)) \qquad \text{for } x\in X.
$$
It is routine to show that $f\in \Lip(X,Y)$ and that $\|f\|=\|f_0\|=1$. Suppose now that we can find $g\in\D(X,Y)$ with $\|g\|=1$ and $\|f - g\|<\eps/2$. By definition of $\D(X,Y)$, there are $x_0\in X$ and $u\in S_X$ such that $g'(x_0,u)$ exists and belongs to $S_Y$. Therefore, there exists $\delta>0$ such that
\begin{equation}\label{eq:coroDERdenseRNP-2}
\|g(x_0+tu)-g(x_0)-tg'(x_0,u)\|<\frac{\eps|t|}{2},~~ \text{whenever }~ |t|< \delta.
\end{equation}
We now define $f_1, g_1:\R\longrightarrow Y$ by
\begin{equation*}
f_1(t) = f(x_0+tu)= f_0(x^*(x_0) + tx^*(u)), \quad g_1(t)=g(x_0+tu)~ \text{ for }~t\in \R,
\end{equation*}
and observe that
\begin{equation}\label{eq:coroDERdenseRNP-3}
\|f_1 - g_1\|\leq \|f-g\|< \frac{\eps}{2}.
\end{equation}
We now have two possibilities:

\noindent (a). If $x^*(u)=0$, then $f_1$ is constant on $\R$, so $f_1'\equiv 0$. This contradicts the facts that $g_1'(0)=g'(x_0,u)\in S_Y$ and that $\|f_1-g_1\|<\eps/2<1$.

\noindent (b). If $x^*(u)\neq 0$, then it follows from \eqref{eq:coroDERdenseRNP-2} and \eqref{eq:coroDERdenseRNP-3} that
$$
\left\|f_0\bigl(x^*(x_0)+tx^*(u)\bigr) - f_0\bigl(x^*(x_0)\bigr) - tx^*(u)\frac{g'(x_0,u)}{x^*(u)}\right\| <\eps|t|~~  \text{for }~ |t|<\delta.
$$
But this enters into a contradiction with \eqref{equation:diff-2}.
\end{proof}

If one deals with Lipschitz compact maps, then the proof of (i)$\Rightarrow$(ii) of Theorem \ref{theorem:Der-dense-RNP} can be repeated without any assumption on $Y$, getting the second main result of this section.

\begin{theorem}\label{theorem:Dkalwaysdense}
Let $Y$ be a Banach space. Then $\Dk(\R,Y)$ is dense in $\Lipk(\R,Y)$.
\end{theorem}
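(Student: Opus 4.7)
The plan is to adapt the proof of implication (i)$\Rightarrow$(ii) of Theorem \ref{theorem:Der-dense-RNP} essentially verbatim, with a single additional step: verifying that the approximating function remains Lipschitz compact. Without loss of generality, fix $f\in \Lipk(\R,Y)$ with $\|f\|=1$ and $0<\eps<1$. Applying Lemma \ref{lemma:RNP-compact-derivable}.(a) via the ``Lipschitz compact'' branch of the hypothesis (rather than the RNP branch that was used in Theorem \ref{theorem:Der-dense-RNP}), I obtain $\varphi\in L_\infty(\R,Y)$ with $\|\varphi\|_\infty=1$ and $f(t)=\int_0^t \varphi(s)\,ds$. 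The ``moreover'' half of Lemma \ref{lemma:RNP-compact-derivable}.(b), applied in reverse to $f$, additionally provides a compact set $K\subset Y$ such that $\varphi(t)\in K$ for a.e.\ $t\in\R$.

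Exactly as in the original argument, I would then set $A_\eps:=\{t\in\R:\|\varphi(t)\|>1-\eps\}$, which has positive measure since $\|\varphi\|_\infty=1$, and define
$$
\psi(t):=\begin{cases}\varphi(t)/\|\varphi(t)\| & \text{if } t\in A_\eps,\\ \varphi(t) & \text{otherwise,}\end{cases}\qquad g(t):=\int_0^t \psi(s)\,ds.
$$
The routine estimates $\|\psi-\varphi\|_\infty\leq \eps$ and $\|\psi\|_\infty=1$ give $\|g-f\|\leq \eps$ and $\|g\|=1$.

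The new ingredient is the verification that $g\in \Lipk(\R,Y)$. Because $1-\eps>0$, the set $K_\eps:=K\cap\{y\in Y:\|y\|\geq 1-\eps\}$ is a compact subset of $Y\setminus\{0\}$, so the normalization map $y\longmapsto y/\|y\|$ is continuous on $K_\eps$ and its image is again compact. Consequently $\psi$ takes values a.e.\ in the compact set $K\cup\{y/\|y\|: y\in K_\eps\}$, and Lemma \ref{lemma:RNP-compact-derivable}.(b) then yields $g\in\Lipk(\R,Y)$ together with $g'=\psi$ almost everywhere. Picking any $t_0\in A_\eps$ at which $g$ is differentiable finally gives $\|g'(t_0)\|=\|\psi(t_0)\|=1=\|g\|$, so $g\in\Dk(\R,Y)$, which completes the approximation.

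I expect the compactness verification in the third paragraph to be the only genuinely new point beyond Theorem~\ref{theorem:Der-dense-RNP}: one must check that normalizing $\varphi$ on the ``almost-maximizing'' set $A_\eps$ does not spoil the essential relative compactness of its range. The strict bound $1-\eps>0$ is precisely what keeps normalization continuous on $K_\eps$ and lets this step go through cleanly; no additional assumption on $Y$ is needed.
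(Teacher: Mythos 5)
Your proposal is correct and follows essentially the same route as the paper: the paper's proof likewise repeats the argument of (i)$\Rightarrow$(ii) of Theorem~\ref{theorem:Der-dense-RNP}, invoking the Lipschitz compact branch of Lemma~\ref{lemma:RNP-compact-derivable}.(a) for the integral representation and Lemma~\ref{lemma:RNP-compact-derivable}.(b) to conclude $g\in\Lipk(\R,Y)$. Your third paragraph merely spells out the relative compactness of the range of $\psi$ (via continuity of normalization on $K_\eps$, using $1-\eps>0$), a point the paper asserts in one line without detail.
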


\begin{proof}
We just repeat the proof of (i)$\Rightarrow$(ii) in Theorem \ref{theorem:Der-dense-RNP}, taking into account that if $f\in \Lipk(\R,Y)$, then it is representable by an integral as in \eqref{eq:Der-dense-RNP-representation} by Lemma \ref{lemma:RNP-compact-derivable}.(a). Finally, the Lipschitz map $g$ obtained in the proof is Lipschitz compact by Lemma \ref{lemma:RNP-compact-derivable}.(b), because $\psi$ defined in \eqref{eq:Der-dens-RNP-def-psi} has a relatively compact range a.e.\ from $f\in \Lipk(\R.Y)$.
\end{proof}

As we already commented, it is not true that $A(X,Y)$ is always dense in $\Lip(X,Y)$ \cite{Gode2} (see \cite[p.~109]{Gode1}). For the sake of completeness, we include here a short justification of this example.

\begin{example}[\textrm{\cite{Gode2}}]\label{example:Godefroy-negative_A}
{\slshape Let $X=c_0$ and let $Y$ be an equivalent renorming of $c_0$ with the Kadec-Klee property} ({\slshape which it is well-known that exists}){\slshape. Then, $\A(X,Y)$ is not dense in $\Lip(X,Y)$.}

In fact, it is shown in \cite[Corollary 3.5]{Gode2} that no Lipschitz isomorphism from $X$ onto $Y$ belongs to $\A(X,Y)$ (as $c_0$ is asymptotically uniformly flat, see \cite[Definition 2.1]{Gode2}). As the identity map belongs to the set of Lipschitz isomorphism from $X$ onto $Y$, it is enough to prove that this set is open. This is surely well-known to experts, but we would like to include an easy argument which has been given to us by G.~Lancien. Assume that $f$ is a Lipschitz isomorphism from $X$ onto $Y$. Consider $g \in \Lip(X,Y)$. If $\|g\|$ is small enough, then $f-g$ is trivially bi-Lipschitz from $X$ onto its image. Thus, the only thing we have to show is that $f-g$ is surjective by solving the equation $y=f(x)-g(x)$ for any $y \in Y$. This is possible again, provided that $\|g\|$ is small enough, applying the Banach fixed point theorem on the contraction map $f^{-1}\bigl(y+g(\cdot)\bigr)\colon  X \longrightarrow  X$.
\end{example}

In the last part of this section, we show some other results on the denseness of norm attaining Lipschitz maps.

\begin{prop}\label{prop:axy2}
Let $X$ be any Banach space and let $Y$ be a uniformly convex Banach space. If $T_f \in \NA(\F(X),Y)$, then $f \in \A(X,Y)$.
\end{prop}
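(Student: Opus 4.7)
We may assume $\|f\|=1$ (the case $f=0$ is trivial). By hypothesis there exists $\mu\in S_{\F(X)}$ with $T_f\mu = z$ and $\|z\|=\|T_f\|=1$. Pick $y^*\in S_{Y^*}$ with $y^*(z)=1$. The strategy is to approximate $\mu$ by convex combinations of molecules and then use uniform convexity of $Y$ to extract a \emph{single} molecule whose image under $T_f$ is close to $z$; ranging over a decreasing sequence of tolerances produces the desired sequence in $\widetilde{X}$.

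By Lemma~\ref{Lemm:elementarypropertiesF(X)}(c) we have $\mu\in B_{\F(X)}=\overline{\co}(\Mol(X))$, so there exist convex combinations
$$
\nu_n=\sum_{i=1}^{N_n}\lambda_i^{(n)}m_i^{(n)} \quad\text{with } m_i^{(n)}=\frac{\delta_{x_i^{(n)}}-\delta_{y_i^{(n)}}}{\|x_i^{(n)}-y_i^{(n)}\|}\in \Mol(X)
$$
satisfying $\nu_n\to \mu$ in $\F(X)$. By continuity of $T_f$ and $y^*$, we have $y^*(T_f\nu_n)\longrightarrow y^*(z)=1$.

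Now fix $\eps>0$ and set $\delta=\min\{1/2,\,4\delta_Y(\eps)^2\}$, where $\delta_Y$ is the modulus of convexity of $Y$. For $n$ large enough we have $y^*(T_f\nu_n)>1-\delta$, that is
$$
\sum_{i=1}^{N_n}\lambda_i^{(n)}\,y^*\bigl(T_f m_i^{(n)}\bigr) > 1-\delta.
$$
Since $y^*(T_f m_i^{(n)})\leq \|T_f m_i^{(n)}\|\leq 1$ for every $i$ and $\sqrt{\delta}>\delta$ on $(0,1)$, a standard Markov-type argument (if all terms were $\leq 1-\sqrt\delta$, the whole sum would be $\leq 1-\sqrt\delta<1-\delta$) yields an index $i_n$ with
$$
y^*\bigl(T_f m_{i_n}^{(n)}\bigr)>1-\sqrt{\delta}.
$$

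Write $v_n:=T_f m_{i_n}^{(n)}\in B_Y$. Then
$$
\left\|\frac{v_n+z}{2}\right\|\geq y^*\!\left(\frac{v_n+z}{2}\right)>1-\frac{\sqrt\delta}{2}\geq 1-\delta_Y(\eps),
$$
so by uniform convexity $\|v_n-z\|<\eps$. Choosing $\eps=1/k$ and selecting, for each $k$, such an index at a sufficiently large stage $n(k)$ produces molecules $m_k=(\delta_{u_k}-\delta_{w_k})/\|u_k-w_k\|$ with $(u_k,w_k)\in \widetilde{X}$ and
$$
\frac{f(u_k)-f(w_k)}{\|u_k-w_k\|}=T_f m_k \longrightarrow z,\qquad \|z\|=\|f\|,
$$
which is exactly the definition of $f\in \A(X,Y)$.

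The one place requiring care is the extraction step: a priori the extreme-like direction $\mu$ may not be a molecule itself (or even a finite convex combination of them), so we cannot simply read off a pair $(x,y)\in \widetilde{X}$ from $\mu$. Uniform convexity of $Y$ is exactly what forces the convex-combination images $T_f\nu_n$ to be concentrated around $z$, and this is the only nontrivial ingredient of the argument.
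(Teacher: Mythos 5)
Your proof is correct and takes essentially the same route as the paper's: approximate the norming element of $S_{\F(X)}$ by convex combinations of molecules, extract a single molecule with large $y^*$-value, and use uniform convexity of $Y$ to force its image close to $z$ — where the paper cites the slice-diameter estimate of Lemma~\ref{lem:axy1} for the set $\operatorname{S}(B_Y,y^*,\delta_Y(\eps))$, you simply re-prove that estimate inline via the midpoint computation $\bigl\|\frac{v_n+z}{2}\bigr\|>1-\delta_Y(\eps)$. The only cosmetic difference is your $\sqrt{\delta}$ Markov step, which is superfluous (a convex combination exceeding $1-\delta$ with terms bounded by $1$ already forces some term to exceed $1-\delta$) but harmless.
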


It is easy to see that the reversed result to the above one does not hold: just consider $Y=\R$ and an arbitrary Banach space $X$. Then $A(X,\R)=\Lip(X,\R)$ by Proposition~\ref{prop:inclusion}.(d), but $\NA(\F(X),\R)\neq \Lin(\F(X),\R)$, because $\F(X)$ is not reflexive as it contains $\ell_1$. To prove Proposition~\ref{prop:axy2}, we need the following result from \cite{AcoBerGarKimMae1}. Recall that if $X$ is a Banach space, for given  $x^*\in S_{X^*}$ and $\delta > 0$, the corresponding \emph{slice} of $B_X$ is defined as  $$\operatorname{S}(B_X,x^*,\delta):=\{x\in B_X \colon  x^*(x)> 1-\delta\}.$$

\begin{lemma}[\mbox{\rm \cite[Lemma 2.1]{AcoBerGarKimMae1}}]\label{lem:axy1}
Let $Y$ be a uniformly convex Banach space. Then, for every $\eps > 0$ and $y^* \in S_{Y^*}$, we have
$$
\diam \bigl(\operatorname{S}(B_Y,y^*,\delta_Y(\eps)) \bigr)\leq \eps.
$$
\end{lemma}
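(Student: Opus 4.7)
The plan is a straightforward contradiction argument directly from the definition of the modulus of convexity $\delta_Y(\eps)$. Pick any two points $y, z$ in the slice $\operatorname{S}(B_Y, y^*, \delta_Y(\eps))$; thus $y, z \in B_Y$ with $y^*(y), y^*(z) > 1 - \delta_Y(\eps)$. The goal is to bound $\|y - z\|$ by $\eps$, from which the diameter estimate follows by taking a supremum over $y, z$.

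First I will average the two slice inequalities to get $y^*\!\bigl(\tfrac{y+z}{2}\bigr) > 1 - \delta_Y(\eps)$, and since $\|y^*\| = 1$ this forces $\bigl\|\tfrac{y+z}{2}\bigr\| > 1 - \delta_Y(\eps)$. Next, I suppose toward a contradiction that $\|y - z\| > \eps$. Then in particular $\|y - z\| \geq \eps$ with $y, z \in B_Y$, so by the very definition of $\delta_Y(\eps)$ as the best constant for uniform convexity we obtain $\bigl\|\tfrac{y+z}{2}\bigr\| \leq 1 - \delta_Y(\eps)$, contradicting the lower bound just established. Hence $\|y - z\| \leq \eps$, as required.

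There is no substantive obstacle here: the lemma is essentially a one-line reformulation of the definition of $\delta_Y(\eps)$, combined with the elementary observation that the slice condition on $y^*$ forces the midpoint of any two slice points to have norm exceeding $1 - \delta_Y(\eps)$. The only delicate point worth flagging is the strict versus non-strict inequality in the modulus — the definition uses $\|y - z\| \geq \eps$, which is precisely what lets the assumption $\|y - z\| > \eps$ trigger the contradiction and give the non-strict diameter bound in the conclusion.
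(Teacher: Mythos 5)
Your argument is correct and is precisely the standard deduction from the definition of the modulus of convexity; note that the paper itself does not prove this lemma but imports it from \cite[Lemma 2.1]{AcoBerGarKimMae1}, and your averaging-plus-contradiction argument is the one behind that citation. The only point you use implicitly that deserves a word is that the supremal constant $\delta_Y(\eps)$ (the ``best possible'' $\delta$) is itself admissible in the implication $\|y-z\|\geq\eps \Rightarrow \bigl\|\tfrac{y+z}{2}\bigr\|\leq 1-\delta_Y(\eps)$ --- this follows by passing to the limit along admissible values of $\delta$ --- after which your contradiction step, and the resulting non-strict bound $\diam\bigl(\operatorname{S}(B_Y,y^*,\delta_Y(\eps))\bigr)\leq\eps$, go through exactly as you wrote.
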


\begin{proof}[Proof of Proposition \ref{prop:axy2}]
Let $\| T_f(w) \| = \| T_f \|$ for $w \in S_{\F(X)}$. Since $B_{\F(X)} = \overline{\co}(\Mol(X))$ by Lemma \ref{Lemm:elementarypropertiesF(X)}.(c), there exists a sequence $\{w_n\}_{n=1}^{\infty} \subset \co(\Mol(X))$ converging to $w$. By the uniform convexity of $Y$, we can find a sequence $\{ u_n \}_{n=1}^{\infty} \subset \Mol(X)$ such that
$$
\| T_f(u_n)-T_f(w) \| \leq \frac{1}{n} \quad \text{for each } n \in \N.
$$
Indeed, assume $\| T_f(w) \| = 1$ and let $y^* \in S_{Y^*}$ be such that $y^*(T_f(w)) = 1$. Suppose that $\| T_f(w_n) - T_f(w) \| < \delta_Y(n^{-1})$, where $\delta_Y$ is the modulus of convexity of $Y$ and $w_n=\sum_{j=1}^{k(n)} \alpha_j v_{n,j}$ is a convex combination of $v_{n,j} \in \Mol(X)$ for $1 \leq j \leq k(n)$. Then, there exists some $j$ such that $T_f(v_{n,j}) \in \operatorname{S}(B_Y,y^*,\delta_Y(n^{-1}))$, because $y^*(T_f(w_n)) > 1 - \delta_Y(n^{-1})$. By Lemma \ref{lem:axy1}, we have
$$
\diam \bigl(\operatorname{S}(B_Y,y^*,\delta_Y(n^{-1}))\bigr) \leq \frac{1}{n},
$$
which implies that
$$
\| T_f(u_n)-T_f(w) \| \leq \frac{1}{n}
$$
if we let $u_n=v_{n,j}$. Note that each $u_n$ is of the form $\frac{\delta(x_n)-\delta(y_n)}{\| x_n-y_n \|} \in M$, thus we can deduce that
\[
\frac{f(x_n)-f(y_n)}{\| x_n-y_n \|} \longrightarrow T_f(w) \quad \text{with } \| T_f(w) \| = \| T_f \| = \| f \|.\qedhere
\]
\end{proof}

The following results are straightforward consequences of Proposition \ref{prop:axy2}.

\begin{cor}\label{cor:aunifcvx}
Let $X$ and $Y$ be Banach spaces such that $Y$ is uniformly convex and $\NA(\F(X),Y)$ is dense in $\Lin(\F(X),Y)$. Then, $\A(X,Y)$ is dense in $\Lip(X,Y)$.
\end{cor}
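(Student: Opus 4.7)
The plan is to combine the isometric correspondence between $\Lip(X,Y)$ and $\Lin(\F(X),Y)$ from Lemma~\ref{Lemm:elementarypropertiesF(X)}.(a) with Proposition~\ref{prop:axy2}, so that the statement becomes an almost immediate transfer of the hypothesized density of $\NA(\F(X),Y)$ in $\Lin(\F(X),Y)$ to the Lipschitz setting.

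First, fix $f \in \Lip(X,Y)$ and $\eps>0$. By Lemma~\ref{Lemm:elementarypropertiesF(X)}.(a), the assignment $f \longmapsto T_f$ is an isometric isomorphism between $\Lip(X,Y)$ and $\Lin(\F(X),Y)$, and in particular $T_f \in \Lin(\F(X),Y)$ with $\|T_f\|=\|f\|$. By the density hypothesis, I can choose $S \in \NA(\F(X),Y)$ with $\|S - T_f\| < \eps$. Since the correspondence is surjective, there is a (unique) $g \in \Lip(X,Y)$ with $T_g = S$, and, again by the isometric character of the correspondence,
$$
\|g - f\| = \|T_g - T_f\| = \|S - T_f\| < \eps.
$$

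Next, I would invoke Proposition~\ref{prop:axy2}: since $Y$ is uniformly convex and $T_g = S \in \NA(\F(X),Y)$, we get $g \in \A(X,Y)$. Because $f$ and $\eps$ were arbitrary, this shows that $\A(X,Y)$ is dense in $\Lip(X,Y)$, as required.

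There is essentially no obstacle here; the statement is a formal corollary once the dictionary $f \leftrightarrow T_f$ is in place and Proposition~\ref{prop:axy2} has been proved. The only slightly delicate point is checking that we are allowed to identify the approximation in $\Lin(\F(X),Y)$ with one in $\Lip(X,Y)$ with the same norm distance, which is precisely what Lemma~\ref{Lemm:elementarypropertiesF(X)}.(a) provides.
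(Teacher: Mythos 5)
Your proof is correct and is exactly the argument the paper intends: the corollary is stated there as a straightforward consequence of Proposition~\ref{prop:axy2}, obtained by transferring the density of $\NA(\F(X),Y)$ through the isometric identification $f \leftrightarrow T_f$ of Lemma~\ref{Lemm:elementarypropertiesF(X)}.(a), which is precisely what you spell out.
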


\begin{cor}\label{cor:daunifcvx}
Let $X$ be a finite-dimensional Banach space and let $Y$ be an uniformly convex Banach space. Suppose that $\NA(\F(X),Y)$ is dense in $\Lin(\F(X),Y)$. Then, $\DA(X,Y)$ is dense in $\Lip(X,Y)$.
\end{cor}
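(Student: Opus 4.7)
The plan is essentially to combine two results already in the paper. By Corollary~\ref{cor:aunifcvx}, the hypotheses that $Y$ is uniformly convex and that $\NA(\F(X),Y)$ is dense in $\Lin(\F(X),Y)$ already imply that $\A(X,Y)$ is dense in $\Lip(X,Y)$. So the only extra ingredient needed is that, under the additional finite-dimensionality of $X$, the sets $\A(X,Y)$ and $\DA(X,Y)$ actually coincide.

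That coincidence is exactly Proposition~\ref{prop:inclusion}.(c): when $\dim(X)<\infty$, the unit sphere $S_X$ is compact, so any sequence $\{(x_n,y_n)\}\subseteq \widetilde{X}$ witnessing that $f\in \A(X,Y)$ attains its norm toward some $z\in Y$ admits a subsequence for which the unit vectors $\frac{x_n-y_n}{\|x_n-y_n\|}$ converge to some $u\in S_X$; this subsequence then witnesses $f\in \DA(X,Y)$. Hence $\A(X,Y)=\DA(X,Y)$.

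So I would just write: given any $f\in \Lip(X,Y)$ and $\eps>0$, use Corollary~\ref{cor:aunifcvx} to find $g\in \A(X,Y)$ with $\|g-f\|<\eps$, and then invoke Proposition~\ref{prop:inclusion}.(c) to conclude $g\in \DA(X,Y)$. This gives the density of $\DA(X,Y)$ in $\Lip(X,Y)$.

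There is no real obstacle here: the finite dimensionality of $X$ is used only through the compactness of $S_X$ (already exploited in Proposition~\ref{prop:inclusion}.(c)), and everything else is a direct application of Corollary~\ref{cor:aunifcvx}. The proof will be only two or three lines long.
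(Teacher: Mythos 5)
Your proof is correct and is exactly the argument the paper intends: the paper derives this corollary as a straightforward consequence of Proposition~\ref{prop:axy2} via Corollary~\ref{cor:aunifcvx}, with the finite-dimensionality of $X$ entering only through Proposition~\ref{prop:inclusion}.(c) to upgrade $\A(X,Y)$ to $\DA(X,Y)$. Nothing further is needed.
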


\section{Local Directional Bishop-Phelps-Bollob\'{a}s property for Lipschitz Maps} \label{chapter:dlbpbp}

We would like to deal now with the $\LDBPBp$, trying to extend some results of \cite{KadMarSol} from the scalar-valued case to the vector-valued case. Our main result in this section is the following.

\begin{theorem}\label{theorem:unifcvx}
Let $X$ and $Y$ be Banach spaces such that $X$ is uniformly convex and $(\F(X),Y)$ has the $\BPBp$ for compact operators. Then, the pair $(X,Y)$ has the $\LDBPBp$ for Lipschitz compact maps.

In fact, we have something more: for every $\eps>0$, there exists $\eta >0$ such that for any positive function $\rho \colon   \widetilde{X} \longrightarrow \R$ and whenever $f \in S_{\Lipk(X,Y)}$  and $(x,y) \in \widetilde{X}$ satisfy
$$
\frac{\| f(x)-f(y) \|}{\| x-y \| } > 1 - \eta,
$$
there exist $g \in S_{\Lipk(X,Y)}$, $z \in S_Y$, $u \in S_X$ and $\bar{x} \in X$ such that $g$ attains its norm locally directionally at the point $\bar{x}$ in the direction $u$ toward $z$, $\| g -f \| < \eps$, $\bigl\| u - \frac{x-y}{\| x - y \|} \bigr\| < \eps$ and $\dist (\bar{x}, [x,y] ) < \eps \rho (x,y)$.

\end{theorem}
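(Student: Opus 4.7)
The proof will proceed by a three-stage reduction. First, I transfer the problem to the Lipschitz-free space via Lemma~\ref{Lemm:elementarypropertiesF(X)}: $f\in\Lipk(X,Y)$ corresponds to a compact operator $T_f\in\K(\F(X),Y)$ of the same norm, with the slope $(f(x)-f(y))/\|x-y\|$ realized as $T_f(m_{x,y})$, where $m_{x,y}:=(\delta_x-\delta_y)/\|x-y\|\in\Mol(X)\cap S_{\F(X)}$. Second, apply the assumed $\BPBp$ for compact operators on $(\F(X),Y)$ to the pair $(T_f,m_{x,y})$ in order to perturb $T_f$ to an operator attaining its norm at an element close to the molecule. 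Third, compose with a suitable norming dual functional to reduce to the scalar-valued setting, invoke the scalar $\LDBPBp$ of \cite[Theorem~5.3]{KadMarSol} for uniformly convex $X$ in a quantitative $\rho$-refined form, and lift the resulting scalar perturbation back to $Y$ by a one-dimensional correction.

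Given $\eps>0$, I fix auxiliary $\eps_1,\eps_2\ll\eps$ and then choose $\eta>0$ small enough to trigger both the $\BPBp$ for compact operators on $(\F(X),Y)$ with tolerance $\eps_1$ and the scalar $\LDBPBp$ with tolerance $\eps_2$. For $f\in S_{\Lipk(X,Y)}$ with slope $>1-\eta$ at $(x,y)$, the first application yields $S\in S_{\K(\F(X),Y)}$ and $w\in S_{\F(X)}$ with $\|Sw\|=1$, $\|S-T_f\|<\eps_1$, and $\|w-m_{x,y}\|<\eps_1$; let $g_1\in S_{\Lipk(X,Y)}$ be the Lipschitz compact map with $T_{g_1}=S$. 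Pick $y^*\in S_{Y^*}$ norming $Sw$ and, after a Bishop--Phelps perturbation of $y^*$ absorbed into $\eps_1$, assume $y^*$ attains its norm at some $y_0\in S_Y$. Setting $\phi:=y^*\circ g_1\in S_{\Lip(X,\R)}$, the identity $T_\phi=y^*\circ S$ gives $T_\phi(w)=1$, whence $|T_\phi(m_{x,y})|>1-\eps_1$, placing $\phi$ inside the hypothesis of the scalar $\LDBPBp$ at $(x,y)$.

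Invoking \cite[Theorem~5.3]{KadMarSol} in a quantitative form producing the $\rho$-localization, I obtain $\psi\in S_{\Lip(X,\R)}$, $\bar x\in X$ and $u\in S_X$ such that $\psi$ attains its norm locally directionally at $\bar x$ in direction $u$, with $\|\psi-\phi\|<\eps_2$, $\bigl\|u-\tfrac{x-y}{\|x-y\|}\bigr\|<\eps$, and $\dist(\bar x,[x,y])<\eps\,\rho(x,y)$. Lift by defining
\[
g:=g_1+(\psi-\phi)\,y_0.
\]
Because $(\psi-\phi)y_0$ has one-dimensional image, $g\in\Lipk(X,Y)$; moreover $\|g-g_1\|\le\eps_2$, $y^*\circ g=\psi$, and $1\le\|g\|\le1+\eps_2$. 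After renormalizing to $g':=g/\|g\|\in S_{\Lipk(X,Y)}$, one has $\|g'-f\|<\eps$. Pick a witnessing sequence $\{(a_n,b_n)\}\subset\widetilde{X}$ for the local directional attainment of $\psi$; by the compactness of $T_{g'}$, extract a subsequence with $(g'(a_n)-g'(b_n))/\|a_n-b_n\|\to z\in Y$. Since $y^*\circ g=\psi$, one computes $y^*(z)=1/\|g\|$, so $\|z\|\ge1/\|g\|$. A closing argument---arranging the perturbation so that its largest slope is realized along the $\psi$-sequence (e.g.\ by scaling $(\psi-\phi)y_0$ so that $\|g\|=1$ exactly)---yields $\|z\|=1=\|g'\|$, and $g'$ attains its norm locally directionally at $\bar x$ in direction $u$ toward $z$.

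The main obstacle is the $\rho$-refined form of the scalar $\LDBPBp$: \cite[Theorem~5.3]{KadMarSol} is stated in the unweighted form, and obtaining $\dist(\bar x,[x,y])<\eps\,\rho(x,y)$ for arbitrary positive $\rho$ requires a careful reworking of their construction---which hinges on the modulus of convexity $\delta_X$---to confine the perturbation patch to the scale prescribed by $\rho$. A secondary technical issue is the final norm equality $\|z\|=\|g'\|$: the construction as stated gives only $\|z\|\ge1/\|g\|\ge1-\eps_2$, so a short additional argument is needed to ensure the $\psi$-witnessing sequence actually realizes the full norm of $g'$, either by an adaptive rescaling of $(\psi-\phi)y_0$ so that $\|g\|=1$ from the outset, or by a direct compactness-based extraction combined with the norming property of $y^*$.
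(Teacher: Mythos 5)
There is a genuine gap at the decisive step of your reduction-to-scalar scheme: the exact norm equality $\|z\|=\|g'\|$. After setting $g:=g_1+(\psi-\phi)\,y_0$, the scalar information controls only the one-dimensional shadow $y^*\circ g=\psi$. Along the witnessing sequence, the slopes $s_n$ of $g$ satisfy $y^*(s_n)\to 1$, so any compact limit $z$ has $\|z\|\geq 1$; but $\|g\|$ can be anywhere in $[1,1+\eps_2]$, and nothing forces the supremum of the slopes of $g$ to be approached along that particular sequence. Since local directional attainment is an \emph{exact} condition, an $\eps_2$-sized deficit is fatal and cannot be absorbed into the estimates. Both of your proposed repairs fail: rescaling the correction to $c(\psi-\phi)y_0$ with $c\neq 1$ destroys the identity $y^*\circ g=\psi$ (the composition becomes $\phi+c(\psi-\phi)$, which need not attain its norm locally directionally, since the behaviour of $\phi$ near $\bar{x}$ is not controlled), and normalizing $g':=g/\|g\|$ yields $y^*\circ g'=\psi/\|g\|$, whose local directional limit has $y^*$-value $1/\|g\|<1=\|g'\|$ whenever $\|g\|>1$ --- again strictly below the norm. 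In addition, the other obstacle you flagged is real and unresolved: \cite[Theorem~5.3]{KadMarSol} only gives $\dist(\bar{x},[x,y])<\eps$, and the $\rho$-refined localization $\dist(\bar{x},[x,y])<\eps\,\rho(x,y)$ for an \emph{arbitrary} positive $\rho$ does not follow from the unweighted statement; you acknowledge this but supply no argument, so even the weaker (unweighted) claim of Theorem~\ref{theorem:unifcvx} is not established by your proposal, and the stronger one certainly is not.

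It is instructive to compare with the paper's proof, which never leaves the vector-valued setting and is built precisely to avoid the two problems above. Lemma~\ref{lem:LDA1} applies the $\BPBp$ on $(\F(X),Y)$ to get $T_g\in S_{\K(\F(X),Y)}$ attaining its norm \emph{exactly} at some $w\in S_{\F(X)}$, and then an averaging argument over convex combinations in $\co(\Mol(X))$ produces molecules $u_n$ on which simultaneously $\|T_g(u_n)\|\to 1$ and an auxiliary $h\in S_{\Lip(X,Y)}$ has slope $>1-\eps$; compactness of $T_g$ then gives a limit slope $z$ with $\|z\|=1=\|g\|$ \emph{on the nose} --- exactly the equality your one-functional shadow cannot deliver. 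Localization at scale $\rho$ and direction control are obtained in Lemma~\ref{lem:LDA2} by choosing $h=\frac{1}{2}(F+y_0x^*)$, where $F$ is a bump supported near a tiny subsegment of $[x,y]$ (large $F$-slope confines $(v_n,w_n)$ spatially and makes $\|v_n-w_n\|$ small) and $x^*$ norms the direction $\frac{x-y}{\|x-y\|}$ (large $x^*$-slope plus uniform convexity of $X$ forces the directions close to $\frac{x-y}{\|x-y\|}$). Finally, Theorem~\ref{theorem:unifcvx} is proved by iterating Lemma~\ref{lem:LDA2} with $\eps_n=\eps/2^{n+1}$ and $\tau=\min\{1,\rho\}$, so that $f_n\to g\in S_{\Lipk(X,Y)}$, $x_n,y_n\to\bar{x}$, the directions converge to $u$, and the slope norms at $(x_n,y_n)$ tend to $1$, whence compactness gives $z\in S_Y$ with $\|z\|=\|g\|=1$. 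If you want to salvage your approach, you would essentially have to rebuild this iterative, fully vector-valued machinery; the single-pass scalar lift as written cannot close.
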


To give a proof of Theorem~\ref{theorem:unifcvx}, we need the following lemmas which generalize some of the results of \cite{KadMarSol}. We state the proofs, because there are some significant differences with the original ones.

\begin{lemma}\label{lem:LDA1}
Let $X$ and $Y$ be Banach spaces such that $(\F(X),Y)$ has the $\BPBp$ for compact operators witnessed by the function $\eps \mapsto \eta(\eps)$. Suppose $0<\eps<1$, $f \in S_{\Lipk(X,Y)}$ and $(x,y) \in \widetilde{X}$ satisfy
$$
\frac{\| f(x)-f(y) \|}{\| x-y \|} > 1 - \eta(\eps).
$$
Then, for every $h \in S_{\Lip(X,Y)}$ satisfying
$$
\frac{\| h(x)-h(y) \|}{\| x-y \|} = 1,
$$
there exist $g \in S_{\Lipk(X,Y)}$, $z \in S_Y$ and $\{ (v_n,w_n) \}_{n=1}^{\infty} \subset \widetilde{X}$ such that
$$
\| g - f \| < \eps, \quad \frac{\| h(v_n)-h(w_n) \|}{\| v_n-w_n \|} > 1 - \eps \quad \text{for every } n \in \N \quad \text{and} \quad \frac{g(v_n)-g(w_n)}{\| v_n-w_n \|} \longrightarrow z.
$$
\end{lemma}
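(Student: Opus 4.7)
My plan is to translate the problem to the Lipschitz-free space via Lemma~\ref{Lemm:elementarypropertiesF(X)}, apply the $\BPBp$ for compact operators on $(\F(X),Y)$, and then perform a Markov-type averaging argument on the resulting norming vector to extract a sequence of molecules that is simultaneously nearly norming for $g$ and close to the slope direction dictated by $h$.

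First, I would rephrase the data on $\F(X)$. The correspondence $\Lip(X,Y)\cong \Lin(\F(X),Y)$ yields $T_f\in S_{\K(\F(X),Y)}$, $T_h\in S_{\Lin(\F(X),Y)}$, and the molecule $m_{x,y}:=(\delta_x-\delta_y)/\|x-y\|\in \Mol(X)\subset S_{\F(X)}$ satisfies $\|T_f(m_{x,y})\|>1-\eta(\eps)$ and $\|T_h(m_{x,y})\|=1$. Taking $\eta(\eps)$ to coincide with the $\BPBp$ function at some parameter $\eps'<\eps$ (to be fixed later), I apply the $\BPBp$ for compact operators to obtain $T_g\in S_{\K(\F(X),Y)}$ and $\mu\in S_{\F(X)}$ with $\|T_g(\mu)\|=1$, $\|T_g-T_f\|<\eps'$ and $\|\mu - m_{x,y}\|<\eps'$. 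The Lipschitz map $g$ associated with $T_g$ then lies in $S_{\Lipk(X,Y)}$ with $\|g-f\|<\eps$. I next pick Hahn--Banach functionals $y_g^*,y_h^*\in S_{Y^*}$ with $y_g^*(T_g(\mu))=1$ and $y_h^*(T_h(m_{x,y}))=1$, and consider the norm-one Lipschitz functions $\varphi_g:=y_g^*\circ T_g$ and $\varphi_h:=y_h^*\circ T_h$ in $S_{\F(X)^*}$. These satisfy $\varphi_g(\mu)=1$, $\varphi_h(m_{x,y})=1$, and hence $\varphi_h(\mu)>1-\eps'$.

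By Lemma~\ref{Lemm:elementarypropertiesF(X)}.(c) I would write $\mu=\lim_k \nu_k$ with $\nu_k=\sum_{j=1}^{N_k}\alpha_{k,j}M_{k,j}$ a convex combination of molecules $M_{k,j}\in\Mol(X)$, so that $\varphi_g(\nu_k)\to 1$ and $\varphi_h(\nu_k)\to \varphi_h(\mu)>1-\eps'$. Since $\varphi_g\leq 1$ on $B_{\F(X)}$, Markov's inequality applied to the non-negative quantities $1-\varphi_g(M_{k,j})$ shows that the ``good'' set $B_k:=\{j:\varphi_g(M_{k,j})>1-\sqrt{\delta_k}\}$, where $\delta_k:=1-\varphi_g(\nu_k)\to 0$, carries almost all the weight: $\sum_{j\notin B_k}\alpha_{k,j}<\sqrt{\delta_k}$. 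The contribution of the indices outside $B_k$ to $\varphi_h(\nu_k)$ is bounded in absolute value by $\sqrt{\delta_k}$, so a weighted-averaging argument produces some $j_k\in B_k$ with $\varphi_h(M_{k,j_k})\geq \varphi_h(\nu_k)-\sqrt{\delta_k}$, which exceeds $1-\eps$ for large $k$ provided $\eps'$ was chosen small enough in terms of $\eps$. Writing $M_{k,j_k}=m_{v_k,w_k}$ and translating back gives simultaneously $\|T_g(m_{v_k,w_k})\|>1-\sqrt{\delta_k}$ and $\|T_h(m_{v_k,w_k})\|\geq \varphi_h(m_{v_k,w_k})>1-\eps$, i.e.\ $\frac{\|h(v_k)-h(w_k)\|}{\|v_k-w_k\|}>1-\eps$ for all large $k$.

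Finally, since $T_g$ is compact, $\{T_g(m_{v_k,w_k})\}$ is relatively compact in $Y$ and so a subsequence converges to some $z\in Y$; since $y_g^*\bigl(T_g(m_{v_k,w_k})\bigr)>1-\sqrt{\delta_k}\to 1$ one has $y_g^*(z)=1$ and hence $z\in S_Y$. Relabelling starting from a sufficiently large index produces the required sequence $\{(v_n,w_n)\}_{n=1}^\infty$. The main obstacle I expect is the double-constraint extraction in the third paragraph: a priori a convex combination approximating $\mu$ could mix molecules good for $T_g$ with disjoint molecules good for $T_h$, with no single molecule being good for both, so one must first restrict to the $T_g$-good subset $B_k$ and only then average $\varphi_h$ over that restricted set, carefully tying the $\BPBp$ parameter $\eps'$ to the target $\eps$.
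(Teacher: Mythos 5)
Your argument is correct, and at the decisive step it runs differently from the paper's. Both proofs share the same skeleton: pass to $T_f\in S_{\K(\F(X),Y)}$, $T_h\in S_{\Lin(\F(X),Y)}$ and the molecule $m=(\delta_x-\delta_y)/\|x-y\|$; apply the $\BPBp$ for compact operators to get $T_g$ and a norming vector close to $m$; use $B_{\F(X)}=\overline{\co}(\Mol(X))$ to replace that vector by convex combinations of molecules; extract single molecules simultaneously good for $g$ and for $h$; and finish with compactness of $T_g$ to produce $z\in S_Y$. The difference lies in the simultaneous extraction, which you rightly single out as the crux. You choose support functionals $y_g^*,y_h^*$ and run a Markov-type estimate: discard the indices where $\varphi_g=y_g^*\circ T_g$ is not within $\sqrt{\delta_k}$ of $1$ (their total weight is at most $\sqrt{\delta_k}$), then average $\varphi_h$ over the surviving set to find one molecule with $\varphi_g>1-\sqrt{\delta_k}$ and $\varphi_h\geq \varphi_h(\nu_k)-\sqrt{\delta_k}$. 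The paper avoids Hahn--Banach choices altogether: for each $n$ it takes a convex combination $w_n$ of molecules with $\|w_n-m\|<\nu<\eps$ and $\|T_g(w_n)\|>1-\frac{1}{n^2}$, notes $\|T_h(w_n)\|>1-\nu$ because $T_h$ norms $m$ exactly, and exploits convexity of $u\longmapsto \frac{1}{n}\|T_h(u)\|+\bigl(1-\frac{1}{n}\bigr)\|T_g(u)\|$ to produce a molecule $u_n$ with $\|T_h(u_n)\|>1-\nu-\frac{1}{n}\bigl(1-\frac{1}{n}\bigr)$ and $\|T_g(u_n)\|>1-\frac{1}{n^2}-\frac{\nu}{n-1}$. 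Your two-functional Markov argument is a standard, reusable device with transparent quantitative control; the paper's $\frac{1}{n}$-weighting trick stays at the level of norms and extracts both estimates from a single inequality.

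Two small points, neither a genuine gap. First, your re-parametrization $\eps'<\eps$ is unnecessary and, taken literally, changes the statement: the hypothesis supplies $\|T_f(m)\|>1-\eta(\eps)$, while invoking the $\BPBp$ at $\eps'$ requires $\|T_f(m)\|>1-\eta(\eps')$, a stronger condition when $\eta$ is nondecreasing. You can simply take $\eps'=\eps$: since $\|\mu-m\|<\eps$ is strict, $\theta:=\varphi_h(\mu)-(1-\eps)>0$, and for $k$ large enough that $\sqrt{\delta_k}<\theta/2$ and $\varphi_h(\nu_k)>\varphi_h(\mu)-\theta/4$ you still get $\varphi_h(M_{k,j_k})>1-\eps$; this is exactly the slack the paper encodes in its choice $\nu<\eps$. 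Second, in the degenerate case $\delta_k=0$ your set $B_k$, defined with a strict inequality, is empty; either define it with $\geq$, or observe that then every molecule carrying positive weight already satisfies $\varphi_g(M_{k,j})=1$ and the averaging of $\varphi_h$ runs over the whole combination.
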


\begin{proof}
Consider $T_f \in S_{\K(\F(X),Y)}$ and let $m = \frac{\delta(x)-\delta(y)}{\| x-y \|} \in S_{\F(X)}$. Since $\| T_f(m) \|> 1 - \eta(\eps)$ and $(\F(X),Y)$ has the $\BPBp$ for compact operators witnessed by the function $\eps \mapsto \eta(\eps)$, we can find $T_g \in S_{\K(\F(X),Y)}$ and $w \in S_{\F(X)}$ such that
$$
\| T_g(w) \| = 1, \quad \| T_g - T_f \| < \eps \quad \text{and} \quad \| w - m \| < \eps.
$$
Since $B_{\F(X)} = \overline{\co}(\Mol(X))$ by Lemma \ref{Lemm:elementarypropertiesF(X)}.(c), we can find $0<\nu < \eps$ and a sequence $\{w_n\}_{n=1}^{\infty} \subset \co(\Mol(X))$ such that
$$
\| w_n - m \| < \nu \quad \text{and} \quad \| T_g(w_n) \| > 1- \frac{1}{n^2}.
$$
Now, for $T_h \in S_{\Lin(\F(X),Y)}$ we have
$$
\| T_h(w_n) \| \geq \| T_h(m) \| - \| m - w_n \| > 1 - \nu.
$$
Thus
$$
\frac{1}{n} \| T_h(w_n) \| + \left( 1 - \frac{1}{n} \right) \| T_g(w_n) \| > \frac{1}{n} (1- \nu) + \left( 1- \frac{1}{n} \right) \left( 1- \frac{1}{n^2}\right).
$$
Therefore, for every $n\in \N$ we can find  $u_n \in \Mol(X)$ so that
$$
\frac{1}{n} \| T_h(u_n) \| + \left( 1 - \frac{1}{n} \right) \| T_g(u_n) \| > \frac{1}{n} (1- \nu) + \left( 1- \frac{1}{n} \right) \left( 1- \frac{1}{n^2}\right).
$$
Hence, from the facts that $\| T_h(u_n) \| \leq 1$ and $\| T_g(u_n) \| \leq 1$, we get routinely that
$$
\|T_h(u_n)\| > 1- \nu -\frac{1}{n} \left( 1 -\frac{1}{n} \right) \quad \text{and} \quad \|T_g(u_n)\| > 1- \frac{1}{n^2} - \frac{\nu}{n-1}.
$$
So we may assume that
$$
\| T_h(u_n) \| > 1 - \eps \quad \text{and} \quad \| T_g(u_n) \| \longrightarrow 1
$$
passing to a subsequence, if necessary. Note that each $u_n$ is of the form $\frac{\delta(v_n)-\delta(w_n)}{\| v_n-w_n \|}$ for suitable $(v_n,w_n)\in \widetilde{X}$. By compactness of $T_g$, there exists $z \in S_Y$ such that
$$
T_g(u_n)=\frac{g(v_n)-g(w_n)}{\|v_n-w_n\|} \longrightarrow z
$$ passing to a subsequence, if necessary. Finally, $g \in \Lipk(X,Y)$ by Lemma~\ref{Lemm:elementarypropertiesF(X)}.(b), so we have obtained the desired result.
\end{proof}

\begin{lemma}\label{lem:LDA2}
Let $X$ and $Y$ be Banach spaces such that $X$ is uniformly convex and $(\F(X),Y)$ has the $\BPBp$ for compact operators. Then, for every $\eps>0$ there exists $\eta >0$ such that for any positive function $\rho \colon   \widetilde{X} \longrightarrow  \R$ and whenever $f \in S_{\Lipk(X,Y)}$ and $(x,y) \in \widetilde{X}$ satisfy
$$
\frac{\| f(x)-f(y) \|}{\| x-y \| } > 1 - \eta,
$$
there exist $g \in S_{\Lipk(X,Y)}$, $z \in S_Y$ and $\{ (v_n,w_n) \}_{n=1}^{\infty} \subset \widetilde{X}$ such that
$$
\frac{g(v_n)-g(w_n)}{\| v_n - w_n \|} \longrightarrow z, \quad \| g -f \| < \eps, \quad \left\| \frac{x-y}{\| x - y \|} - \frac{v_n-w_n}{\| v_n-w_n \|} \right\| < \eps,
$$
and $\| v_n-w_n \| < \eps \rho (x,y)$, $\dist (v_n, [x,y] ) < \eps \rho (x,y)$.
\end{lemma}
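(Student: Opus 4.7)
The plan reduces Lemma~\ref{lem:LDA2} to Lemma~\ref{lem:LDA1} in two stages. First, pass from the given pair $(x,y)$ to a much shorter sub-segment $(\tilde x,\tilde y)\subset[x,y]$ inherited from a high-slope point of $f$ along $[x,y]$. Second, apply Lemma~\ref{lem:LDA1} on $(\tilde x,\tilde y)$ with an auxiliary $h\in S_{\Lip(X,Y)}$ whose near-maximal pairs are automatically forced to be close to $[\tilde x,\tilde y]\subset[x,y]$ in direction, length, and distance.

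For the shortening, write $L=\|x-y\|$, $u=(x-y)/L$, and pick $y^*\in S_{Y^*}$ norming $f(x)-f(y)$. The scalar map $\phi(t):=y^*\bigl(f(y+tu)\bigr)$ is $1$-Lipschitz on $[0,L]$ and satisfies $\phi(L)-\phi(0)>(1-\eta)L$. Since $\R$ has the Radon--Nikod\'ym property, $\phi$ is a.e.\ differentiable with $\phi(L)-\phi(0)=\int_{0}^{L}\phi'(t)\,dt$, and a Chebyshev-type estimate locates a differentiability point $t_*\in[0,L]$ with $\phi'(t_*)>1-\sqrt{\eta}$. Picking an arbitrarily short sub-interval around $t_*$ produces $\tilde y,\tilde x\in[x,y]$ of direction $u$ with $L_0:=\|\tilde x-\tilde y\|<\eps\rho(x,y)/2$ and $\|f(\tilde x)-f(\tilde y)\|/\|\tilde x-\tilde y\|>1-\sqrt{\eta}-o(1)$. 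Choosing $\eta$ small enough that $\sqrt{\eta}+o(1)$ falls strictly below the BPBp-threshold of Lemma~\ref{lem:LDA1} prepares this pair for the next stage.

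To build $h$, pick via Hahn--Banach $u^*\in S_{X^*}$ with $u^*(u)=1$ and fix $y_0\in S_Y$. The base ingredient is the unit-slope clamp $h_0(v):=\max\bigl(0,\min(L_0,u^*(v-\tilde y))\bigr)\,y_0$, which is $1$-Lipschitz and attains slope $1$ at $(\tilde x,\tilde y)$. By the dual Smulyan-type consequence of uniform convexity of $X$ (for every $\eps'>0$ there exists $\delta'>0$, determined by the modulus of convexity $\delta_X$, such that $e\in S_X$ and $u^*(e)>1-\delta'$ force $\|e-u\|<\eps'$), any pair $(v,w)$ with $\|h_0(v)-h_0(w)\|/\|v-w\|$ close to $1$ must have direction $(v-w)/\|v-w\|$ close to $u$; since $h_0$ has range $[0,L_0]\,y_0$, the length $\|v-w\|$ is also bounded by essentially $L_0<\eps\rho(x,y)/2$. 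What $h_0$ does not control is the distance from $v$ to $[\tilde x,\tilde y]$ in the orthogonal direction. This is repaired by adding to $h_0$ a Lipschitz penalty $\beta\,p(v)\,y_0$, where $p$ vanishes on a tube of radius $\eps\rho(x,y)/4$ around $[\tilde x,\tilde y]$ and grows with unit slope outside, and $\beta\sim\eps$ is small; a careful calibration of the profile and of the rescaling ensures $\|h\|_{\Lip}=1$ remains attained sharply at $(\tilde x,\tilde y)$ while near-maximal $h$-pairs now also satisfy $\dist(v,[\tilde x,\tilde y])<\eps\rho(x,y)/2$.

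Feeding $f$, $(\tilde x,\tilde y)$, and $h$ into Lemma~\ref{lem:LDA1} yields $g\in S_{\Lipk(X,Y)}$, $z\in S_Y$, and $\{(v_n,w_n)\}\subset\widetilde X$ with $\|g-f\|<\eps$, $(g(v_n)-g(w_n))/\|v_n-w_n\|\longrightarrow z$, and $\|h(v_n)-h(w_n)\|/\|v_n-w_n\|>1-\eps$ for all $n$. The last inequality, combined with the three properties of $h$, delivers the direction bound, the bound $\|v_n-w_n\|<\eps\rho(x,y)$, and $\dist(v_n,[x,y])<\eps\rho(x,y)$ (this last via $[\tilde x,\tilde y]\subset[x,y]$). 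The main obstacle lies in the calibration of the penalty in the preceding step: the $\beta\,p$ term must knock the slope below $1-\eps$ for every pair with $v$ outside the tube without inflating the Lipschitz norm of $h$ above $1$, so $\beta$, the penalty profile, the rescaling, and the threshold $\eta$ have to be tuned jointly in terms of the modulus $\delta_X$ of $X$.
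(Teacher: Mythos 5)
Your overall architecture is the paper's: shorten $(x,y)$ to a tiny pair $(\tilde x,\tilde y)\subset[x,y]$ on which $f$ still has high slope, build an auxiliary $h\in S_{\Lip(X,Y)}$ attaining slope exactly $1$ at $(\tilde x,\tilde y)$, feed everything into Lemma~\ref{lem:LDA1}, and convert the resulting high $h$-slopes into the direction, length, and distance estimates. (Your shortening step via a scalar reduction and a Chebyshev estimate works but is lossier than needed: a plain subdivision/triangle-inequality argument yields a subsegment of any prescribed length where the slope of $f$ is still $>1-\eta$, with no $\sqrt{\eta}$ degradation -- this is what the paper does when it ``chooses $\tilde x,\tilde y$''.) The genuine gap is in your construction of $h$. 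Lemma~\ref{lem:LDA1} requires both $\|h\|=1$ and slope exactly $1$ at the pair you feed in, and your ``clamp plus additive tube penalty'' cannot satisfy both. The penalty $p$ must be active at points \emph{inside} the slab $\{0\leq u^*(\cdot-\tilde y)\leq L_0\}$, because pairs such as $(\tilde x+Re,\,\tilde y+Re)$ with $u^*(e)=0$ have $h_0$-slope exactly $1$ at arbitrary distance $R$ from the segment, so $h_0$ alone never rules them out. But once $p$ is active inside the slab, mixed increments $v-w=tu+se$ combining the clamp direction with motion transverse to the tube give the scalar function $\min(L_0,u^*(\cdot-\tilde y))^+ +\beta p$ slope up to roughly $\sqrt{1+\beta^2}$ (exactly that in Hilbert space, where the two ``gradients'' $u^*$ and the penalty direction are orthogonal), so $\|h_0+\beta p\,y_0\|>1$ for every $\beta>0$. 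Renormalizing then drops the slope at $(\tilde x,\tilde y)$ to something strictly below $1$, and Lemma~\ref{lem:LDA1} no longer applies; no joint tuning of $\beta$, the profile, and $\eta$ escapes this, since the obstruction is the strict norm inflation caused by any additive penalty active where the clamp already has slope $1$.

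The paper resolves exactly this point with a structurally different $h$: instead of a slab-shaped clamp corrected by a penalty, it uses the radial cone $F(w)=\max\{\|\tilde x-\tilde y\|-\|\tilde x-w\|,\,0\}\,y_0$, supported in the ball of radius $L_0=\|\tilde x-\tilde y\|$ around $\tilde x$, and sets $h=\frac12(F+y_0x^*)$. Both summands have Lipschitz norm $1$ and attain slope $1$ at $(\tilde x,\tilde y)$ with the \emph{same} output direction $y_0$, so the convex combination has $\|h\|=1$ with slope exactly $1$ there -- this is the calibration your additive penalty cannot achieve. Moreover, since each summand has slope at most $1$, an $h$-slope greater than $1-\min\{\eps,\delta_X(\eps)/2\}$ forces the $F$-slope above $1-2\eps$ and the $x^*$-slope above $1-\delta_X(\eps)$: the latter gives the direction estimate via uniform convexity (as in your Smulyan-type step), while the former gives \emph{both} $\|v_n-w_n\|<\frac{\eps\rho(x,y)}{4(1-2\eps)}$ (from the bounded range of $F$) and $\dist(v_n,[x,y])<\eps\rho(x,y)$ (since any pair with nonzero $F$-increment meets the tiny ball $\supp F$), so no separate tube penalty is needed. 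A last small point your setup ignores: since elements of $\Lip(X,Y)$ must vanish at $0$, the paper also requires $\|\tilde x-\tilde y\|<\frac14\min\{\eps\rho(x,y),\|\tilde x\|,\|\tilde y\|\}$ so that $F(0)=0$; your $h_0$ needs the analogous normalization.
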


\begin{proof}
Assume that $(\F(X),Y)$ has the $\BPBp$ for compact operators witnessed by the function $\eps \mapsto \eta_0(\eps)$ with $\eta_0(\eps) <\eps$. Let $0<\eps<1/4$ and put $\eta := \eta_0 \bigl( \min \bigl\{ \eps, \delta_X(\eps)/2 \bigr\} \bigr) > 0$, where $\delta_X$ is the modulus of convexity of $X$. Suppose that a positive function $\rho\colon   \widetilde{X} \longrightarrow  \R$ is given and $f \in S_{\Lipk(X,Y)}$ and $(x,y) \in \widetilde{X}$ satisfy
$$
\frac{\| f(x)-f(y) \|}{\| x-y \| } > 1 - \eta.
$$
Choose $\tilde{x},\tilde{y} \in \widetilde{[x,y]}$ such that
$$
\frac{\| f(\tilde{x})-f(\tilde{y}) \|}{\| \tilde{x}-\tilde{y} \| } > 1 - \eta, \quad \frac{\tilde{x} - \tilde{y}}{\| \tilde{x} - \tilde{y} \|} = \frac{x - y}{\| x - y \|} \quad \text{and} \quad \| \tilde{x} - \tilde{y} \| < \frac{1}{4} \min \bigl\{ \eps \rho (x,y), \| \tilde{x} \| , \| \tilde{y} \| \bigr\}.
$$
Fix any $y_0 \in S_Y$ and define $F \in \Lip(X,Y)$ by $F(w) := \max \{ \| \tilde{x} - \tilde{y} \| - \| \tilde{x} - w \| , 0 \}\,y_0$.
Then, $\| F \| = 1$ and
$$
\frac{\| F(\tilde{x}) - F(\tilde{y}) \|}{\| \tilde{x} - \tilde{y} \|} = 1.
$$
Let $x^* \in S_{X^*}$ be such that
$$
x^* \left(\frac{\tilde{x} - \tilde{y}}{\| \tilde{x} - \tilde{y} \|}\right) = 1.
$$
If we define $h := \frac{1}{2} (F+y_0 x^*)$, then $h \in S_{\Lip(X,Y)}$ and
$$
\frac{\| h(\tilde{x}) - h(\tilde{y}) \|}{\| \tilde{x} -\tilde{y} \|} = 1.
$$
Applying Lemma \ref{lem:LDA1}, we can find $g \in S_{\Lipk(X,Y)}$, $z \in S_Y$ and $\{ (v_n,w_n) \}_{n=1}^{\infty} \subset \widetilde{X}$ such that
$$
\| f - g \| < \eps, \quad \frac{\| h(v_n)-h(w_n) \|}{\| v_n-w_n \|} > 1- \min \left\{ \eps, \frac{\delta_X(\eps)}{2} \right\} \quad \text{and} \quad \frac{g(v_n)-g(w_n)}{\| v_n - w_n \|} \longrightarrow z.
$$
From the second inequality, we get
\begin{equation}\label{eq:lemma}
\frac{\| F(v_n)-F(w_n) \|}{\| v_n-w_n \|} > 1-2\eps \quad \text{and} \quad \frac{|x^*(v_n)-x^*(w_n)|}{\| v_n-w_n \|} > 1- \delta_X(\eps).
\end{equation}
Here, we may assume $x^*(v_n)-x^*(w_n) >0$ replacing $z$ by $-z$ if necessary. Since
$$
x^*\left(\frac{x-y}{\| x-y \|}\right) = x^*\left(\frac{\tilde{x}-\tilde{y}}{\| \tilde{x}-\tilde{y} \|}\right) = 1 > 1 - \delta_X(\eps),
$$
we obtain by the uniform convexity of $X$ that
$$
\left\| \frac{x-y}{\| x - y \|} - \frac{v_n-w_n}{\| v_n-w_n \|} \right\| < \eps.
$$
Now,
$$
\| v_n - w_n \| < \frac{\| F(v_n) - F(w_n) \|}{1- 2 \eps} \leq \frac{\eps \rho (x,y)}{4(1- 2 \eps )} < \frac{1}{2}\eps \rho (x,y).
$$
Suppose that $v_n \in \supp F$. Then, combined with the fact that $\|\tilde{x}-\tilde{y}\|\leq  \frac{1}{4}\eps \rho (x,y)$, we can deduce that
$$\dist(v_n, [x,y])< \frac{1}{4}\eps \rho (x,y) < \eps \rho (x,y).$$

If $v_n\notin \supp F$, then we must have that $w_n \in \supp F$ by \eqref{eq:lemma}. Hence, we have
$$\dist(v_n, [x,y])\leq \|v_n-w_n\| + \dist(w_n, [x,y]) < \rho(x,y),$$
which completes the proof.

\end{proof}

We are now ready to present the pending proof.

\begin{proof}[Proof of Theorem \ref{theorem:unifcvx}]
Let $\eps > 0$ be given. Set $\eps_n := \frac{\eps}{2^{n+1}}$ for each $n \in \N$ and $\eta := \eta_1 (\eps_1)$ where $\eta_1$ is from Lemma \ref{lem:LDA2}. Suppose that a positive function $\rho\colon  \widetilde{X} \longrightarrow  \R$ is given and $f \in S_{\Lipk(X,Y)}$ and $(x,y) \in \widetilde{X}$ satisfy
$$
\frac{\| f(x)-f(y) \|}{\| x-y \| } > 1 - \eta.
$$
Consider $\tau:=\min \{ 1, \rho \}$. We use Lemma \ref{lem:LDA2} to get $f_2 \in S_{\Lipk(X,Y)}$, $z_2 \in S_Y$ and $\{ (v_n,w_n) \}_{n=1}^{\infty} \subset \widetilde{X}$ satisfying the conditions given there with $\eps_1$ and $\tau$.
Choose $n_1 \in \N$ such that
$$
\left\| \frac{f_2 (v_{n_1}) - f_2 (w_{n_1})}{\| v_{n_1} - w_{n_1} \|} - z_2 \right\| <  \eta_1 \bigl(\eps_2 \tau (x,y)\bigr).
$$
Set $x_2 := v_{n_1}$, $y_2 := w_{n_1}$, $f_1 := f$, $x_1 := x$ and $ y_1 := y$. So far we have
\begin{enumerate}[(a)]
\item $\displaystyle \left\| \frac{f_2 (x_2) - f_2 (y_2)}{\| x_2 - y_2 \|} - z_2 \right\| < \eta_1 \bigl( \eps_2 \tau (x_1,y_1)\bigr),$ \\
\item $\displaystyle \| f_1 - f_2 \| < \eps_1,$ \\
\item $\displaystyle \left\| \frac{x_1 - y_1}{\| x_1 - y_1 \|} - \frac{x_2 - y_2}{\| x_2 - y_2 \|} \right\| < \eps_1,$ \\
\item $\displaystyle \| x_2 - y_2 \| < \eps_1 \tau (x_1,y_1),$ \\
\item $\displaystyle \dist ( x_2, [x_1,y_1] ) < \eps_1 \tau (x_1,y_1).$ \\
\end{enumerate}
Now, by an inductive procedure, we get $\{ f_n \}_{n=1}^\infty \subset S_{\Lipk(X,Y)}$, $\{z_n\}_{n=1}^\infty \subset S_Y$ and $\{ ( x_n , y_n ) \}_{n=1}^{\infty} \subset \widetilde{X}$ satisfying for every $n \in \N$:
\begin{enumerate}[(a')]
\item $\displaystyle \left\| \frac{f_{n+1} (x_{n+1}) - f_{n+1} (y_{n+1})}{\| x_{n+1} - y_{n+1} \|} - z_{n+1} \right\| < \eta_1 \bigl( \eps_{n+1} \tau (x_1,y_1)\bigr),$ \\
\item $\displaystyle \| f_n - f_{n+1} \| < \eps_n \tau (x_1,y_1) \leq \eps_n,$ \\
\item $\displaystyle \left\| \frac{x_n - y_n}{\| x_n - y_n \|} - \frac{x_{n+1} - y_{n+1}}{\| x_{n+1} - y_{n+1} \|} \right\| < \eps_n \tau (x_1,y_1) \leq \eps_n,$ \\
\item $\displaystyle \| x_{n+1} - y_{n+1} \| < \eps_n \tau (x_1,y_1) \cdot \tau (x_n,y_n) \leq \eps_n \tau (x_1,y_1),$ \\
\item $\displaystyle \dist ( x_{n+1}, [x_n,y_n] ) < \eps_n \tau (x_1,y_1) \cdot \tau (x_n,y_n) \leq \eps_n \tau (x_1,y_1).$ \\
\end{enumerate}
From (d') and (e'), we have that
$$
\| x_{n+1} - x_{n+2} \| < \eps_n \tau (x_1,y_1) + \eps_{n+1} \tau (x_1,y_1) \leq \eps_n + \eps_{n+1} \longrightarrow 0
$$
as $n$ tends to $\infty$, so that there exists $\bar{x} \in X$ such that both $x_n, y_n$ converge to $\bar{x}$ with the aid of (d').
Note that
\begin{align*}
\dist (\bar{x}, [x,y] ) & \leq \dist (x_2, [x_1,y_1] ) + \| x_2 -\bar{x} \| \\
&\leq \dist (x_2, [x_1,y_1] )  + \sum_{n=1}^\infty \| x_{n+1} - x_{n+2} \| \\
&< 2 \sum_{n=1}^\infty \eps_n \tau (x_1,y_1) \leq \eps \rho (x,y).
\end{align*}
From (b') and (c'), there exist $g \in S_{\Lipk(X,Y)}$ and $u \in S_X$ such that
$$
f_n \longrightarrow g \quad \text{and} \quad \frac{x_n - y_n}{\| x_n - y_n \|} \longrightarrow u.
$$
Then,
$$
\| f - g \| \leq \sum_{n=1}^\infty \eps_n < \eps \quad \text{and} \quad \left\| \frac{x-y}{\| x-y \|} - u \right\| \leq \sum_{n=1}^\infty \eps_n < \eps.
$$
From (a'), we obtain that
\begin{align*}
\frac{ \| g(x_n)-g(y_n) \|}{\| x_n-y_n \|} &\geq \frac{ \| f_n(x_n)-f_n(y_n) \|}{\| x_n-y_n \|} - \frac{ \| (g-f_n)(x_n)-(g-f_n)(y_n) \|}{\| x_n-y_n \|} \\
&> 1 -  \eta_1 \bigl( \eps_n \tau (x_1,y_1)\bigr) - \frac{ \| (g-f_n)(x_n)-(g-f_n)(y_n) \|}{\| x_n-y_n \|} \longrightarrow 1
\end{align*}
as $n$ tends to $\infty$. Thus by compactness, we may conclude that $\frac{g(x_n)-g(y_n)}{\| x_n-y_n \|} \longrightarrow  z$ for some $z \in S_Y$ passing to a subsequence if necessary.
\end{proof}

As a direct consequence of Theorem \ref{theorem:unifcvx} and \cite[Example 1.5]{DanGarMaeMar}, we get the following certain result.

\begin{cor}
\label{cor:universalrange}
Let $X$ be a uniformly convex Banach space and let $Y$ be a Banach space satisfying one of the following properties:
\begin{enumerate}
\item[\textup{(a)}] $Y$ has property $\beta$,
\item[\textup{(b)}] $Y^*$ is isometrically isomorphic to some $L_1(\mu)$-space.
\end{enumerate}
Then, the pair $(X,Y)$ has the $\LDBPBp$ for Lipschitz compact maps. In particular, $\LDAk(X,Y)$ is dense in $\Lipk(X,Y)$.
\end{cor}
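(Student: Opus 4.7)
The plan is to apply Theorem~\ref{theorem:unifcvx} to the domain space $X$ and observe that the only nontrivial hypothesis to verify is that the pair $(\F(X),Y)$ has the $\BPBp$ for compact operators. Given that $X$ is uniformly convex by assumption, the whole argument reduces to checking this $\BPBp$ condition for the pair of linear Banach spaces $(\F(X),Y)$.

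For the verification, I would invoke the results recalled from \cite[Example 1.5]{DanGarMaeMar} (and alluded to in the discussion preceding Definition~\ref{def:sldBPBp}), which assert that $(Z,Y)$ has the $\BPBp$ for compact operators for \emph{every} Banach space $Z$ provided that either $Y$ has property $\beta$ or $Y^*$ is isometrically isomorphic to some $L_1(\mu)$-space. In the case (a), we take $Z = \F(X)$ and directly get that $(\F(X),Y)$ has the $\BPBp$ for compact operators. In case (b), the same applies taking $Z=\F(X)$, since the cited result makes no restriction on the domain space.

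Having established the $\BPBp$ for compact operators of $(\F(X),Y)$ under either hypothesis (a) or (b), Theorem~\ref{theorem:unifcvx} applies verbatim and yields that $(X,Y)$ has the $\LDBPBp$ for Lipschitz compact maps. The final ``in particular'' clause on denseness of $\LDAk(X,Y)$ in $\Lipk(X,Y)$ is then just the general observation made immediately after Definition~\ref{def:sldBPBp}: the quantitative $\LDBPBp$ statement applied to any $f \in S_{\Lipk(X,Y)}$ (and any $(x,y)\in\widetilde{X}$ nearly realizing the norm) produces approximating maps in $\LDAk(X,Y)$.

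Since both ingredients are ready-made, there is no genuine obstacle here; the only thing worth being careful about is making sure the statement of the cited result from \cite{DanGarMaeMar} is phrased exactly in the form ``for every $Z$, the pair $(Z,Y)$ enjoys the $\BPBp$ for compact operators'', so that it can be instantiated at the possibly non-reflexive and generally quite large space $Z=\F(X)$. Once this is in place, the corollary is immediate and no further estimates, moduli, or constructions are required.
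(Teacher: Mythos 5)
Your proof is correct and follows exactly the paper's route: the authors likewise obtain the corollary as a direct consequence of Theorem~\ref{theorem:unifcvx} combined with \cite[Example 1.5]{DanGarMaeMar}, which gives the $\BPBp$ for compact operators of $(Z,Y)$ for \emph{every} domain space $Z$ (hence for $Z=\F(X)$) under either hypothesis (a) or (b). The ``in particular'' clause is, as you note, just the observation following Definition~\ref{def:sldBPBp}.
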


The next result is a slightly different version of Theorem \ref{theorem:unifcvx} when the domain space is specified to a Hilbert space.

\begin{theorem}\label{theorem:hilbert}
Let $H$ be a Hilbert space and let $Y$ be a Banach space. Suppose that $(\F(H),Y)$ has the $\BPBp$ for compact operators. Then, for every $\eps>0$, there exists $\eta >0$ such that whenever $f \in S_{\Lipk(X,Y)}$ and $(x,y) \in \widetilde{X}$ satisfy
$$
\frac{\| f(x)-f(y) \|}{\| x-y \| } > 1 - \eta,
$$
there exist $g \in S_{\Lipk(X,Y)}$, $z \in S_Y$ and $\bar{x} \in X$ such that $g$ attains its norm locally directionally at the point $\bar{x}$ in the direction $\frac{x-y}{\| x-y \|}$ toward $z$, $\| g -f \| < \eps$ and $\dist (\bar{x}, [x,y] ) < \eps\max\{\|x\|,\|y\|\}$.
\end{theorem}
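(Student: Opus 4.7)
The plan is to follow the strategy of Theorem \ref{theorem:unifcvx}, specialized to the Hilbert setting, and introduce one additional ingredient that upgrades the directional closeness $\|u-\tfrac{x-y}{\|x-y\|}\|<\eps$ produced by Theorem \ref{theorem:unifcvx} to an \emph{equality} of directions. First, I would apply Theorem \ref{theorem:unifcvx} with the weight function $\rho(x,y):=\max\{\|x\|,\|y\|\}$, so that the distance estimate $\dist(\bar{x},[x,y])<\eps\max\{\|x\|,\|y\|\}$ is already in the conclusion. The nontrivial point is therefore to force the direction to be exactly $e:=\tfrac{x-y}{\|x-y\|}$.

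Write $L:=x+\R e$. The Hilbert-specific observation I would use is that the orthogonal projection $\pi\colon H\longrightarrow L$ is $1$-Lipschitz, and that the modulus of convexity of $H$ is quadratic: whenever a unit vector $v\in S_H$ satisfies $\langle v,e\rangle>1-\delta$, the orthogonal component $v-\langle v,e\rangle e$ has norm at most $\sqrt{2\delta-\delta^{2}}$. Consequently, whenever (the Hilbert version of) Lemma~\ref{lem:LDA2} produces a pair $(v,w)$ whose direction is $\delta$-close to $e$ in inner product, the orthogonal component of $v-w$ is only $O(\sqrt{\delta})\|v-w\|$.

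My plan is then to re-run the iterative construction of Theorem~\ref{theorem:unifcvx} verbatim, except that at each step, immediately after invoking Lemma~\ref{lem:LDA2} to produce the next candidate pair $(x_{n+1},y_{n+1})$, I replace it by its orthogonal projection onto the fixed line $L$. The projected pair lies on $L$, so its direction is exactly $\pm e$; and because $\pi$ is $1$-Lipschitz while the orthogonal deviation of $x_{n+1}-y_{n+1}$ from $e$ is $O(\sqrt{\eps_n})\|x_{n+1}-y_{n+1}\|$, the slope of the current $f_{n}$ at the projected pair differs from the slope at the unprojected pair by at most $O(\sqrt{\eps_n})$. By choosing the sequence $\eps_n$ to decrease sufficiently fast (so that both $\sum\eps_n$ and $\sum\sqrt{\eps_n}$ are controlled), this deterioration is absorbable into the slope tolerance required to re-invoke Lemma~\ref{lem:LDA2} at the next level.

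Passing to the limit exactly as in Theorem~\ref{theorem:unifcvx}, I obtain $g\in S_{\Lipk(H,Y)}$ with $\|g-f\|<\eps$, a limit point $\bar{x}\in L$, and a sequence $(x_n,y_n)\subset L$ with $x_n,y_n\to\bar{x}$ and $\tfrac{x_n-y_n}{\|x_n-y_n\|}=e$ for every $n$; the compactness of $g$ then yields a subsequence along which $\tfrac{g(x_n)-g(y_n)}{\|x_n-y_n\|}\longrightarrow z\in S_Y$, so $g$ attains its norm locally directionally at $\bar{x}$ in the direction $e$ toward $z$. Since both $\bar{x}$ and the segment $[x,y]$ lie on $L$, the distance bound $\dist(\bar{x},[x,y])<\eps\max\{\|x\|,\|y\|\}$ is inherited from the choice of $\rho$. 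The main technical obstacle, as one would expect, is the bookkeeping at the projection step: one must verify that after projecting onto $L$ the new pair still has slope within the window required to re-apply Lemma~\ref{lem:LDA2}, and this is precisely where the quadratic behaviour of the Hilbert modulus of convexity is indispensable.
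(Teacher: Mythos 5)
Your reduction to Theorem \ref{theorem:unifcvx} with $\rho(x,y)=\max\{\|x\|,\|y\|\}$ is the right first move, but the projection step that is supposed to upgrade approximate directions to the exact direction contains a genuine gap. The quadratic modulus of convexity of $H$ controls only the tilt of the displacement vector: if $u'=\frac{v-w}{\|v-w\|}$ satisfies $\langle u',e\rangle>1-\delta$, then indeed $\|(\pi v-\pi w)-(v-w)\|\leq \sqrt{2\delta}\,\|v-w\|$. But the slope comparison requires moving the \emph{points}, not the vector, and
$$
\left\| \frac{f_n(\pi v)-f_n(\pi w)}{\|\pi v-\pi w\|}-\frac{f_n(v)-f_n(w)}{\|v-w\|} \right\|
\ \text{is of order}\ \frac{\dist(v,L)+\dist(w,L)}{\|v-w\|},
$$
not of order $\sqrt{\eps_n}$. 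Lemma \ref{lem:LDA2} gives \emph{upper} bounds $\|v_n-w_n\|<\eps\rho(x,y)$ and $\dist(v_n,[x,y])<\eps\rho(x,y)$ of the same order, but no lower bound on $\|v_n-w_n\|$, so the ratio $\dist(v_n,L)/\|v_n-w_n\|$ is completely uncontrolled (think of an $f$ that is locally constant on a neighbourhood of $L$ while having slope close to $1$ at a pair slightly off $L$: projecting annihilates the slope). Moreover, even if your $O(\sqrt{\eps_n})$ estimate were true, the bookkeeping cannot close: to re-enter Lemma \ref{lem:LDA2} at level $n+1$ the projected pair must have slope within $\eta_1\bigl(\eps_{n+1}\tau(x_1,y_1)\bigr)$ of $1$, and since the BPBp modulus satisfies $\eta_0(\eps)<\eps$, the requirement $C\sqrt{\eps_n}\leq\eta_1\bigl(\eps_{n+1}\tau(x_1,y_1)\bigr)\leq\eps_{n+1}\tau(x_1,y_1)$ forces $\eps_{n+1}\gtrsim\sqrt{\eps_n}$, which prevents $\eps_n\to 0$ and makes $\sum_n\eps_n<\eps$ unattainable for an arbitrary modulus $\eta_1$; square-root losses compound fatally against the iteration tolerances.

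The paper's proof avoids re-running the iteration altogether. It applies Theorem \ref{theorem:unifcvx} exactly once, with $\eps/3$ and the same $\rho(x,y)=\max\{\|x\|,\|y\|\}$, to get $\tilde g\in S_{\Lipk(H,Y)}$ attaining its norm locally directionally at $\tilde x$ in a direction $u$ with $\bigl\|u-\frac{x-y}{\|x-y\|}\bigr\|<\eps/3$, and then invokes the Hilbert-specific fact (see \cite[Lemma 2.2]{AcoMasSol}) that there is a surjective linear isometry $R\colon H\longrightarrow H$ with $R(u)=\frac{x-y}{\|x-y\|}$ and $\|R-\Id_H\|<\eps/3$. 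Setting $g:=\tilde g\circ R^{-1}$, $\bar x:=R(\tilde x)$ and $(x_n,y_n):=\bigl(R(\tilde x_n),R(\tilde y_n)\bigr)$ transports the attainment data \emph{exactly}: slopes are preserved because $R$ is an isometry, the direction becomes precisely $\frac{x-y}{\|x-y\|}$, and the distance estimate survives with an $\eps/3$ loss. This global rotation is the efficient substitute for your pointwise projections: it incurs a single, controlled perturbation instead of a per-step error that the available lemmas cannot bound. If you wanted to rescue a projection-style argument, you would first need a strengthened Lemma \ref{lem:LDA2} with proportional control $\dist(v_n,[x,y])<\eps\|v_n-w_n\|$, which its proof (via the bump $F$ supported near $[\tilde x,\tilde y]$) does not provide.
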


\begin{proof}
Let $0<\eps<1$ be given. Choose $\eta > 0$ as in Theorem \ref{theorem:unifcvx} applied with $\frac{\eps}{3}$ and $\rho(x,y)=\max\{\|x\|,\|y\|\}$. Suppose $f \in S_{\Lipk(H,Y)}$ and $(x,y) \in \widetilde{H}$ satisfy
$$
\frac{\| f(x)-f(y) \|}{\| x-y \| } > 1 - \eta.
$$
Then, we can find $\tilde{g} \in S_{\Lipk(H,Y)}$, $z \in S_Y$, $u \in S_H$, $\tilde{x} \in H$ and $ \{ (\tilde{x}_n,\tilde{y}_n) \}_{n=1}^{\infty} \subset \widetilde{H}$ such that
$$
\tilde{x}_n, \tilde{y}_n \longrightarrow \tilde{x}, \quad \frac{\tilde{x}_n-\tilde{y}_n}{\| \tilde{x}_n-\tilde{y}_n \|} \longrightarrow u, \quad \frac{\tilde{g}(\tilde{x}_n) - \tilde{g}(\tilde{y}_n)}{\| \tilde{x}_n-\tilde{y}_n \|} \longrightarrow z,
$$
and
$$
\| \tilde{g} - f \| < \frac {\eps}{3}, \quad \left\| \frac{x-y}{\| x-y \|} - u \right\| < \frac{\eps}{3}, \quad \dist ( \tilde{x}, [x,y]) < \frac{\eps}{3}\rho(x,y).
$$
Since $H$ is a Hilbert space, there exists a linear isometry $R\colon  H \longrightarrow  H$ such that
$$
R(u)=\frac{x-y}{\| x-y \|}~~ \text{and} ~~\| R - \Id_H \| < \frac{\eps}{3}
$$
(see \cite[Lemma 2.2]{AcoMasSol} for instance). Now, consider $g := \tilde{g} \circ R^{-1} \in S_{\Lipk(H,Y)}$, $\{(x_n,y_n) \}_{n=1}^{\infty} \subset \widetilde{H}$ with $(x_n,y_n) = \bigl(R(\tilde{x}_n),  R(\tilde{y}_n)\bigr)$ for each $n \in \N$, and put $\bar{x}:= R(\tilde{x})$. Then,
\begin{enumerate}[(a)]
\item $\displaystyle x_n,y_n \longrightarrow R(\tilde{x})=\bar{x}$,
\item $\displaystyle \frac{x_n-y_n}{\| x_n-y_n \|} = R\left(\frac{\tilde{x}_n-\tilde{y}_n}{\| \tilde{x}_n-\tilde{y}_n \|}\right) \longrightarrow R(u) = \frac{x-y}{\| x-y \|},$ \\
\item $\displaystyle \frac{g(x_n)-g(y_n)}{\| x_n-y_n \|} = \frac{\tilde{g}(\tilde{x}_n) - \tilde{g}(\tilde{y}_n)}{\| \tilde{x}_n-\tilde{y}_n \|} \longrightarrow z,$ \\
\item $\displaystyle \| g- f \| \leq \| g - \tilde{g} \| + \| \tilde{g} - f \| \leq \| \tilde{g} \circ R^{-1} \| \| R- \Id_H \| + \| \tilde{g} - f \| < \frac{\eps}{3} + \frac{\eps}{3} < \eps$,\\
\item $\displaystyle \dist (\bar{x}, [x,y] ) \leq \| R(\tilde{x}) - \tilde{x} \| + \dist ( \tilde{x}, [x,y] )\\ \\
\phantom{-..WWW-} \leq \| R- \Id_H \| \| \tilde{x} \| + \dist ( \tilde{x}, [x,y] ) \\ \\
\phantom{-..WWW-} < \frac{\eps}{3} \bigl( \max \{ \| x \| , \| y \| \} + \frac{\eps}{3} \rho (x,y) \bigr) + \frac{\eps}{3} \rho (x,y) < \eps \rho (x,y).$ \qedhere
\end{enumerate}
\end{proof}

Again from \cite[Example 1.5]{DanGarMaeMar}, we can derive the same type of results as those given in Corollary~\ref{cor:universalrange}.

\begin{cor}
Let $H$ be a Hilbert space and let $Y$ be a Banach space satisfying one of the following properties:
\begin{enumerate}
\item[\textup{(a)}] $Y$ has property $\beta$,
\item[\textup{(b)}] $Y^*$ is isometrically isomorphic to some $L_1(\mu)$-space.
\end{enumerate}
Then, for every $\eps>0$, there exists $\eta >0$ such that whenever $f \in S_{\Lipk(X,Y)}$ and $(x,y) \in \widetilde{X}$ satisfy
$$
\frac{\| f(x)-f(y) \|}{\| x-y \| } > 1 - \eta,
$$
there exist $g \in S_{\Lipk(X,Y)}$, $z \in S_Y$ and $\bar{x} \in X$ such that $g$ attains its norm locally directionally at the point $\bar{x}$ in the direction $\frac{x-y}{\| x-y \|}$ toward $z$, $\| g -f \| < \eps$ and $\dist (\bar{x}, [x,y] ) < \eps\max\{\|x\|,\|y\|\}$.
\end{cor}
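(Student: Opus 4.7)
The plan is to observe that this is a direct specialization of Theorem~\ref{theorem:hilbert}. The only hypothesis of that theorem to verify is that the pair $(\F(H),Y)$ has the $\BPBp$ for compact operators, and that verification is exactly what is provided by \cite[Example 1.5]{DanGarMaeMar} for each of the two geometric assumptions on $Y$. Indeed, as recalled in the introduction, when $Y$ has property $\beta$ the pair $(Z,Y)$ has the $\BPBp$ for compact operators for \emph{every} Banach space $Z$ (this is \cite[Theorem~2.2]{AcoAroGarMae} together with \cite[Example~1.5]{DanGarMaeMar}); likewise, when $Y^{*}$ is isometrically isomorphic to some $L_{1}(\mu)$-space, the same universal conclusion is provided by \cite[Example~1.5]{DanGarMaeMar}.

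Concretely, first I would fix a Hilbert space $H$ and a Banach space $Y$ satisfying (a) or (b). Applying the appropriate item of the discussion above with $Z=\F(H)$, I would conclude that $(\F(H),Y)$ has the $\BPBp$ for compact operators. Then I would feed this into Theorem~\ref{theorem:hilbert}: given $\eps>0$, the theorem produces the required $\eta>0$ such that any $f\in S_{\Lipk(H,Y)}$ and $(x,y)\in \widetilde{H}$ with $\|f(x)-f(y)\|/\|x-y\|>1-\eta$ can be perturbed to $g\in S_{\Lipk(H,Y)}$, $z\in S_{Y}$, and $\bar x\in H$ with $g$ attaining its norm locally directionally at $\bar x$ in the direction $(x-y)/\|x-y\|$ toward $z$, and with $\|g-f\|<\eps$ and $\dist(\bar x,[x,y])<\eps\max\{\|x\|,\|y\|\}$. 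These are precisely the conclusions asserted in the corollary.

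Since the whole argument is a one-line citation chain feeding into the already-proved Theorem~\ref{theorem:hilbert}, there is no substantive obstacle; the main point is simply to invoke the right external result (\cite[Example~1.5]{DanGarMaeMar}) so that the hypothesis of Theorem~\ref{theorem:hilbert} is met in each of the two cases listed. Accordingly, the written proof can be reduced to two sentences: one identifying the $\BPBp$-for-compact-operators hypothesis as satisfied by $(\F(H),Y)$, and one applying Theorem~\ref{theorem:hilbert}.
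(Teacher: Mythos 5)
Your proposal is correct and is exactly the paper's argument: the paper also derives this corollary by citing \cite[Example 1.5]{DanGarMaeMar} to verify that $(\F(H),Y)$ has the $\BPBp$ for compact operators under either hypothesis (a) or (b), and then applies Theorem~\ref{theorem:hilbert}. Nothing further is needed.
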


As a special case of Theorems \ref{theorem:unifcvx} and \ref{theorem:hilbert}, we may obtain a strengthened result when $X=\R$. In this case, $\F(\R) = L_1(\R)$ by Lemma~\ref{Lemm:elementarypropertiesF(X)}.(d) and then, the pair $(\F(\R),Y)$ has the BPBp for compact operators if and only if $Y$ has a property called $\AHSP$ introduced in \cite[Definition 3.1]{AcoAroGarMae} (see \cite[Remark 2.5]{AcoBerGarKimMae2} for the result). In fact, we may squeeze the ideas developed in Theorems \ref{theorem:Der-dense-RNP} and \ref{theorem:Dkalwaysdense} to get much better results.

\begin{prop}\label{prop:BPBp-Der-RNP-slopes}
Let $Y$ be a Banach space with the Radon-Nikod\'{y}m property. Then, for every $\eps>0$, if $f\in S_{\Lip(\R,Y)}$ and
$(t_1, t_2)\in \widetilde{\R},~ t_1<t_2$ satisfy that
$$
\frac{\|f(t_1)-f(t_2)\|}{|t_1-t_2|}>1-\eps,
$$
 then there exists $g\in S_{\Lip(\R,Y)}$ and $t_0\in [t_1, t_2]$ such that $g$ is differentiable at $t_0$ with $\|g'(t_0)\|=1$ and $\|f-g\|<\eps$.
\end{prop}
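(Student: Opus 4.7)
The plan is to adapt the proof of (i)$\Rightarrow$(ii) of Theorem~\ref{theorem:Der-dense-RNP}: represent $f$ as an integral and renormalize the density $\varphi$ only on a portion of $[t_1,t_2]$, which will force the resulting point of differentiability to lie inside this interval. The new ingredient compared with Theorem~\ref{theorem:Der-dense-RNP} is the elementary observation that the slope hypothesis on $[t_1,t_2]$ forces $\varphi$ to have norm close to $1$ on a subset of $[t_1,t_2]$ of positive measure.

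By Lemma~\ref{lemma:RNP-compact-derivable}(a), write $f(t) = \int_0^t \varphi(s)\,ds$ with $\varphi\in L_\infty(\R,Y)$ and $\|\varphi\|_\infty = 1$; after modifying on a null set, assume $\|\varphi(t)\|\leq 1$ for every $t\in\R$. Pick $\eps'$ strictly between $1 - \frac{\|f(t_2)-f(t_1)\|}{t_2-t_1}$ and $\eps$, which is possible by the slope hypothesis. Then
$$
(1-\eps')(t_2-t_1) < \|f(t_2)-f(t_1)\| \leq \int_{t_1}^{t_2}\|\varphi(s)\|\,ds,
$$
so the set $A := \{t\in[t_1,t_2] \colon \|\varphi(t)\| > 1-\eps'\}$ must have positive Lebesgue measure, since otherwise the integral would be bounded by $(1-\eps')(t_2-t_1)$.

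Now define $\psi\in L_\infty(\R,Y)$ by $\psi(t) = \varphi(t)/\|\varphi(t)\|$ for $t\in A$ and $\psi(t) = \varphi(t)$ for $t\notin A$. Since $\|\psi(t)-\varphi(t)\| = 1-\|\varphi(t)\| < \eps'$ on $A$ and $\psi = \varphi$ outside $A$, one has $\|\psi-\varphi\|_\infty \leq \eps'$; moreover $\|\psi\|_\infty = 1$, because $\|\psi(\cdot)\| = 1$ on the set $A$ of positive measure while $\|\psi\|\leq 1$ everywhere. Set $g(t) := \int_0^t \psi(s)\,ds$. Lemma~\ref{lemma:RNP-compact-derivable}(b) gives that $g\in S_{\Lip(\R,Y)}$ (as $\|g\|=\|\psi\|_\infty=1$) and that $g$ is differentiable almost everywhere with $g' = \psi$ a.e.; applying the same lemma to the Lipschitz map $g-f$, whose density is $\psi-\varphi$, yields $\|g-f\| = \|\psi-\varphi\|_\infty \leq \eps' < \eps$. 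Finally, since the set of $t$ where $g$ is differentiable with $g'(t) = \psi(t)$ has full measure in $\R$, we can choose $t_0 \in A\subset[t_1,t_2]$ in this set; then $\|g'(t_0)\| = \|\psi(t_0)\| = 1 = \|g\|$, completing the proof. The only genuine obstacle beyond Theorem~\ref{theorem:Der-dense-RNP} is the verification that the critical set $A$ lies inside $[t_1,t_2]$ and has positive measure, but this follows immediately from the slope lower bound combined with the integral representation.
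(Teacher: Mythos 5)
Your proposal is correct and follows essentially the same route as the paper: both localize the renormalization argument from the proof of (i)$\Rightarrow$(ii) of Theorem~\ref{theorem:Der-dense-RNP} to the set of points of $[t_1,t_2]$ where the density $\varphi$ (equivalently, $f'$) has norm greater than $1-\eps$, observing that the slope hypothesis together with the integral representation forces this set to have positive measure. Your auxiliary parameter $\eps'<\eps$ is only a harmless refinement guaranteeing the strict inequality $\|f-g\|<\eps$ rather than $\|f-g\|\leq\eps$.
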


\begin{proof}
By Lemma \ref{lemma:RNP-compact-derivable}.(a), $f$ is differentiable a.e.\ and we may write
$$
f(t)=\int_{0}^{t}f'(s)\,ds ~~ \text{for } ~ t\in \R.
$$
By hypothesis,
$$
1-\eps<\frac{\|f(t_1)-f(t_2)\|}{|t_1-t_2|}= \frac{1}{t_2-t_1}\left\|\int_{t_1}^{t_2}f'(s)\,ds\right\| \leq  \frac{1}{t_2-t_1}\int_{t_1}^{t_2}\|f'(s)\|\,ds,
$$
so we get that the set
$$
\widetilde{A}_\eps:=\{t\in[t_1,t_2]\colon f'(t) ~\mbox{exists and}~ \|f'(t)\| >1-\eps\}
$$
has positive measure. Now, we may follow the proof of (i)$\Rightarrow$(ii) of Theorem \ref{theorem:Der-dense-RNP}, applying the set $\widetilde{A}_\eps$ instead of the set $A_\eps$ defined in \eqref{eq:Der-dens-RNP-set-A}, to obtain $g\in \Lip(\R,Y)$ and $t_0\in \widetilde{A}_\eps\subset [t_1, t_2]$ such that $\|f-g\|< \eps$, $g$ is differentiable at $t_0$ and $\|g\|=\|g'(t_0)\|=1$.
\end{proof}

For a Lipschitz compact map, we have a more general result. The proof is an obvious adaptation of the previous one using the ideas of the proof of Theorem \ref{theorem:Dkalwaysdense}.

\begin{prop}\label{prop:BPBpk-Der-all-slopes}
Let $Y$ be a Banach space. Then, for every $\eps>0$, if $f\in S_{\Lipk(\R,Y)}$ and $(t_1, t_2)\in \widetilde{\R}, ~t_1<t_2$ satisfy that
$$
\frac{\|f(t_1)-f(t_2)\|}{|t_1-t_2|}>1-\eps,
$$
then there exist $g\in S_{\Lipk(\R,Y)}$ and $t_0\in [t_1, t_2]$ such that $g$ is differentiable at $t_0$ with $\|g'(t_0)\|=1$ and $\|f-g\|<\eps$.
\end{prop}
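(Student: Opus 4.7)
The plan is to run the proof of Proposition~\ref{prop:BPBp-Der-RNP-slopes} essentially verbatim, but feeding in the compact branch of Lemma~\ref{lemma:RNP-compact-derivable}.(a) (which requires no Radon--Nikod\'ym assumption on $Y$) instead of the RNP branch, and then, at the end, checking that the modified map $g$ is still Lipschitz compact. Concretely, since $f\in \Lipk(\R,Y)$, Lemma~\ref{lemma:RNP-compact-derivable}.(a) gives $\varphi\in L_\infty(\R,Y)$ with $\|\varphi\|_\infty=\|f\|=1$ and $f(t)=\int_0^t\varphi(s)\,ds$, with $\varphi=f'$ a.e.

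The slope hypothesis then yields
$$
1-\eps<\frac{\|f(t_1)-f(t_2)\|}{t_2-t_1}=\frac{1}{t_2-t_1}\left\|\int_{t_1}^{t_2}\varphi(s)\,ds\right\|\leq \frac{1}{t_2-t_1}\int_{t_1}^{t_2}\|\varphi(s)\|\,ds,
$$
so the set $\widetilde{A}_\eps:=\{s\in[t_1,t_2]\colon \varphi(s)\text{ exists and }\|\varphi(s)\|>1-\eps\}$ has positive measure. Just as in the proof of (i)$\Rightarrow$(ii) of Theorem~\ref{theorem:Der-dense-RNP}, I set
$$
\psi(s):=\begin{cases}\varphi(s)/\|\varphi(s)\| & \text{if } s\in \widetilde{A}_\eps,\\ \varphi(s) & \text{otherwise,}\end{cases}
$$
and define $g(t):=\int_0^t \psi(s)\,ds$. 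Then $\|\varphi-\psi\|_\infty\leq \eps$, so by Lemma~\ref{lemma:RNP-compact-derivable}.(b) we have $\|g\|=\|\psi\|_\infty=1$ and $\|f-g\|<\eps$; picking any Lebesgue point $t_0\in \widetilde{A}_\eps\subset[t_1,t_2]$ of $\psi$, the same lemma gives $g'(t_0)=\psi(t_0)$ with $\|g'(t_0)\|=1$.

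The only genuinely new step, and the main (mild) obstacle here, is to verify that $g$ is still \emph{Lipschitz compact}; this is what distinguishes the statement from Proposition~\ref{prop:BPBp-Der-RNP-slopes}. By the `moreover' part of Lemma~\ref{lemma:RNP-compact-derivable}.(b) applied to $f$, the a.e.\ values of $\varphi$ lie in the compact set $K:=\overline{\operatorname{Slope}(f)}\subset Y$. Since $\psi$ either equals $\varphi$ or normalizes it (only when $\|\varphi\|\geq 1-\eps$), the range of $\psi$ sits a.e.\ inside
$$
K\cup\bigl\{y/\|y\|\colon y\in K,\ \|y\|\geq 1-\eps\bigr\},
$$
which is compact, being the union of $K$ with the image of $K\cap\{\|y\|\geq 1-\eps\}$ under the (continuous) normalization map. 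Therefore, by the converse direction of Lemma~\ref{lemma:RNP-compact-derivable}.(b), $g\in\Lipk(\R,Y)$, and all required properties of $g$ have been verified.
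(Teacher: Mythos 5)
Your proposal is correct and takes essentially the same route as the paper, whose proof of this proposition is exactly the sketch you carried out: rerun Proposition~\ref{prop:BPBp-Der-RNP-slopes} using the compact branch of Lemma~\ref{lemma:RNP-compact-derivable}.(a) (no Radon--Nikod\'ym assumption needed), then conclude $g\in\Lipk(\R,Y)$ from Lemma~\ref{lemma:RNP-compact-derivable}.(b) because $\psi$ has relatively compact essential range. Your explicit verification that this range lies in $K\cup\bigl\{y/\|y\|\colon y\in K,\ \|y\|\geq 1-\eps\bigr\}$ simply fills in a detail the paper leaves implicit (tacitly assuming $\eps<1$, as the paper does too).
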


Note that Proposition \ref{prop:BPBpk-Der-all-slopes} shows that it is possible that the pair $(X,Y)$ has the $\LDBPBp$ for Lipschitz compact maps without the condition that the pair $(\F(X),Y)$ has the $\BPBp$ for compact operators. Thus the condition in Theorem \ref{theorem:unifcvx} is sufficient but not necessary. Indeed, it follows from Proposition \ref{prop:BPBpk-Der-all-slopes} that  $(\R,Y)$ has the $\LDBPBp$ for Lipschitz compact maps for all range spaces $Y$, while there are Banach spaces $Y$ for which the pair $(\F(\R),Y)$ fails the $\BPBp$ for compact operators (see \cite[Remark 2.5]{AcoBerGarKimMae2}).

On the other hand, we can easily provide with versions of the above two results in terms of the derivative of $f$.
We just observe that if $f\in \Lip(\R,Y)$ is differentiable at $t_0\in \R$, then given a sufficiently small $\delta>0$, the slope $\frac{1}{\delta}\bigl[{f(t_0+\delta)-f(t_0)}\bigr]$ is close to the value of $f'(t_0)$.

\begin{cor}
Let $Y$ be a Banach space with the Radon-Nikod\'{y}m property. Then, for every $\eps>0$, if $f\in S_{\Lip(\R,Y)}$  and $t\in \R$ satisfy that $\|f'(t)\| > 1-\eps$,
then for every $\delta>0$ there exist $g\in S_{\Lip(\R,Y)}$ and $s\in \R$ such that $g$ is differentiable at $s\in \R$ with $\|g'(s)\|=1$, $\|f-g\|<\eps$ and $\|t-s\|<\eps$.
\end{cor}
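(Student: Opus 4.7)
The plan is to reduce the statement to Proposition~\ref{prop:BPBp-Der-RNP-slopes} using the observation displayed just before its statement: a derivative at a point is well approximated by slopes over short intervals containing that point, so the hypothesis $\|f'(t)\|>1-\eps$ forces the slope on a small interval around $t$ to also exceed $1-\eps$. Once we have such a pair $(t_1,t_2)$ with $t\in[t_1,t_2]$, the previous proposition does essentially all the work and even localizes the returned point $s$ inside $[t_1,t_2]$, which yields the desired control on $|t-s|$.

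Explicitly, given $\eps>0$ and $\delta>0$, I would first choose $\eps'\in(1-\eps,\|f'(t)\|)$, which is possible since $\|f'(t)\|>1-\eps$ (and note $\|f'(t)\|\le\|f\|=1$, so the interval is nonempty). Since
$$
\frac{f(t+h)-f(t)}{h}\xrightarrow[h\to 0]{}f'(t),
$$
there exists $h_0>0$ such that $\tfrac{\|f(t+h)-f(t)\|}{|h|}>\eps'$ whenever $0<|h|<h_0$. Pick any $h$ with $0<h<\min\{h_0,\eps,\delta\}$ and set $(t_1,t_2)=(t,t+h)\in\widetilde{\R}$, so that
$$
\frac{\|f(t_1)-f(t_2)\|}{|t_1-t_2|}>\eps'=1-(1-\eps').
$$

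Now I would apply Proposition~\ref{prop:BPBp-Der-RNP-slopes} with the parameter $\eta:=1-\eps'\in(0,\eps)$ playing the role of its $\eps$. This produces $g\in S_{\Lip(\R,Y)}$ and $s\in[t_1,t_2]=[t,t+h]$ such that $g$ is differentiable at $s$ with $\|g'(s)\|=1$ and $\|f-g\|<\eta<\eps$. Since $s\in[t,t+h]$ and $h<\min\{\eps,\delta\}$, we obtain $|t-s|\le h<\eps$ (and $|t-s|<\delta$ as well, should the $\delta$-version be the intended statement).

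There is no real obstacle here; the argument is essentially bookkeeping of two quantitative losses. The only mild subtlety is to reserve a strictly positive gap between $1-\eps$ and $\|f'(t)\|$ by introducing the auxiliary $\eps'$, so that the loss $\eta=1-\eps'$ fed into Proposition~\ref{prop:BPBp-Der-RNP-slopes} is strictly smaller than $\eps$. This is precisely what guarantees the strict inequality $\|f-g\|<\eps$ at the end.
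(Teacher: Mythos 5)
Your proof is correct and takes exactly the route the paper intends: the paper justifies this corollary only by the remark preceding it (the derivative at a point is approximated by slopes over short intervals containing it) followed by an application of Proposition~\ref{prop:BPBp-Der-RNP-slopes}, which is precisely your reduction, including the localization $s\in[t_1,t_2]$. Your auxiliary $\eps'$ to keep a strict gap, and your simultaneous handling of the $\eps$/$\delta$ bound on $|t-s|$ (the statement's $\delta$ appears to be a typo), just make the paper's implicit bookkeeping explicit.
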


\begin{cor}
Let $Y$ be a Banach space. Then, for every $\eps>0$, if $f\in S_{\Lipk(\R,Y)}$  and $t\in \R$ satisfy that $\|f'(t)\| > 1-\eps$,
then for every $\delta>0$ there exist $g\in S_{\Lipk(\R,Y)}$ and $s\in \R$ such that $g$ is differentiable at $s\in \R$ with $\|g'(s)\|=1$, $\|f-g\|<\eps$ and $\|t-s\|<\eps$.
\end{cor}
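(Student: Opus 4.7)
The plan is to derive this result directly from Proposition \ref{prop:BPBpk-Der-all-slopes} by converting the hypothesis on the derivative into a hypothesis about a slope over a short interval.

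First I would fix $f \in S_{\Lipk(\R,Y)}$, $t \in \R$, and $\eps > 0$ with $\|f'(t)\| > 1-\eps$; here $f'(t)$ denotes the (existing) derivative at the single point $t$. Pick an auxiliary $\eps' > 0$ small enough that $\|f'(t)\| > 1-\eps'$ and $\eps' < \eps$. Using the definition of the derivative at $t$, choose $h \in \R$ with $0 < |h| < \min\{\eps, \delta\}$ small enough to guarantee
\[
\left\|\frac{f(t+h)-f(t)}{h}\right\| > 1-\eps'.
\]
Set $(t_1,t_2)$ to be the ordered pair formed by $t$ and $t+h$, so that $|t_1-t_2| = |h| < \eps$ and
\[
\frac{\|f(t_1)-f(t_2)\|}{|t_1-t_2|} > 1-\eps'.
\]

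Next I would apply Proposition \ref{prop:BPBpk-Der-all-slopes} with the parameter $\eps'$ (in place of $\eps$) to the function $f$ and the pair $(t_1,t_2)$. This produces $g \in S_{\Lipk(\R,Y)}$ and $s \in [t_1,t_2]$ such that $g$ is differentiable at $s$, $\|g'(s)\|=1$, and $\|f-g\| < \eps' < \eps$. Finally, since $s$ lies in the closed interval of length $|h|$ with $t$ as one endpoint, we have $|t-s| \leq |h| < \eps$, and this gives the desired conclusion.

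The only subtlety is the order of choices: we need to pick $h$ small enough \emph{both} so the slope is close to $\|f'(t)\|$ \emph{and} so that $|h| < \eps$, and this is achievable simultaneously because the former requirement only asks that $|h|$ be below some threshold determined by the existence of $f'(t)$. There is no substantive obstacle here; the corollary is a quantitative translation of the single-slope statement of Proposition \ref{prop:BPBpk-Der-all-slopes} into a statement localized near a point of differentiability, exactly as suggested in the paragraph preceding the corollary.
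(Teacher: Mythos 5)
Your proposal is correct and is essentially the paper's own argument: the paper justifies this corollary precisely by the observation stated just before it (that for a point of differentiability the slope $\frac{1}{h}\bigl[f(t+h)-f(t)\bigr]$ over a sufficiently short interval approximates $f'(t)$) combined with Proposition~\ref{prop:BPBpk-Der-all-slopes}, which is exactly your reduction. Your extra care in choosing $\eps'\in(1-\|f'(t)\|,\eps)$ and $|h|<\min\{\eps,\delta\}$ so that $s\in[t_1,t_2]$ lands within $\eps$ (indeed $\delta$) of $t$ is the same quantitative bookkeeping the paper leaves implicit.
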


Finally, to finish the section we present the following corollaries which are straightforward consequences of Theorems \ref{theorem:unifcvx} and \ref{theorem:hilbert} for the case of general Lipschitz maps, when the range space is finite dimensional. We just recall that any finite-dimensional polyhedral Banach space $Y$ has property $\beta$, so $(\F(X),Y)$ has the BPBp for compact operators for for every Banach space $X$.

\begin{cor}\label{cor:unifcvx}
Let $X$ be a uniformly convex Banach space and let $Y$ be a finite-dimensional polyhedral Banach space. Then, $(X,Y)$ has the $\LDBPBp$ for Lipschitz maps.
\end{cor}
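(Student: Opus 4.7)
The plan is to reduce this corollary directly to Theorem~\ref{theorem:unifcvx} by exploiting two features of the hypothesis: the finite dimensionality of $Y$ collapses the distinction between general Lipschitz maps and Lipschitz compact maps, and the polyhedral structure forces property $\beta$, which supplies the required $\BPBp$ for compact operators on the Lipschitz-free side.

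First I would observe the identification $\Lip(X,Y)=\Lipk(X,Y)$ whenever $Y$ is finite-dimensional: indeed, for any $f\in\Lip(X,Y)$ the set $\operatorname{Slope}(f)$ is bounded (by $\|f\|$) in the finite-dimensional space $Y$, hence relatively compact. This was already noted in the paragraph introducing $\Lipk(X,Y)$, so no extra work is needed. Consequently the $\LDBPBp$ for Lipschitz compact maps and the $\LDBPBp$ for Lipschitz maps coincide for such a pair $(X,Y)$.

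Next I would invoke that a finite-dimensional polyhedral Banach space $Y$ has property~$\beta$ (this is the classical Lindenstrauss example recalled in the introduction). By \cite[Example 1.5]{DanGarMaeMar}, which is cited just before Corollary~\ref{cor:universalrange}, property~$\beta$ of the range space guarantees that $(Z,Y)$ has the $\BPBp$ for compact operators for any Banach space~$Z$. Applying this with $Z=\F(X)$, the pair $(\F(X),Y)$ has the $\BPBp$ for compact operators.

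Finally, since $X$ is uniformly convex and $(\F(X),Y)$ has the $\BPBp$ for compact operators, Theorem~\ref{theorem:unifcvx} yields that $(X,Y)$ has the $\LDBPBp$ for Lipschitz compact maps. Combined with the identification $\Lip(X,Y)=\Lipk(X,Y)$ from the first step, this gives the $\LDBPBp$ for Lipschitz maps, as desired. There is no real obstacle here; the statement is essentially a packaging of Theorem~\ref{theorem:unifcvx}, Corollary~\ref{cor:universalrange}(a), and the finite-dimensional reduction, the only point worth stating cleanly being the first observation that compactness of the slope set is automatic.
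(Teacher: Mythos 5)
Your proposal is correct and follows essentially the same route as the paper, which likewise obtains Corollary~\ref{cor:unifcvx} from Theorem~\ref{theorem:unifcvx} by recalling that a finite-dimensional polyhedral space has property $\beta$, so that $(\F(X),Y)$ has the $\BPBp$ for compact operators by \cite[Example 1.5]{DanGarMaeMar}, together with the observation from the introduction that every Lipschitz map into a finite-dimensional space is Lipschitz compact, making the two versions of the $\LDBPBp$ coincide. There is nothing to add; your explicit remark that $\operatorname{Slope}(f)$ is bounded, hence relatively compact in finite dimensions, is exactly the point the paper leaves implicit.
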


\begin{cor}\label{cor:hilbert}
Let $H$ be a Hilbert space and let $Y$ be a finite-dimensional polyhedral Banach space. Then, for every $\eps>0$, there exists $\eta >0$ such that whenever $f \in S_{\Lip(X,Y)}$ and $(x,y) \in \widetilde{X}$ satisfy
$$
\frac{\| f(x)-f(y) \|}{\| x-y \| } > 1 - \eta,
$$
there exist $g \in S_{\Lip(X,Y)}$, $z \in S_Y$ and $v \in X$ such that $g$ attains its norm locally directionally at the point $\bar{x}$ in the direction $\frac{x-y}{\| x-y \|}$ toward $z$, $\| g -f \| < \eps$ and $\dist (\bar{x}, [x,y] ) < \eps\max\{\|x\|,\|y\|\}$.
\end{cor}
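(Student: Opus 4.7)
My plan is to derive the statement directly from Theorem~\ref{theorem:hilbert}, whose hypotheses I need only to verify in this particular setting and then lift from Lipschitz compact maps to general Lipschitz maps.

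The first ingredient is the observation that any finite-dimensional polyhedral Banach space $Y$ has property $\beta$: take $\{z_i\}$ to be the finitely many vertices of $B_Y$ and $\{z_i^*\}$ to be the corresponding norming functionals (each $z_i^*$ peaking at $z_i$). Conditions (1) and (3) in the definition of property $\beta$ are immediate from the polyhedral structure, and condition (2) holds with some $\lambda<1$ because the system is finite and no two distinct $z_i$ are aligned with the same functional. This is a standard observation but deserves to be stated for the record.

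The second ingredient, already recalled in the paragraph preceding the corollary, is that when $Y$ has property $\beta$ the pair $(E,Y)$ enjoys the $\BPBp$ for compact operators for every Banach space $E$ (see \cite[Example~1.5]{DanGarMaeMar}). Applying this with $E=\F(H)$, all the hypotheses of Theorem~\ref{theorem:hilbert} are fulfilled, and therefore Theorem~\ref{theorem:hilbert} supplies the desired quantitative conclusion for Lipschitz compact maps $f\in S_{\Lipk(H,Y)}$ with the same constant $\eta$.

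Finally, since $Y$ is finite-dimensional, for every $f\in\Lip(H,Y)$ the set $\operatorname{Slope}(f)$ is bounded and therefore relatively compact in $Y$, so $f\in\Lipk(H,Y)$; consequently $\Lipk(H,Y)=\Lip(H,Y)$ isometrically. Thus the map $g$ produced by Theorem~\ref{theorem:hilbert} automatically belongs to $S_{\Lip(H,Y)}$, and the conclusion of that theorem transfers verbatim to general Lipschitz maps, yielding the corollary. There is no genuine obstacle in this argument, the only conceptual check being that the two simplifications — property $\beta$ forcing the operator $\BPBp$, and finite-dimensional range collapsing $\Lipk$ onto $\Lip$ — are valid simultaneously, which is clear.
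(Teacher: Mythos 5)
Your overall route is exactly the paper's: the corollary is stated there as a straightforward consequence of Theorem~\ref{theorem:hilbert}, obtained by recalling that a finite-dimensional polyhedral space has property $\beta$, invoking \cite[Example 1.5]{DanGarMaeMar} to get the $\BPBp$ for compact operators for the pair $(\F(H),Y)$, and then using that $\Lipk(H,Y)=\Lip(H,Y)$ when $\dim Y<\infty$ (the set $\operatorname{Slope}(f)$ is bounded, hence relatively compact). Your reduction and your lifting step are both correct and identical in spirit to the paper's.

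However, the one supporting detail you actually spell out --- the construction witnessing property $\beta$ --- is wrong as stated. Taking $\{z_i\}$ to be the vertices of $B_Y$ and $z_i^*$ a functional peaking at $z_i$ fails the norming condition (3). Concretely, in $Y=\ell_\infty^2$ take one vertex from each antipodal pair, say $(1,1)$ and $(1,-1)$; any functional peaking at a vertex of the square has both coordinates of absolute value strictly less than $1$ (e.g.\ $\tfrac12(1,1)$ and $\tfrac12(1,-1)$), so $\sup_i|z_i^*(z)|<1$ at $z=(1,0)$, whatever peaking functionals you choose. The correct system is the dual one: let $\{z_i^*\}$ be (one from each antipodal pair of) the finitely many extreme points of $B_{Y^*}$, which do norm $Y$ because a polyhedral norm satisfies $\|z\|=\max_i|z_i^*(z)|$, and pick $z_i$ in the relative interior of the facet $\{z\in B_Y\colon z_i^*(z)=1\}$; since the relative interior of a facet meets no other facet, $|z_j^*(z_i)|<1$ for $j\neq i$, and finiteness gives a uniform $\lambda<1$. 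Since the fact itself is classical (it goes back to Lindenstrauss \cite{Lin} and is recalled without proof in the paper's introduction), this is a local repair rather than a structural gap: with the construction corrected, your proof is complete and coincides with the paper's.
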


\vspace*{0.5cm}

\noindent \textbf{Acknowledgment:\ } The authors are grateful to Rafael Chiclana, Gilles Godefroy, Gilles Lancien, and Abraham Rueda for kindly answering several inquires regarding the content of the paper.

\end{document}